\newtheorem{theorem}{Theorem}[section]
\newtheorem{corollary}[theorem]{Corollary}
\newtheorem{lemma}[theorem]{Lemma}
\newtheorem{proposition}[theorem]{Proposition}
\theoremstyle{definition}
\newtheorem{definition}[theorem]{Definition}
\newtheorem{remark}[theorem]{Remark}
\newcommand{\crcle}{\mathbb{S}^1}  						
\newcommand{\sphere}{\mathbb{S}^2}  						
\newcommand{\Sphere}{\mathbb{S}^3}  						
\newcommand{\bbb}[1]{\mathbb{B}_{#1}}						
\newcommand{\intvl}{\mathrm{I}}              		
\DeclareMathOperator*{\nhd}{\mathcal{N}} 				
\DeclareMathOperator*{\Int}{int}  							
\DeclareMathOperator{\ms}{MS}    								
\newcommand{\kc}[1]{\mathcal{K}(#1)}    				
\DeclareMathOperator*{\esd}{Esd} 								
\DeclareMathOperator*{\dist}{d}   							
\DeclareMathOperator{\V}{V}   									
\DeclareMathOperator{\E}{E}   									
\newcommand{\graph}[2]{G^{\mathcal{#1}}(#2)}		
\begin{document}
\title{Minimal genus Seifert surfaces for alternating links}
\author{Jessica E. Banks}
\date{}
\maketitle

\begin{abstract}
We give a complete proof of results announced by Hirasawa and Sakuma describing explicitly the Kakimizu complex of a non-split, prime, special alternating link.
\end{abstract}

\tableofcontents


\section{Introduction}

The Kakimizu complex of a link records the structure of the set of minimal genus Seifert surfaces for the link. For a non-split, alternating link, the definition is analogous to that of the complex of curves of a compact, orientable surface.

\begin{definition}[\cite{MR1177053} p225]
For a non-split, alternating link $L$, the \textit{Kakimizu complex} $\ms(L)$ of $L$ is a simplicial complex, the vertices of which are the ambient isotopy classes of minimal genus Seifert surfaces for $L$. Distinct vertices $R_0,\ldots,R_n$ span an $n$--simplex exactly when they can be realised disjointly.
\end{definition}

The Kakimizu complex can be defined for links in general, but not using this form of the definition.
It has been calculated explicitly in certain simple cases (see, for example, \cite{MR2131376}). However, doing so in general is complicated by the difficulty of controlling the multiple surfaces that form the vertices of a simplex. 
One way in which we might hope to do so is by expressing these surfaces in terms of a diagram for $L$. With this in mind, alternating links are a natural class to study, as applying Seifert's algorithm to any alternating diagram gives a minimal genus Seifert surface (\cite{MR860665} Theorem 4). In particular, the subclass of special alternating links is very well behaved.

\begin{definition}
A \textit{special} link diagram $D$ is one in which every Seifert circle is innermost in $\sphere$. This determines a division of the regions of $D$ into \textit{black regions} (those which make up the Seifert surface given by Seifert's algorithm) and \textit{white regions}. Thus the surface given by applying Seifert's algorithm to $D$ is always uniquely defined.
\end{definition}

In \cite{MR1664976} Hirasawa and Sakuma give a complete description of $\ms(L)$ for a prime link $L$ with a reduced, special alternating diagram $D$ via the following result. 
The definition of the complex $\kc{D}$ is given in Section \ref{hssection}.

\begin{theorem}[\cite{MR1664976} Theorems 1.5, 1.1]\label{isomorphismtheorem}
There is a natural isomorphism between $\ms(L)$ and $\kc{D}$. In particular, every minimal genus Seifert surface for $L$ is given by applying Seifert's algorithm to some special alternating diagram for $L$.
\end{theorem}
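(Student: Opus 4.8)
The strategy is to build the isomorphism between $\ms(L)$ and $\kc{D}$ in two directions and check they are mutually inverse, with the bulk of the work going into the construction from minimal genus Seifert surfaces to the combinatorial data of $\kc{D}$. First I would recall the precise definition of $\kc{D}$ from Section~\ref{hssection}: its vertices should correspond to certain state-like data on the diagram $D$ (essentially, ways of assigning the crossings consistently so that Seifert's algorithm on the resulting alternating diagram recovers a minimal genus surface), and its simplices should encode when a collection of such surfaces can be made simultaneously disjoint. The forward map is the easy half: given a vertex of $\kc{D}$, one produces a special alternating diagram for $L$ by the prescribed crossing changes, applies Seifert's algorithm, and invokes the cited result of Murasugi--Crowell (\cite{MR860665} Theorem~4) that this yields a minimal genus Seifert surface; one then checks this is well-defined up to ambient isotopy and that disjointness of the combinatorial data gives disjointness of the surfaces, so that simplices map to simplices.

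The hard direction is to show every minimal genus Seifert surface $R$ for $L$ arises this way, i.e.\ that the forward map is surjective on vertices (and then injective). Here I would start with an arbitrary minimal genus Seifert surface $R$ and study how it sits relative to the special alternating diagram $D$, presumably by putting $R$ in a normal form with respect to the checkerboard surfaces (the black surface $B$ from Seifert's algorithm and the white surface $W$) or with respect to the bipartite Seifert-circle structure of a special diagram. Using an innermost-disc / outermost-arc exchange argument on the curves of intersection $R\cap B$ and $R\cap W$, one shows these intersections can be simplified, and that the resulting decomposition of $R$ into pieces lying in the complementary handlebody-like regions forces $R$ to be isotopic to the Seifert surface of some alternating diagram obtained from $D$ by crossing changes allowed by $\kc{D}$. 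Primeness of $L$ and the hypothesis that $D$ is reduced special alternating are used to rule out degenerate configurations and to guarantee the relevant regions behave like product regions.

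Once surjectivity on vertices is in hand, injectivity amounts to showing that two combinatorial vertices whose surfaces are ambient isotopic must be equal; this should follow from the uniqueness built into the special diagram (distinct crossing assignments give genuinely different surfaces, detected e.g.\ by their complement or by how they meet $B$ and $W$). Finally, to get the isomorphism of complexes rather than just a vertex bijection, I would show that a set of minimal genus Seifert surfaces can be realised disjointly if and only if the corresponding vertices of $\kc{D}$ span a simplex; the ``only if'' direction is the subtle one and is handled by applying the normal-form/exchange machinery simultaneously to the whole family, using that disjoint surfaces can be isotoped to stay disjoint throughout the simplification. The main obstacle is the surjectivity step: controlling an a priori arbitrary minimal genus surface and proving it must come from a diagram is exactly the difficulty flagged in the introduction, and it is where the special-alternating hypothesis must be exploited most heavily.
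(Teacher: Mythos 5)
There is a genuine gap, in two places. First, your surjectivity step proposes to put an arbitrary minimal genus Seifert surface $R'$ into a normal form directly against the checkerboard surfaces of $D$ and read off a vertex of $\kc{D}$. The paper never does this, and it is not clear it can be done: the special-form machinery of Section \ref{specialsection} (the $P$--arcs) is only available for surfaces that are already \emph{disjoint from} the Seifert-algorithm surface $R$, since it is disjointness from $R$ that forces $R'\cap\sphere$ into the white regions with exactly one arc endpoint at each edge midpoint. The paper handles an arbitrary surface only indirectly: Proposition \ref{admissibleprop1} shows a surface disjoint from $R$ has admissible special form, and the global statement is obtained not by a direct normal-form argument but by showing that $\mathcal{A}\colon\kc{D}\to\ms(L)$ is a local isomorphism at every vertex, hence a covering map, and then invoking simple connectivity of $\ms(L)$ (Theorem \ref{contractiblethm}, Przytycki--Schultens) to conclude it is an isomorphism; already the ``in particular'' clause uses connectedness of $\ms(L)$ to propagate from surfaces disjoint from $R$ to all surfaces. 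Your outline never uses contractibility or simple connectivity of $\ms(L)$, and without that local-to-global mechanism neither your global surjectivity nor the isomorphism statement follows; nor does your direct approach have any tool to control the intersection of an arbitrary minimal genus surface with the projection sphere.

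Second, you treat injectivity and the ``only if'' half of the simplex condition as nearly automatic (``detected e.g.\ by their complement''), whereas this is where most of the paper's work lies. Proposition \ref{disjointnessprop1} shows adjacent vertices of $\kc{D}$ map to distinct surfaces at distance exactly $1$ by proving that neither complementary piece between the two surfaces is a product sutured manifold, using product-disc decompositions and the fibredness criterion for $G(D)$ (Theorem \ref{fibredgraphthm}); Proposition \ref{disjointnessprop2} shows vertices at distance $2$ map to surfaces that cannot be made disjoint, via a delicate explicit simultaneous positioning of the two surfaces and an analysis ruling out product regions (Proposition \ref{productregionsprop}). Moreover, instead of simplifying a whole family of disjoint surfaces simultaneously as you suggest, the paper reduces everything to vertices and edges by proving both complexes are flag (Lemma \ref{flaglemma} and the quoted result of Scharlemann--Thompson). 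So while the skeleton of your plan (forward map via flypes plus Seifert's algorithm, then bijectivity and simpliciality) matches the paper's, the two mechanisms that make it work --- the special form for surfaces disjoint from $R$ combined with the covering-map/contractibility argument, and the sutured-manifold arguments showing $\mathcal{A}$ does not decrease distances --- are absent from your proposal.
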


From this they deduce contractibility of $\ms(L)$ for such links $L$, through the following.

\begin{theorem}[\cite{MR1664976} Theorem 1.6]
If $L$ is a prime link with a special alternating diagram $D$ then $\ms(L)$ is homeomorphic to a disc, the dimension of which can be calculated from $D$.
\end{theorem}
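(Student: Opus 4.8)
The plan is to reduce the statement to a purely combinatorial fact about the complex $\kc{D}$ and then analyse that complex directly. By Theorem~\ref{isomorphismtheorem} there is a simplicial isomorphism $\ms(L)\cong\kc{D}$, so it is enough to prove that the geometric realisation $|\kc{D}|$ is homeomorphic to a closed ball (a disc) and to pin down its dimension in terms of $D$. From this point on Seifert surfaces play no further role: everything is extracted from the definition of $\kc{D}$ recalled in Section~\ref{hssection}, together with the hypotheses that $D$ is reduced and special alternating and $L$ is prime.

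The heart of the argument should be a product decomposition of $\kc{D}$ along the natural block structure of $D$ --- equivalently, of the relevant checkerboard graph $\graph{B}{D}$ (or $\graph{W}{D}$). Writing $D$ as assembled by plumbing / Murasugi sum from pieces $D_1,\dots,D_r$ corresponding to the maximal subgraphs without a cut vertex (a decomposition made unambiguous by reducedness of $D$, with tree-like gluing pattern because $L$ is prime), I would first show that the compatibility relation defining the simplices of $\kc{D}$ is \emph{local} to these pieces: a family of vertices of $\kc{D}$ spans a simplex exactly when, for every $i$, the restricted family in $D_i$ does. Granting this, $\kc{D}$ is identified with the product $K_1\times\cdots\times K_r$ of the factor complexes attached to the pieces, and it then remains only to understand a single $K_i$.

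Next I would enumerate the possible pieces $D_i$ and verify in each case that $K_i$ is a single simplex $\Delta^{d_i}$: its vertices are the finitely many ``positions'' the surface can occupy inside that piece, and the point to check is that any collection of them --- indeed all of them --- are mutually compatible. With both structural facts in hand, $|\kc{D}|$ is homeomorphic to the polytope $\Delta^{d_1}\times\cdots\times\Delta^{d_r}$, hence to a closed ball of dimension $d_1+\cdots+d_r$; and this number is manifestly computable from $D$, being a sum of local contributions (one per piece, each fixed by the crossing combinatorics of that piece). That completes the proof modulo the two structural claims, using only the standard fact that a product of simplices is a ball of the expected dimension.

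The main obstacle is the locality/product claim. One direction --- that incompatibility inside some piece forces global incompatibility --- should be fairly direct from the definitions; the real work is the converse, showing that position choices made independently in the various pieces always assemble into a single globally valid configuration, with no hidden constraint propagating across a cut vertex. I expect this to come from a careful analysis of how the plumbing structure of $D$ along its cut vertices interacts with the definition of $\kc{D}$, with primeness used to rule out degenerate gluings. By comparison, the per-piece identification $K_i\cong\Delta^{d_i}$ is a bounded case check once the pieces are classified, and the concluding topological step is routine.
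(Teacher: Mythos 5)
Your reduction to $\kc{D}$ via Theorem~\ref{isomorphismtheorem} is indeed the intended first step, but both structural claims that carry the rest of your argument fail. First, the decomposition: the relevant splitting is not along blocks (cut vertices) of a checkerboard graph of $D$. The paper's lemma in Section~\ref{disjointnesssection} notes that for a reduced diagram of a prime link $G(D)$ has \emph{no} cut vertices or loops, so your proposed block decomposition is trivial -- one piece -- and yields nothing. The decomposition that actually works is along the $\theta$--graphs of $\theta(D)$ (the parallel-edge classes of $\graph{F}{D}$), since $\kc{D}$ depends only on $\theta(D)$; and even there, identifying $\kc{D}$ with the product of the single-$\theta$--graph complexes requires care, because a product of simplicial complexes is not naively simplicial -- the paper breaks the cyclic orderings at a chosen region to get \emph{ordered} simplicial complexes and then invokes Theorem~\ref{productcomplexthm}.

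Second, and fatally, the per-piece claim is false: the factor attached to a single $\theta$--graph is \emph{not} a simplex, and its vertices are not mutually compatible. For a $\theta$--graph with $n+1$ edges and total weight $m$, the complex is the edgewise subdivision $\esd_m(\Delta^n)$ (Proposition~\ref{onethetagraphprop}); e.g.\ for two edges and weight $m\geq 2$ it is a path with $m+1$ vertices, in which the weight vectors $(m,0)$ and $(0,m)$ are at distance $m$ -- the corresponding Seifert surfaces genuinely cannot be made disjoint, which is exactly the content of Proposition~\ref{disjointnessprop2} (distances are not collapsed under $\mathcal{A}$). The paper's own example $\kc{D_\alpha}$ (Figure~\ref{hsdefnspic4}) is $3$--dimensional with a vertex lying in four top-dimensional simplices, so it cannot be a simplex. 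This also wrecks your dimension count: if each factor were a simplex on all its ``positions'' the dimension would be (number of vertices)$-1$, growing with the crossing weights, whereas the correct dimension per $\theta$--graph is (number of edges)$-1$, independent of the weights, giving $\sum(n^{\theta}(e^{\theta})-1)$ in total. So the missing idea is precisely the identification of each factor with an edgewise subdivision of a simplex, plus the ordered-product machinery to assemble the factors; without it the plan does not go through.
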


\noindent Only `the idea of the proof' of these theorems is included in \cite{MR1664976}. In \cite{MR2531146} it says the following.
\begin{quotation}
\textit{[Sakuma]} proves the contractibility of $\ms(K)$ when $K$ is a special arborescent link. In his joint paper with Hirasawa \textit{[\cite{MR1664976}]}, contractibility when $K$ is a prime, special, alternating link is announced. Together, this partially verifies a challenging conjecture of Kakimizu's, asserting among other things that $\ms(K)$ is always contractible.
\end{quotation}

The question of the contractibility of the Kakimizu complex for a general link has been answered by the following theorem of Przytycki and Schultens.

\begin{theorem}[\cite{2010arXiv1004.4168P} Theorem 1.1]\label{contractiblethm}
The Kakimizu complex of a link is contractible.
\end{theorem}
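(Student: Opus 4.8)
The plan is to pass to the infinite cyclic cover of the link exterior, where a minimal genus Seifert surface becomes a linearly ordered family of disjoint separating surfaces, and then to build an explicit coherent ``combing'' of the complex towards a fixed basepoint. Write $M=\Sphere\setminus\nhd(L)$ for the link exterior and $\widetilde M\to M$ for the infinite cyclic cover, with deck generator $t$. After choosing a coherent coorientation, a minimal genus Seifert surface $R$ lifts to a properly embedded separating surface $\widetilde R\subset\widetilde M$ whose translates $\{t^k\widetilde R\}_{k\in\mathbb{Z}}$ are disjoint and linearly ordered, and Kakimizu's analysis shows that $\dist(R,S)$ is computed from how the translates of $\widetilde R$ and of $\widetilde S$ interleave once the surfaces are put into minimal position; in particular $\dist(R,S)\le 1$ exactly when $\widetilde S$ can be isotoped to lie between two consecutive translates of $\widetilde R$. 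The first step is to assemble the standard machinery: that any finite collection of minimal genus Seifert surfaces can be simultaneously put into minimal position, and that in minimal position the complementary regions of $\widetilde M$ between consecutive surfaces are product-like enough (via sutured-manifold theory applied to $M$ cut along a taut surface) to support the isotopies used below. I would dispose of split links first, so that from now on $L$ is non-split.

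The core of the argument is then the following. Fix a basepoint vertex $R_0$, and let $S$ be a vertex with $\dist(S,R_0)=n\ge1$. Putting $\widetilde S$ and the translates of $\widetilde{R_0}$ into minimal position, the set of translates of $\widetilde{R_0}$ met by $\widetilde S$ is a canonically determined block of consecutive levels; pushing $\widetilde S$ across the outermost such level --- equivalently, performing the oriented cut-and-paste of $S$ along $R_0$ on that side --- produces a minimal genus Seifert surface $S'$ with $\dist(S,S')\le 1$ and $\dist(S',R_0)=n-1$. Iterating yields a canonical edge-path geodesic $\gamma_S$ from $S$ to $R_0$. The key point to establish is \emph{coherence}: if $S$ and $S'$ span a simplex of $\ms(L)$, then the surfaces produced from them at each stage again span a simplex (or coincide), so the geodesics $\gamma_S$ fellow-travel and, more precisely, glue together over the simplices of $\ms(L)$ to give a continuous homotopy $H\colon\ms(L)\times[0,1]\to\ms(L)$ with $H_0=\mathrm{id}$ and $H_1\equiv R_0$. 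Contractibility of $\ms(L)$ is then immediate.

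I expect essentially all of the difficulty to sit in this coherence statement together with the well-definedness of the ``push across the outermost level'' move. One has to show that the intermediate surface $S'$ really is of minimal genus and really is one step closer to $R_0$; that the choice of which level to cross is intrinsic, so that the relevant nearest-point projection in the Kakimizu metric is single-valued and the construction is independent of the chosen representatives and of how minimal position was achieved; and that the move is compatible with several surfaces being simultaneously in minimal position, so that it behaves well on whole simplices rather than just on vertices. All of this reduces to a close study of the product-like regions between stacked minimal genus Seifert surfaces in $\widetilde M$ and to the uniqueness features of minimal position, which is where the real work lies. (One could instead hope to equip $\ms(L)$ with a combinatorial nonpositive-curvature structure and invoke a Cartan--Hadamard-type statement, but I do not see such a structure available directly, and it would in any case require the same cover-theoretic input, so I would pursue the combing.)
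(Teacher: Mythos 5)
You should first note that this paper does not prove the statement at all: Theorem \ref{contractiblethm} is imported verbatim from Przytycki--Schultens \cite{2010arXiv1004.4168P} and is used as an input elsewhere, so there is no proof in the present paper to compare against. Measured against the actual published argument, your outline is on the right track and is essentially the same strategy: work in the infinite cyclic cover, use Kakimizu's characterisation of the distance in terms of how the lift of one surface interleaves with the translates of the lift of another, define a canonical ``push towards the base vertex'' by an oriented cut-and-paste across the outermost translate, and show this is coherent enough to assemble into a deformation retraction.

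The problem is that, as written, this is a plan rather than a proof: every substantive claim is deferred in your final paragraph. Concretely, you do not prove (i) that the cut-and-paste surface $S'$ is again a minimal genus Seifert surface (this needs a tautness-preservation argument for the oriented double curve sum, it is not a formality); (ii) that $\dist(S',R_0)=\dist(S,R_0)-1$ and $\dist(S,S')\leq 1$, which requires the distance formula in the infinite cyclic cover --- itself a theorem of Kakimizu, proved for knots and needing the non-split hypothesis and additional care for links; (iii) that the move is independent of the choice of representatives and of how minimal position was achieved, i.e.\ that your projection is single-valued on vertices; and (iv) the coherence statement: the vertexwise move need not be simplicial on $\ms(L)$, and turning the family of geodesics $\gamma_S$ into a continuous homotopy $H$ is exactly the delicate part of Przytycki--Schultens' paper (they build the retraction of the ball of radius $n$ onto the ball of radius $n-1$ by a careful cell-by-cell construction, not by simply gluing geodesics). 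Since (i)--(iv) constitute essentially all of the content of the theorem, the proposal as it stands has a genuine gap; identifying the correct strategy is not the same as carrying it out.
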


\begin{definition}
A metric is defined on the vertices of $\ms(L)$. The distance between two vertices is the distance in the 1--skeleton of $\ms(L)$ when every edge has length 1.
\end{definition}

Here we provide a full proof of the results of \cite{MR1664976}, with a view to determining $\ms(L)$ for a general alternating link $L$. Theorem \ref{contractiblethm} is needed in this proof.

This paper is organised as follows. In Section \ref{hssection} we recall the necessary definitions from \cite{MR1664976}. In Section \ref{specialsection} we define a method for describing minimal genus Seifert surfaces relative to an alternating diagram. Section \ref{inductionsection} addresses Theorem 1.1 of \cite{MR1664976}, and Section \ref{disjointnesssection} extends this to a proof of \cite{MR1664976} Theorem 1.5. Finally, Theorems 1.2, 1.6 of \cite{MR1664976} are proved in Sections \ref{nonspecialsection}, \ref{productssection} respectively.

I wish to thank Marc Lackenby for helpful conversations and encouragement.
I am also grateful to Richard Webb and Jacob Rasmussen, through whom I learned of \cite{MR1664976}.

\section{The paper of Hirasawa--Sakuma}\label{hssection}

There are a number of definitions we will need from \cite{MR1664976}. We will change some of the notation and terminology.
Throughout this paper we will only consider non-split, oriented links.

\bigskip

Let $L$ be a prime link with a reduced, special alternating diagram $D$. Consider the planar graph $\mathcal{G}$ that has a vertex in each black region and an edge through each crossing of $D$.
Note that $\mathcal{G}$ contains no loops. 
It may be that distinct edges $e,e'\in\E(\mathcal{G})$ bound a bigon region of $\sphere\setminus\mathcal{G}$. If this occurs, remove one of $e,e'$. Repeat this until $\sphere\setminus\mathcal{G}$ has no bigon regions.

Suppose there exists a simple closed curve $\rho$ in $\sphere$ such that $\rho$ consists of an edge $e$ of $\mathcal{G}$ together with an arc $\rho'$ that only meets $\mathcal{G}$ at its endpoints and such that $\sphere\setminus(\mathcal{G}\cup \rho')$ has no bigon regions. Add a new edge to $\mathcal{G}$ along $\rho'$. Repeat this as many times as possible. Since any two such arcs $\rho'$ can be made disjoint on their interiors, the result of this process is well-defined given $D$.

\begin{definition}
Define $\graph{F}{D}$ to be the graph that results from this process. 
\end{definition}

Figure \ref{hsdefnspic1} shows a digram $D_{\alpha}$ of a link $L_{\alpha}$ together with the graph $\graph{F}{D_{\alpha}}$.
\begin{figure}[htbp]
\centering
\input{pictexfiles/hsdefnspic1}
\caption{\label{hsdefnspic1}}
\end{figure}

\begin{definition}
A \textit{flype circle} $\phi$ is a simple closed curve in $\sphere$ that meets the link diagram $D$ as shown in Figure \ref{hsdefnspic2}a, where the tangles $A$ and $B$ each contain at least one crossing. The flype circle $\phi$ determines a flype that changes $D$ to another special alternating diagram of $L$, as shown in Figure \ref{hsdefnspic2}b.  This change is realised by an isotopy of $\Sphere$.
\begin{figure}[htbp]
\centering
(a)
\input{pictexfiles/hsdefnspic2a}
(b)
\input{pictexfiles/hsdefnspic2b}
\caption{\label{hsdefnspic2}}
\end{figure}
\end{definition}

Let $\phi$ be a flype circle in $D$. 
Let $D_{\phi}$ be the diagram given by performing the flype defined by $\phi$ on $D$. Let $R,R_{\phi}$ be the surfaces given by applying Seifert's algorithm to $D,D_{\phi}$ respectively. Finally, let $R'$ be the preimage of $R_{\phi}$ under the flype.

\begin{definition}
The flype circle $\phi$ is called \textit{inessential} if $R'$ is ambient isotopic to $R$, and \textit{essential} otherwise.
\end{definition}

If $\phi$ lies in the white regions of the diagram where it meets the crossing of $D$ then $\phi$ is inessential. From now on we will generally ignore all such flype circles. That is, we assume that $\phi$ lies on the surface $R$ where it meets the crossing.

\begin{definition}
We call the crossing through which $\phi$ passes the \textit{flype crossing}, and the arc of $\phi$ disjoint from $R$ the \textit{flype arc}. 
\end{definition}

White bigons in a special alternating diagram signify plumbed on Hopf bands in the Seifert surface given by Seifert's algorithm (see Lemma \ref{hopfbandlemma}). Because the Hopf link is fibred, a flype that interchanges the two crossings of a white bigon is inessential. Thus $\phi$ is inessential if the flype crossing and flype arc of $\phi$ differ only by a line of white bigons. We will see later that the converse is also true.

If $\phi$ is essential, the flype crossing and flype arc correspond to distinct edges $e,e'$ of $\graph{F}{D}$ with the same endpoints. We will also refer to the simple closed curve $e\cup e'$ as the flype circle $\phi$, provided we retain knowledge of which edge is the flype crossing and which the flype arc. Note that distinct flype circles $\phi$ and $\phi'$ in $D$ may give the same flype circle in $\graph{F}{D}$. In this case, the flype crossings of $\phi$ and $\phi'$ differ by at most a line of white bigons, as do the flype arcs. Hence the diagrams $D_{\phi}$ and $D_{\phi'}$ are the same.

\begin{definition}
Call such flype circles \textit{equivalent}.
\end{definition}

\begin{definition}
Let $v,v'\in\V(\graph{F}{D})$ such that $v,v'$ are joined by at least two edges of $\graph{F}{D}$. Then the subgraph of $\graph{F}{D}$ consisting of $v,v'$ and all edges joining them
is called a \textit{$\theta$--graph}. For an edge $e$ of a $\theta$--graph, denote by $e^{\theta}$ the $\theta$--graph containing $e$.
Define $\theta(D)$ to be the subgraph of $\graph{F}{D}$ that is the union of all $\theta$--graphs in $\graph{F}{D}$. 
\end{definition}

\begin{definition}
Each vertex of $\graph{F}{D}$ inherits an orientation (clockwise or anticlockwise) from the Seifert circle it lies inside. This orientation extends to a transverse orientation of each edge of $\graph{F}{D}$. We define the \textit{positive} and \textit{negative sides} of each edge, such that the normal points from the negative side to the positive side. See Figure \ref{hsdefnspic3}. 
We similarly define the \textit{positive} and \textit{negative sides} of each crossing in $D$.
\begin{figure}[htbp]
\centering
\input{pictexfiles/hsdefnspic3}
\caption{\label{hsdefnspic3}}
\end{figure}
\end{definition}

\begin{definition}
Given an essential flype circle $\phi$ in $D$, define the \textit{positive side of $\phi$} to be the component of $\sphere\setminus\phi$ that meets the positive side of the flype crossing of $\phi$. The other component is called the \textit{negative side}.
\end{definition}

Each edge $e\in\E(\theta(D))$ inherits a weight $w(e)\in\mathbb{N}\cup\{0\}$, given by the number of crossings in $D$ that correspond to $e$. Define $w^{\theta}(e^{\theta})=\sum_{e'\in e^{\theta}}{w(e')}$.
Number the edges of $\theta(D)$ as $e_1,\ldots,e_n$.
These weights are used to define a simplicial complex $\kc{D}$. First, if $n\geq 1$, set 
\[
\begin{split}
\V(\kc{D})=\Bigg\{(w_1,\ldots,w_n)\in \mathbb{Z}^n {}:{} &  w_i\geq 0 \textrm{ and }\\ & \sum_{\left\{j:e_j\in e_i^{\theta}\right\}}{w_j}=w^{\theta}(e_i^{\theta}) \textrm{ for } 1\leq i\leq n \Bigg\}.
\end{split}
\]
If $D$ contains no $\theta$--graphs, and so $\theta(D)$ is empty, take $\kc{D}$ to consist of a single vertex.

\begin{definition}
Let $\tilde{\theta}(D)$ be the planar graph obtained from $\theta(D)$ by cutting apart vertices to make the $\theta$--graphs disjoint.
\end{definition}

Note that a region of $\sphere\setminus\tilde{\theta}(D)$ corresponds to a union of regions of $\sphere\setminus\theta(D)$. 
We will refer to these as \textit{a region of $\theta(D)$}.

\begin{definition}
Let $r$ be a region of $\theta(D)$. Define the \textit{positive boundary} $\partial^+ r$ of $r$ to be the edges of $\theta(D)$ which $r$ meets exactly on the negative side, and the \textit{negative boundary} $\partial^- r$ to be those it meets exactly on the positive side.

Using this, the region $r$ defines a map $r^{\theta}$ from a subset of $\V(\kc{D})$ to $\V(\kc{D})$, given by  $r^{\theta}(w_1,\ldots,w_n)=(w'_1,\ldots,w'_n)$, where
\[
w'_i=
\begin{cases}
w_i+1 & \textrm{if } e_i\in \partial^+ r\\
w_i-1 & \textrm{if } e_i\in \partial^- r\\
w_i & \textrm{else.}
\end{cases}
\]
Thus $r^{\theta}(w_1,\ldots,w_n)$ is defined when $w_i>0$ for all $e_i\in\partial^- r$.

We describe the process of applying $r^{\theta}$ to $v\in\V(\kc{D})$ as \textit{adding the region $r$ to $v$}.
\end{definition}

The higher-dimensional simplices of $\kc{D}$ are defined as follows. A set of distinct vertices $v_0,\ldots,v_k$ spans a $k$--simplex if there is a choice of labelling of the regions of $\theta(D)$ as $r_1,\ldots,r_m$ such that $r_j^{\theta}(\cdots r_1^{\theta}(v_0)\cdots)$ is defined for $1\leq j\leq m$ and each $v_i$ occurs as $r_j^{\theta}(\cdots r_1^{\theta}(v_0)\cdots)$ for some $j$. 
That is, every simplex can be extended to an $(m-1)$--simplex, and we can cycle through the vertices of an $(m-1)$--simplex by adding each region once, in some order.
Note that $r_m^{\theta}(\cdots r_1^{\theta}(v_0)\cdots)=v_0$.

A metric is defined on $\V(\kc{D})$ in the same way as that defined on $\ms(L)$. That is, the distance between two
vertices is the distance in the 1--skeleton of $\kc{D}$ when every edge has length 1.

Note that $\kc{D}$ depends only on $\theta(D)$. A vertex $v$ of $\kc{D}$ specifies a reduced, special alternating diagram $D_v$ of $L$, to which Seifert's algorithm can be applied. $D_v$ is obtained from $D$ by a canonical set of flypes. A flype on $D$ can result in either of two diagrams, which differ by turning over the whole diagram. 
However, the surface given by Seifert's algorithm is independent of this choice. This gives a map $\mathcal{A}$ from $\V(\kc{D})$ to $\V(\ms(L))$.
\begin{figure}[htbp]
\centering
\input{pictexfiles/hsdefnspic4}
\caption{\label{hsdefnspic4}}
\end{figure}

Figure \ref{hsdefnspic4} shows the graph $\theta(D_{\alpha})$ for the diagram $D_{\alpha}$ in Figure \ref{hsdefnspic1}, marked with the weights that come from $D_{\alpha}$. The complex $\kc{D_{\alpha}}$ is 3--dimensional, and this vertex lies in four 3--simplices, corresponding to the cycles
\[
\begin{split}
(1,0,2,0,1)\rightarrow (1,0,3,0,0) \rightarrow (0,1,3,0,0) \rightarrow (1,0,2,1,0) \rightarrow (1,0,2,0,1),\\
(1,0,2,0,1)\rightarrow (0,1,2,0,1) \rightarrow (0,1,3,0,0) \rightarrow (1,0,2,1,0) \rightarrow (1,0,2,0,1),\\
(1,0,2,0,1)\rightarrow (0,1,2,0,1) \rightarrow (1,0,1,1,1) \rightarrow (1,0,2,1,0) \rightarrow (1,0,2,0,1),\\
(1,0,2,0,1)\rightarrow (0,1,2,0,1) \rightarrow (1,0,1,1,1) \rightarrow (1,0,1,0,2) \rightarrow (1,0,2,0,1).
\end{split}
\]

\section{Special form}\label{specialsection}

\subsection{Definition}

Let $L$ be a special alternating link with a special alternating diagram $D\subset\sphere\subset\Sphere$.

\begin{definition}
Let $\bbb{a}$ be the 3--ball lying above $\sphere$ in $\Sphere$, $\bbb{b}$ the 3--ball lying below $\sphere$, and $P$ the set of midpoints of edges of $D$. We call a section of $L$ between two consecutive points of $P$ an \textit{arc of $L$}. 
\end{definition}

By an isotopy, $L$ can be arranged such that $L\cap\sphere=P$, with overcrossing arcs of $L$ lying in $\bbb{a}$ and undercrossing arcs lying in $\bbb{b}$ (as in \cite{MR860665}).
Let $R$ be the Seifert surface given by applying Seifert's algorithm to $D$. 
Let $\nhd(L)$ be a regular neighbourhood of $L$ in $\Sphere$ and $\nhd_{L}(R)$ a product neighbourhood of $R$ in $\Sphere\setminus\nhd(L)$. Then $\nhd(R)=\nhd(L)\cup\nhd_{\!L}(R)$ is a regular neighbourhood of $R$ in $\Sphere$.

Consider an incompressible Seifert surface $R'$ for $L$ that is disjoint from $R$. By an isotopy, $\partial R'$ can be made to run along the opposite side of $\partial\nhd(L)$ to $\partial R$ (again as in \cite{MR860665}). In a neighbourhood of a crossing, $\partial R$ and $\partial R'$ are as shown in Figure \ref{specialpic1}.
\begin{figure}[htbp]
\centering
\input{pictexfiles/specialpic1}
\caption{\label{specialpic1}}
\end{figure}

Now isotope $R'$, keeping $\partial R'$ fixed and keeping $R'$ disjoint from $R$, to minimise $|R'\cap \sphere|$. Note that $R'\cap\sphere$ is disjoint from the black regions of $D$. Since $R'$ is incompressible and $\Sphere\setminus\nhd(R)$ is irreducible, $R'\cap \sphere$ contains no closed components, so it consists of arcs with their endpoints on $\partial R'\cap\sphere$. Similarly, no such arc has both endpoints at the same point of $R'\cap \sphere$. On the other hand, every point of $R'\cap \sphere$ is an endpoint of at least one arc. We will identify these points with $P$. 

Suppose there exists $p\in P$ such that at least two arcs of $R'\cap\sphere$ have an endpoint at $p$. Then $R'$ can be isotoped to reduce the number of endpoints of arcs at $p$ by 2. This reduces the number of arcs of $R'\cap\sphere$ and any closed curve created can be removed, which contradicts the minimality of $|R'\cap\sphere|$. Hence each point $p\in P$ is the endpoint of exactly one arc.

\begin{definition}
Call a set of disjoint arcs lying in the white regions of $D$ with exactly one arc endpoint at each point of $P$ a \textit{set of $P$--arcs}.
\end{definition}

Consider the (probably disconnected) surface $R'_a=R'\cap \bbb{a}$. Its boundary $\partial R'_a$ projects to disjoint simple closed curves in $\sphere$, together composed exactly of the set of $P$--arcs and the overcrossing arcs of $L$. Unless $L$ is the unknot, each component includes both overcrossings and $P$--arcs, alternating around the curve. 

We wish to show that every component of $R'_a$ is a disc. To see this, first isotope each component downwards so that its boundary lies in $\sphere$.
Now choose a closed curve $C$ in $\partial R'_a$ that is innermost in $\sphere$.  It bounds a disc in $\bbb{a}$ that is otherwise disjoint from $R'_a$, and so bounds a disc in $R'$. This disc cannot lie below $C$ in $R'$ since part of $C$ lies on $\partial R'$, so it lies above $C$.
If the interior of the disc meets $\sphere$, then it also meets $\partial R'$, which cannot be the case.
Thus the disc lies in $\bbb{a}$. By isotoping it down towards $\sphere$, we see that for our current argument we may discard this component of $R'_a$. Thus inducting on $|\partial R'_a|$ gives the required result.

A similar argument holds for $R'_b=R'\cap \bbb{b}=R'\setminus\Int_{R'}(R'_a)$. Hence $R'$ is completely specified by the set of $P$--arcs $R'\cap\sphere$ added to the diagram $D$.

\begin{definition}
We will say that a surface defined in this way by a set of $P$--arcs is in \textit{special form}.
\end{definition}

Suppose $D$ has $n$ crossings.
Then there are $n$ $P$--arcs. Let $n_a$ be the number of discs in $R'_a$, and $n_b$ the number in $R'_b$. 
Then $\chi(R')=2n-(2n+n)+(n_a+n_b)=-n+n_a+n_b$.

Note that $n_a=|R'\cap \bbb{a}|=|\partial(R'\cap \bbb{a})|$. Hence $R'$ has minimal genus if and only if $|\partial(R'\cap \bbb{a})|+|\partial(R'\cap \bbb{b})|$ (that is, the number of simple closed curves formed from the $P$--arcs and the arcs of $L$) is maximised.


\subsection{Some surfaces in special form}

Fix a special alternating diagram $D$ of a link $L$. Let $R$ be the Seifert surface given by applying Seifert's algorithm to $D$. We can define a Seifert surface $R'$ in special form by putting a $P$--arc across the negative side of each crossing, as show in Figure \ref{coherencepic1}.
\begin{figure}[htb]
\centering
\input{pictexfiles/coherencepic1}
\caption{\label{coherencepic1}}
\end{figure}
By pushing each $P$--arc close to the corresponding crossing, it is easy to see that $R'=R$. Placing a $P$--arc on the positive side of each crossing again gives a surface equivalent to $R$. It is given by pushing $R'$ through the parallel surface $R$ to the other side.

Figure \ref{coherencepic2} shows two digrams $D_{\beta}$ and $D_{\beta}^*$ of a link $L_{\beta}$ that differ by a flype along the flype circle $\phi$ shown.
\begin{figure}[htb]
\centering
\hspace{1.3em}
\input{pictexfiles/coherencepic2a}
\hspace{1.3em}
\input{pictexfiles/coherencepic2b}
\caption{\label{coherencepic2}}
\end{figure}
Let $R_{\beta}, R_{\beta}^{*}$ be the surfaces given by applying Seifert's algorithm to $D_{\beta}, D_{\beta}^*$ respectively. Consider the effect on $R_{\beta}^{*}$ of an isotopy that takes $D_{\beta}^*$ to $D_{\beta}$. By inspection, $R_{\beta}^{*}$ can be put into special form with respect to $D_{\beta}$ as shown in Figure \ref{coherencepic3}a. Figure \ref{coherencepic3}b shows $\partial(R_{\beta}^{*}\cap \bbb{a})$. 
\begin{figure}[htbp]
\centering
(a)
\input{pictexfiles/coherencepic3a}
(b)
\input{pictexfiles/coherencepic3b}
\caption{\label{coherencepic3}}
\end{figure}

The special form of $R_{\beta}^{*}$ has the following description. The flype arc of $\phi$ is a $P$--arc. Every crossing $c\neq c_{\phi}$ (where $c_{\phi}$ is the flype crossing of $\phi$) has a $P$--arc across it. This $P$--arc crosses $c$ on its negative side if $c$ is on the positive side of $\phi$, and on its positive side if $c$ lies on the negative side of $\phi$.

We can extend this description to the case of more than one flype circle on a link diagram $D$, provided we can consistently define the notion of being on the positive/negative side of the flype circles.

\begin{definition}
Call a set of distinct flype circles with this property \textit{coherent}. Otherwise call them \textit{incoherent}.
\end{definition}

\begin{remark}
Coherent flype circles must be pairwise disjoint. Figure \ref{coherencepic4} shows a set of incoherent disjoint flype circles on the diagram $D_{\alpha}$.
\begin{figure}[htb]
\centering
\input{pictexfiles/coherencepic4}
\caption{\label{coherencepic4}}
\end{figure}
\end{remark}

Using the above description, a set of coherent flype circles gives a set of arcs in the white regions of $D$. We need to check whether they form a set of $P$--arcs.

\begin{lemma}
The arcs defined by a coherent set of flype circles form a set of $P$--arcs.
\end{lemma}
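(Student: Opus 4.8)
The plan is to verify, one at a time, the three conditions in the definition of a set of $P$--arcs: that the arcs lie in white regions, that they are pairwise disjoint, and that exactly one arc--endpoint lies at each point of $P$.

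The first condition is immediate from the construction. The flype arc of each flype circle $\phi_i$ is by definition disjoint from $R$ and so lies in the white regions, and the little arc placed across a crossing $c$ that is not a flype crossing lies in the white region meeting the prescribed side of $c$, exactly as in the single--circle picture used above for $R_{\beta}^{*}$. For disjointness I would first invoke the Remark, so that the $\phi_i$ are pairwise disjoint; the flype arcs, being sub--arcs of the $\phi_i$, are then pairwise disjoint, and the little arcs, lying in disjoint neighbourhoods of their crossings, are disjoint from one another and (with a short argument, since a flype arc of $\phi_i$ misses a neighbourhood of every crossing other than $c_{\phi_i}$) from the flype arcs too.

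The real work is in the endpoint count. Since the construction assigns exactly one arc, with two endpoints, to each of the $n$ crossings, and $P$ has $2n$ points (as $D$ has $n$ crossings and hence $2n$ edges), it is enough to show each $p\in P$ receives at most one endpoint, equivalently at least one. Fix $p\in P$, the midpoint of an edge $E$ of $D$. Because $D$ is special, $E$ borders exactly one black region and one white region $W$; the only arcs that can meet $p$ are those assigned to the two crossings $c,c'$ at the ends of $E$, and each such arc meets $p$ precisely when it has been placed into $W$. Thus the statement reduces to a claim about each white region $W$ separately: the arcs placed into $W$ pair up the midpoints of the edges of $\partial W$ with no midpoint used twice and none omitted. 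Going around $\partial W = c_1F_1c_2F_2\cdots c_mF_m$, the arc placed at a corner $c_j$ that lies in $W$ joins the midpoints of $F_{j-1}$ and $F_j$, so this pairing is a perfect matching exactly when the corners whose arc lies in $W$ form one of the two alternating subsets of $\{c_1,\dots,c_m\}$. For the reference surface $R$ (all arcs on negative sides), which we already know is in special form, this holds because ``the negative side of $c_j$ faces $W$'' alternates around $\partial W$. Comparing a general coherent configuration with $R$, the matching property for $W$ is therefore equivalent to: \emph{all crossings on $\partial W$ have their arc placed on the same side} (negative for all, or positive for all), together with a separate check at any $c_j$ that is a flype crossing, where the little arc is replaced by a flype arc.

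It remains to deduce this last statement from coherence, and this is where I expect the main difficulty to lie. For a single flype circle $\phi$, ``same side'' just says that all crossings of $\partial W$ lie on the same side of $\phi$, which is clear for every white region not met by the flype arc of $\phi$ and needs a direct look at the few regions that are; the role of the coherence hypothesis is precisely that the side--assignments coming from the different $\phi_i$ combine into a single well--defined side for each crossing, so that these local checks can be run simultaneously for the whole family — and it is the failure of this that makes the configuration of Figure~\ref{coherencepic4} illegal. The fiddliest case, and the one I would expect to cost the most, is a crossing that is the flype crossing of one $\phi_i$ while lying to one side of another $\phi_j$, where the flype arc of $\phi_i$ and the side prescribed by $\phi_j$ must be reconciled around the relevant white regions. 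If the direct argument becomes unmanageable I would fall back on induction on the number of flype circles: remove an ``outermost'' circle $\phi_k$, note that the rest remain coherent on the flyped diagram $D_{\phi_k}$, and transport the arc system across the flype, with the single--circle case (already settled in the discussion of $R_{\beta}^{*}$) as the base.
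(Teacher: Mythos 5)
There is a genuine gap: the only part of the argument you actually complete is the easy counting reduction (exactly one endpoint per edge follows from at least one endpoint per edge), which is also how the paper begins. The substantive content of the lemma --- that coherence forces at least one arc endpoint onto every edge --- is precisely the step you defer, saying ``this is where I expect the main difficulty to lie'' and offering only a sketch plus a fallback induction that is not carried out. Moreover, the intermediate combinatorial model you set up is faulty in exactly the interesting cases. A flype arc lies in a single white region $W$ but joins the midpoints of two edges of $\partial W$ that are in general \emph{not} adjacent along $\partial W$ (its endpoints sit on the edges bordering the two black regions at the flype crossing, which may be far from the flype crossing along $\partial W$); so your claim that the only arcs meeting a point $p$ are those ``assigned to the two crossings at the ends of $E$'', and your description of the matching in $W$ as joining $F_{j-1}$ to $F_j$ at corners, both fail for such regions. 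Likewise the asserted equivalence ``matching in $W$ $\Leftrightarrow$ all crossings of $\partial W$ have their arc on the same side'' is false whenever a flype arc crosses $W$: the crossings of $\partial W$ on the two sides of that flype circle are prescribed arcs on opposite (positive/negative) sides, and the configuration is nevertheless admissible, with the flype arc supplying the missing endpoints. So the reduction you propose cannot be ``deduced from coherence'' as stated.

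For comparison, the paper's proof avoids the per-region matching entirely and argues edge by edge. Fix an edge $\varepsilon$; if some flype circle crosses $\varepsilon$, a flype arc already ends there. Otherwise, use the structural fact that $\varepsilon$ lies on the positive side of one of its end crossings $c_1$ and on the negative side of the other, $c_2$. Both being flype crossings is impossible: since no flype circle crosses $\varepsilon$, that configuration would make the circles incoherent (Figure \ref{coherencepic5}). If neither is a flype crossing, both lie on the same side of the flype circles, and the arc across the appropriate one of $c_1,c_2$ (positive side of $c_1$ if they are on the negative side of the circles, negative side of $c_2$ otherwise) ends on $\varepsilon$; if exactly one, say $c_1$, is a flype crossing, then $\varepsilon$, hence $c_2$, lies on the positive side of the circles, and the arc on the negative side of $c_2$ ends on $\varepsilon$. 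If you want to salvage your write-up, the shortest route is to replace your region-by-region step with this three-case analysis at the ends of a single edge; your induction-on-flype-circles fallback could also be made to work, but you would still need to verify that the remaining circles are coherent on the flyped diagram and that the arc systems correspond, none of which is done in the proposal.
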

\begin{proof}
This will be true if and only if exactly one endpoint of an arc lies on any given edge $\varepsilon$ of $D$. By counting the arcs we see that it is enough to check that at least one arc has an endpoint on $\varepsilon$.

If a flype circle crosses $\varepsilon$, then an arc has an endpoint on $\varepsilon$. Assume otherwise. Let $c_1,c_2$ be the crossings at the ends of $\varepsilon$ such that $\varepsilon$ lies on the positive side of $c_1$ and on the negative side of $c_2$. Then both, one or neither of $c_1,c_2$ is a flype crossing. Since $\varepsilon$ is not crossed by any flype circle, $c_1$ and $c_2$ cannot both be flype crossings, as otherwise the flype circles would be incoherent (see Figure \ref{coherencepic5}).
\begin{figure}[htb]
\centering
\input{pictexfiles/coherencepic5}
\caption{\label{coherencepic5}}
\end{figure}
Suppose neither is a flype crossing. They then lie on the same side of the flype circles, say the negative side. Then the arc across the positive side of $c_1$ has an endpoint on $\varepsilon$. Finally suppose, without loss of generality, that $c_1$ is a flype crossing, while $c_2$ is not. Then $c_2$ lies on the positive side of the flype circles, so has an arc across it on the negative side. This arc has an endpoint on $\varepsilon$.
\end{proof}

\begin{definition}
Given a special alternating link diagram $D$, say that a Seifert surface $R$ for $D$ is \textit{admissible at $D$} if $R$ can be put into special form with the following description.
\begin{itemize}
	\item There is a set of coherent flype circles in $D$.
	\item The flype arc of each flype circle is a $P$--arc.
	\item Every crossing of $D$ that is not a flype crossing has a $P$--arc across it, on the negative side if it lies on the positive side of the flype circles, and on the positive side if it lies on the negative side of the flype circles.
\end{itemize}
Call such a description of $R$ \textit{admissible}.
\end{definition}

\begin{remark}
We allow the case where there are no flype circles. We can then take the crossings of $D$ to all lie on the positive side of the flype circles, or all lie on the negative side. As we have seen, both these special forms describe the surface given by applying Seifert's algorithm to $D$.
\end{remark}

To construct a set of $P$--arcs, it is in fact sufficient to specify a coherent set of flype circles in $\theta(D)$, provided the edge of each such flype circle chosen as a flype crossing has at least one crossing in $D$ corresponding to it. To see this, notice by inspection that equivalent flype circles create the same set of $P$--arcs (see Figure \ref{coherencepic6}).
\begin{figure}[htb]
\centering
\input{pictexfiles/coherencepic6a}
\input{pictexfiles/coherencepic6b}
\caption{\label{coherencepic6}}
\end{figure}

\begin{lemma}\label{flypeslemma1}
Let $R'$ be a surface in admissible special form at a special alternating diagram $D$. Let $D'$ be the diagram given by applying all the flypes indicated by the flype circles. Then $R'$ is the surface given by applying Seifert's algorithm to $D'$.
\end{lemma}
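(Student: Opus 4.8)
The plan is to argue by induction on the number $k$ of flype circles occurring in the admissible description of $R'$. When $k=0$, the admissible description --- in the convention where every crossing of $D$ is regarded as lying on the positive side of the empty collection of flype circles --- puts a $P$--arc across the negative side of each crossing of $D$, and $D'=D$; by the discussion preceding Figure~\ref{coherencepic1} this set of $P$--arcs describes exactly the surface produced by Seifert's algorithm on $D$, as required. (The other admissible convention puts a $P$--arc on the positive side of each crossing and describes the same surface.)

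For the inductive step, suppose $k\geq1$ and write the flype circles as $\phi_1,\dots,\phi_k$. Coherent flype circles are pairwise disjoint, so the discs they flip form a nested family; choose $\phi:=\phi_1$ innermost, realise the flype along $\phi$ by an ambient isotopy $h$ of $\Sphere$, and set $D_\phi:=h(D)$. This choice of $\phi$ keeps the flype away from the flype--circle structure of the other circles, so each of $h(\phi_2),\dots,h(\phi_k)$ is again a flype circle of $D_\phi$, this set is again coherent with side data the $h$--image of the original, and flyping $D_\phi$ along these circles yields $D'$. The crucial point is that $h$ carries the given admissible special form of $R'$ at $D$ to an admissible special form of $R'$ at $D_\phi$ with flype circles $h(\phi_2),\dots,h(\phi_k)$. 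One checks this by reading off the local change near $\phi$, exactly as in the single--flype analysis surrounding Figures~\ref{coherencepic2} and~\ref{coherencepic3}: the flype arc of $\phi$, which is a $P$--arc of $R'$ at $D$, is taken by $h$ to the $P$--arc across the relocated crossing $c_\phi$; each crossing inside the disc flipped by $\phi$ has the side of its transverse orientation reversed by the flype and simultaneously switches the side of the flype circles on which it lies, so that the admissible rule --- a $P$--arc on the negative side when the crossing lies on the positive side of the flype circles, and on the positive side otherwise --- is preserved; and every crossing and $P$--arc outside the flipped disc is left fixed by $h$. Granting this, $R'$ is admissible at $D_\phi$ with $k-1$ flype circles, so by the inductive hypothesis $R'$ is the surface given by Seifert's algorithm on the diagram obtained from $D_\phi$ by flyping along $h(\phi_2),\dots,h(\phi_k)$, namely $D'$, which closes the induction.

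I expect the main obstacle to be the verification of this crucial point, and within it the positive/negative--side bookkeeping: one must check that a single flype reverses the transverse--orientation side of each crossing inside the flipped disc in precisely the way needed to keep the admissible rule intact, and that this is compatible with the global, coherent choice of sides for the surviving flype circles. This amounts to carrying out the analysis behind Figure~\ref{coherencepic3} for an arbitrary special alternating diagram rather than the example $D_\beta$; the remainder then reduces to the base case together with the routine fact that flypes along disjoint flype circles may be performed one at a time.
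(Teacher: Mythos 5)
Your proposal is correct and follows essentially the same route as the paper: induction on the number of flype circles, flyping along an innermost circle first, and checking that the resulting diagram together with the transported $P$--arcs and the remaining circles is again an admissible special form (the paper verifies this step by the local picture in Figure \ref{coherencepic7}, which is exactly the side-swapping bookkeeping you describe). No further comment is needed.
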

\begin{proof}
We prove this by induction on the number of flype circles. If there are no flype circles in the admissible special form then the result holds. 

Suppose there is at least one flype circle, and choose an innermost flype circle $\phi$. Then $D$ has the form shown in Figure \ref{coherencepic7}a, where $A$ contains no flype circles. 
The flype $\phi$ changes the diagram to that in Figure \ref{coherencepic7}b. 
This also gives $R'$ in admissible form. By induction, the result follows.
\begin{figure}[htb]
\centering
(a)
\input{pictexfiles/coherencepic7a}
(b)
\input{pictexfiles/coherencepic7b}
\caption{\label{coherencepic7}}
\end{figure}
\end{proof}

\subsection{Relating $\kc{D}$ and $\ms(L)$}

Our aim is to relate $\kc{D}$ to $\ms(L)$. We have a map $\mathcal{A}\colon\V(\kc{D})\to\V(\ms(L))$, as described in Section \ref{hssection}. To study the properties of this map, we will establish a local description at each vertex of $\kc{D}$.

\begin{proposition}\label{edgestoedgesprop}
Let $v_0\in\V(\kc{D})$. As above, $v_0$ corresponds to a prime, reduced, special alternating diagram $D_{v_0}$, and $R_0=\mathcal{A}(v_0)$ is given by applying Seifert's algorithm to $D_{v_0}$. Without loss of generality, we may assume $D=D_{v_0}$.

Let $v_1\in\V(\kc{D})$ with $\dist_{\kc{D}}(v_0,v_1)=1$. Then $\dist_{\ms(L)}(R_0,\mathcal{A}(v_1))\leq 1$.
\end{proposition}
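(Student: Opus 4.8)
The plan is to realise $\mathcal A(v_1)$ in special form at $D=D_{v_0}$: since every surface in special form at $D$ is disjoint from the Seifert surface $R=R_0$ by construction, producing such a surface ambient isotopic to $\mathcal A(v_1)$ gives $\dist_{\ms(L)}(R_0,\mathcal A(v_1))\le 1$ at once.

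First I would unpack the hypothesis. As $\dist_{\kc D}(v_0,v_1)=1$, the vertices $v_0,v_1$ span a $1$--simplex of $\kc D$, so there is an ordering $r_1,\dots,r_m$ of the regions of $\theta(D)$ with every partial composite $r_k^{\theta}(\cdots r_1^{\theta}(v_0)\cdots)$ defined, the full cycle returning to $v_0$, and $v_1=r_j^{\theta}(\cdots r_1^{\theta}(v_0)\cdots)$ for some $j$. Thus $v_1$ is obtained from $v_0$ by successively adding the regions $r_1,\dots,r_j$, along an order for which every intermediate weight vector has nonnegative entries. Also, since $D=D_{v_0}$, the vertex $v_0$ is itself the weight vector $(w(e_1),\dots,w(e_n))$ read off from $D$.

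Next I would convert this sequence of region additions into flypes on $D$. Adding a single region $r$ to a vertex corresponds geometrically to turning over $r$ in the diagram, i.e.\ to performing simultaneously the flypes at the crossings met by $\partial r$ (the $\pm 1$ in the definition of $r^{\theta}$ recording whether such a crossing passes to the positive or to the negative side). Composing the moves for $r_1,\dots,r_j$ then yields a set of flype circles $\Phi$ in $D$, and the key claim is that $\Phi$ is \emph{coherent} in the sense of the definition preceding Lemma~\ref{flypeslemma1}, with $D_{\Phi}=D_{v_1}$. A consistent choice of positive/negative side for $\Phi$ is obtained by declaring the regions $r_1,\dots,r_j$ to lie on one fixed side; an inconsistency of the kind shown in Figures~\ref{coherencepic4} and~\ref{coherencepic5} would force one of the intermediate weight vectors to acquire a negative entry, contradicting that every partial composite is defined. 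One then matches, crossing by crossing, the side data of $\Phi$ against the weight vector $v_1$ to confirm $D_{\Phi}=D_{v_1}$. Given coherence, the family $\Phi$ determines a set of $P$--arcs (by the lemma on coherent flype circles), hence a surface $R'$ in admissible special form at $D$; by Lemma~\ref{flypeslemma1}, $R'$ is exactly the surface obtained by applying Seifert's algorithm to $D_{\Phi}=D_{v_1}$, that is $R'=\mathcal A(v_1)$ up to ambient isotopy. Being in special form at $D$, $R'$ is disjoint from $R=R_0$ (if $\Phi=\emptyset$ then $v_1=v_0$, which is excluded), so $R_0$ and $\mathcal A(v_1)$ are realised disjointly and $\dist_{\ms(L)}(R_0,\mathcal A(v_1))\le 1$.

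I expect the middle step to be the main obstacle: a single edge of $\kc D$ may correspond to adding several regions, and one must show that the resulting collection of flypes is genuinely coherent and realises $D_{v_1}$. The conceptual point is that the ``all partial composites defined'' condition built into the definition of a simplex of $\kc D$ is precisely the coherence condition on flype circles; pinning this down requires careful local bookkeeping around each edge of $\theta(D)$, tracking on which side of the flype arcs each crossing lies and matching the $\pm 1$ weight changes of $r^{\theta}$ against the effect of turning over a region on the Seifert circles. The reduction to disjointness and the final appeal to Lemma~\ref{flypeslemma1} are then routine.
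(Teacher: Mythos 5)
Your overall strategy is the same as the paper's: exhibit $\mathcal{A}(v_1)$ in admissible special form at $D$ via a coherent set of flype circles, identify it using Lemma \ref{flypeslemma1}, and conclude disjointness from $R_0$ because anything in special form at $D$ misses the Seifert surface of $D$. However, the step you yourself flag as ``the main obstacle'' is essentially the entire content of the proposition, and the mechanism you sketch for it does not work as stated. First, ``composing the moves for $r_1,\dots,r_j$'' is not well defined on $D$: adding one region is a combinatorial change of weight vector, and even for a single region the associated flypes are not ``the flypes at the crossings met by $\partial r$'' --- one must pair each edge of $\partial^- r$ (a flype crossing) with an edge of $\partial^+ r$ (a flype arc), and after performing the flypes for $r_1$ the diagram has changed, so the flype circles for $r_2$ live in a different diagram; there is no canonical ``composite'' collection of flype circles on $D$ obtained this way. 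Second, your coherence argument --- that an inconsistency would force a negative intermediate weight --- is asserted rather than proved, and it conflates two different things: coherence is a global consistency of positive/negative sides of the circles, not nonnegativity of partial weight vectors. Indeed, the intermediate vertices play no role at all in the paper's argument.

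The paper fills this gap by working only with the net change from $v_0$ to $v_1$. Let $\Lambda$ be the union of all the regions added; then each edge of $\theta(D)$ carries a label $+1$ (on $\partial^+\Lambda$), $-1$ (on $\partial^-\Lambda$) or $0$, and around each $\theta$--graph the nonzero labels alternate and sum to zero. One then pairs each $-1$ edge with the $+1$ edge on its positive side; each such pair is a flype circle with the $-1$ edge as flype crossing, which is legitimate because $v_1$ having nonnegative entries forces every $-1$ edge to have weight at least $1$ in $v_0$, i.e.\ to carry a crossing of $D$. The resulting circles are disjoint and coherent by construction (their positive sides are consistently determined by $\Lambda$), the corresponding flypes produce $D_{v_1}$, and Lemma \ref{flypeslemma1} identifies the surface given by the associated $P$--arcs with $\mathcal{A}(v_1)$, completing the proof. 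So your proposal has the right architecture, but without this aggregate construction the crucial middle step remains unproved, and the per-region/composition route you suggest would need to be abandoned rather than repaired.
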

\begin{proof}
Recall, for $R,R'\in\V(\ms(L))$, that $\dist_{\ms(L)}(R,R')\leq 1$ if and only if $R$ and $R'$ can be made disjoint.

Since $\dist_{\kc{D}}(v_0,v_1)=1$, vertex $v_1$ is obtained from $v_0$ by adding a sequence of distinct regions of $\theta(D)$.
Let $\Lambda$ be the union of these regions. Then $\partial\Lambda$ is split into its positive boundary $\partial^+\Lambda$ and its negative boundary $\partial^-\Lambda$. The change from $v_0$ to $v_1$ is achieved by subtracting 1 from the weight of all edges in $\partial^-\Lambda$ and adding one to the weight of all edges in $\partial^+\Lambda$.

Consider a $\theta$--graph in $\theta(D)$, with each edge labelled by the effect of adding $\Lambda$ (see, for example, Figure \ref{kdmslpic1}).
\begin{figure}[htbp]
\centering
\input{pictexfiles/kdmslpic1}
\caption{\label{kdmslpic1}}
\end{figure}
The total of these labels is 0, and the 1s and $-1$s must alternate. Pair each $-1$ with the 1 on its positive side. Each such pair defines a flype circle, whose crossing circle is the edge with label $-1$, and whose flype arc is the edge with the label 1. Note that this is always possible, because an edge with label $-1$ must have at least one crossing in $D$ corresponding to it.
Doing this for each $\theta$--graph gives a set of disjoint flype circles in $D$. Note also that these flype circles are coherent.

$\mathcal{A}(v_1)$ is given by applying these flypes to $D$, yielding a diagram $D_1$, and then applying Seifert's algorithm to $D_1$. By Lemma \ref{flypeslemma1}, $\mathcal{A}(v_1)$ is the surface described in admissible special form by this set of flype circles on $D$. Hence $\mathcal{A}(v_1)$ can be made disjoint from $R_0$.
\end{proof}

\begin{proposition}\label{kdmslprop2}
 Let $v_0$ and $R_0$ be as above. Let $R_1$ be a surface given in admissible special form at $D$. Then there exists $v_1\in\V(\kc{D})$ such that $\dist_{\kc{D}}(v_0,v_1)\leq 1$ and $\mathcal{A}(v_1)=R_1$.
\end{proposition}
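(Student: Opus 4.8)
The plan is to use the coherent set of flype circles supplied by the admissible special form of $R_1$, to read off a candidate vertex $v_1$ from the diagram obtained by performing those flypes, and then to verify the two required properties. In spirit this simply runs the argument of Proposition \ref{edgestoedgesprop} backwards.

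First I would fix a coherent set $\Phi$ of flype circles realising the admissible description of $R_1$, and let $D_1$ be the diagram obtained from $D$ by performing all the flypes in $\Phi$. By Lemma \ref{flypeslemma1}, $R_1$ is exactly the surface obtained by applying Seifert's algorithm to $D_1$. The flype circles in an admissible description are essential, so each $\phi\in\Phi$ is, in $\graph{F}{D}$, a union of a flype-crossing edge $e(\phi)$ with $w(e(\phi))\geq 1$ and a flype-arc edge $e^{*}(\phi)$ having the same endpoints; since $e(\phi)$ and $e^{*}(\phi)$ share both endpoints, each $\phi$ lies in a single $\theta$--graph of $\theta(D)$. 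Because coherent flype circles are pairwise disjoint and two disjoint such curves cannot share an edge, no edge of $\graph{F}{D}$ is used by two members of $\Phi$. Hence the weights $D_1$ induces on $\theta(D)$ agree with those of $D$ away from $\Phi$, are one less on each flype-crossing edge, and one more on each flype-arc edge; call the resulting weight vector $v_1$. These changes preserve the total weight of every $\theta$--graph and leave every weight non-negative (a flype-crossing edge starts with weight at least $1$), so $v_1\in\V(\kc{D})$. Since $D_1$ and the canonical diagram $D_{v_1}$ both arise from $D$ by flypes and induce the same weights on $\theta(D)$, they yield the same Seifert surface, so $\mathcal{A}(v_1)$ is the Seifert surface of $D_1$, namely $R_1$; the degenerate case $\Phi=\varnothing$ gives $v_1=v_0$ and $R_1=R_0$.

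It then remains to show $\dist_{\kc{D}}(v_0,v_1)\leq 1$, and this is the main obstacle. Here I would reverse the pairing construction in the proof of Proposition \ref{edgestoedgesprop}. The first and genuinely substantive step is the combinatorial claim that inside each $\theta$--graph $\Theta$, coherence of $\Phi$ forces the flype-crossing edges and flype-arc edges of the members of $\Phi$ lying in $\Theta$ to alternate in cyclic order around $\Theta$, each flype-arc edge immediately following its flype-crossing edge on the positive side; I expect to get this by showing that a violation would force a crossing of $D$ to lie simultaneously on the positive side of one member of $\Phi$ and the negative side of another with no consistent assignment, contradicting coherence. Granting the alternation, the regions of $\theta(D)$ lying, within each $\theta$--graph, between a flype-crossing edge and its paired flype-arc edge form a disjoint collection; let $\Lambda$ be their union. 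Then $\Lambda^{\theta}(v_0)=v_1$, and these regions can be added to $v_0$ one block at a time since at each stage the flype-crossing edge beginning the next block still has positive weight.

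Finally, to upgrade this to the statement that $v_0$ and $v_1$ span a $1$--simplex, I would order the regions of $\theta(D)$ outside $\Lambda$ so that, added successively starting from $v_1$, every step is defined; because the sum of the region maps over all regions of $\theta(D)$ is the identity, the vertex returns to $v_0$, exhibiting the required cyclic labelling of all regions in which $v_1$ occurs as a partial composite. Thus the two points needing real work are the alternation claim within each $\theta$--graph and the legality of an ordering of the complementary regions; both are dual to computations already carried out around Proposition \ref{edgestoedgesprop}, and I would handle them by the same correspondence between unions of regions and coherent sets of flype circles used there.
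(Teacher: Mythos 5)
Your set-up (defining $v_1$ by the $\pm 1$ weight changes and invoking Lemma \ref{flypeslemma1} to get $\mathcal{A}(v_1)=R_1$) matches the paper, but the combinatorial core has two genuine gaps. First, the assertion that ``two disjoint such curves cannot share an edge'' of $\graph{F}{D}$ is false: an edge of $\theta(D)$ typically corresponds to a whole chain of crossings of $D$ separated by white bigons, together with parallel arcs, so the flype crossing of one circle and the flype arc of another (disjoint, coherent) circle can correspond to the same edge of $\theta(D)$ --- Figure \ref{kdmslpic2} depicts exactly this configuration. The paper's proof therefore begins by merging such adjacent pairs of flype circles into a single circle, so that at most one flype circle runs over any edge of $\theta(D)$; without that normalisation your weight bookkeeping (one less on each flype-crossing edge, one more on each flype-arc edge, all coordinates staying non-negative) is not justified.

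Second, your key ``alternation claim'' is false as stated, and the step you defer is precisely where the work lies. A single flype circle is automatically coherent, yet its flype-crossing edge and flype-arc edge need not be adjacent in the cyclic order of their $\theta$--graph: unused edges, whole other $\theta$--graphs, and even further nested coherent flype circles can lie between them on the positive side, so coherence cannot force ``each flype-arc edge immediately following its flype-crossing edge''. Moreover, regions of $\theta(D)$ are regions of $\tilde{\theta}(D)$, so a region lying ``within one $\theta$--graph'' may meet edges of several $\theta$--graphs, and adding only your blocks need not realise the change from $v_0$ to $v_1$. What must actually be proved is that the regions forming the positive side $\Lambda$ of the flype circles (and then, symmetrically, the negative side) can be added in some order with every partial composite defined; the paper establishes this by an argument absent from your proposal: if no region of $\Lambda$ can be added, then every region of $\Lambda$ has a weight-zero negative-boundary edge interior to $\Lambda$, one builds a digraph on these regions, extracts a cycle, and concludes that some $\theta$--graph has all weights zero at $v_0$, contradicting the construction of $\theta(D)$. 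Your proposal replaces this with an unproven (and partly incorrect) adjacency statement together with an acknowledged gap about ordering the complementary regions, so the estimate $\dist_{\kc{D}}(v_0,v_1)\leq 1$ is not yet established.
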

\begin{proof}
Choose a set of flype circles showing that $R_1$ is in admissible special form. 
If there are no flype circles then $R_1=R_0$ and $v_1=v_0$. Assume there is at least one flype circle.

Each flype circle gives a weight of $-1$ to one edge of $\theta(D)$ and a weight of 1 to another edge. Modifying $v_0$ using these values gives another vector. Call this vector $v_1$. We aim to show that $v_1$ has the required properties.

Consider the set of flype circles in $\theta(D)$.
Note that a flype circle cannot run over an edge of $\theta(D)$ twice.
Suppose that two or more flype circles run over an edge $e\in\E(\theta(D))$. Choose two such flype circles $\phi,\phi'$ that are adjacent in $e$.
Since the flype circles are coherent, one must give $e$ a weighting of $1$ while the other gives it a weighting of $-1$. That is, $e$ is the flype crossing for $\phi$, say, and the flype arc for $\phi'$. By combining $\phi$ and $\phi'$ as shown in Figure \ref{kdmslpic2} to give a new flype circle $\phi''$, we can reduce the number of flype circles running over $e$ without changing the admissible form. Hence we may assume that at most one flype circle runs over any edge of $\theta(D)$.
\begin{figure}[htbp]
\centering
\input{pictexfiles/kdmslpic2}
\caption{\label{kdmslpic2}}
\end{figure}

Let $\Lambda\subseteq\sphere$ be the positive side of the flype circles, and $\lambda\subseteq\sphere$ the negative side. If $e$ receives a weighting of 1 then a flype arc runs along $e$, and $\Lambda$ lies on the negative side of $e$. If $e$ receives a weighting of $-1$ then $e$ corresponds to a flype crossing, and $\Lambda$ lies on the positive side of $e$. If $e$ receives a weighting of 0, no flype circle runs along it, and it lies in the interior of either $\Lambda$ or $\lambda$.

Each of $\Lambda$ and $\lambda$ is a non-empty union of regions of $\theta(D)$. We wish to show that those in $\Lambda$ can be ordered such that each partial composition of the corresponding maps is defined for $v_0$.
By symmetry, those in $\lambda$ can then also be suitably ordered so that the regions in $\Lambda$ followed by those in $\lambda$ define a simplex in $\kc{D}$.
Then $v_1\in\kc{D}$ and $\dist_{\kc{D}}(v_0,v_1)=1$.
It is sufficient to find a single region that can be added to $v_0$. Induction on the number of regions in $\Lambda$ then completes the proof.

Suppose no such region exists. Then each region $r$ in $\Lambda$ has a boundary edge $e(r)$ such that $r$ lies on the positive side of $e(r)$ and $w(e(r))=0$ (in $v_0$). This means $e(r)$ cannot lie on $\partial\Lambda$, so the region on the other side of $e(r)$ from $r$ is also in $\Lambda$. Construct a digraph $\mathcal{G}_{\Lambda}$ with an edge from $r$ across $e(r)$ for each $r$ in $\Lambda$. There are only finitely many regions in $\Lambda$, so $\mathcal{G}_{\Lambda}$ contains a simple closed curve $\rho$. Then there is a $\theta$--graph in $\theta(D)$ such that $\rho$ runs through every edge of the $\theta$--graph. Hence every edge of this $\theta$--graph has value 0 in $v_0$. This contradicts the construction of $\theta(D)$.
\end{proof}

\begin{lemma}\label{flaglemma}
The complex $\kc{D}$ is flag. That is, if the 1--skeleton of a simplex is contained in the complex then so is the simplex.
\end{lemma}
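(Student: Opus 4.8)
The plan is a direct combinatorial analysis of the edges of $\kc{D}$ and the subsets of regions that encode them. If $\theta(D)$ is empty then $\kc{D}$ is a single vertex and there is nothing to prove, so assume $\theta(D)\neq\emptyset$ and let $\mathcal{R}$ be the set of regions of $\theta(D)$. For $e\in\E(\theta(D))$ write $r^+(e),r^-(e)$ for the regions meeting $e$ on its positive, resp.\ negative, side; then adding all regions of a subset $S\subseteq\mathcal{R}$ to a vertex $v$ changes $w(e)$ by $\mathbf{1}[r^-(e)\in S]-\mathbf{1}[r^+(e)\in S]$, producing a well-defined lattice point $v_S$ (independent of the order of addition, though whether $S$ is \emph{addable} to $v$ — whether some order passes only through valid vertices — is an extra condition). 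I would first record a description of edges: $\{v,v'\}$ spans a $1$--simplex of $\kc{D}$ if and only if there is a proper non-empty $S\subseteq\mathcal{R}$, addable to $v$ in some order, with $v'=v_S$; such an $S$ is then \emph{unique}, and $\mathcal{R}\setminus S$ is automatically addable to $v'$ (it completes the cycle). The only non-formal point is uniqueness: the assignment $S\mapsto v_S-v$ is, after choosing an orientation, the coboundary map on $0$--cochains of the dual graph of $\tilde\theta(D)$, which is connected because $\tilde\theta(D)$ is embedded in $\sphere$; hence its kernel consists of the constant functions, so $v_S=v_{S'}$ forces $S'=S$ or $S'=\mathcal{R}\setminus S$, and the latter is excluded since $v_{\mathcal{R}\setminus S}=2v-v'\neq v'$ when $v\neq v'$.

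Next I would show that the edges through a common vertex encode a \emph{chain}. Let $v_0,\dots,v_k$ be distinct and pairwise joined by edges, let $S_i$ ($1\le i\le k$) be the unique subset with $v_i=(v_0)_{S_i}$, and put $S_0=\emptyset$. I claim $S_0,\dots,S_k$ are totally ordered by inclusion: for any $i,j$, if $T$ realizes the edge $\{v_i,v_j\}$ then $(v_i)_T=v_j$ translates into the equality of weight-change vectors ``($T$-vector) $=$ ($S_j$-vector) $-$ ($S_i$-vector)'', so the $\{0,1\}$-valued indicator $\mathbf{1}_T$ and the $\{-1,0,1\}$-valued function $\mathbf{1}_{S_j}-\mathbf{1}_{S_i}$ differ by a constant on $\mathcal{R}$ (kernel of the coboundary, again by connectedness). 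A $\{0,1\}$-function can differ by a constant from a $\{-1,0,1\}$-function only if the latter omits one of the values $+1,-1$; hence $\mathbf{1}_{S_j}-\mathbf{1}_{S_i}$ has a single sign, i.e.\ $S_i\subseteq S_j$ or $S_j\subseteq S_i$. After relabelling, $\emptyset=S_{i_0}\subsetneq S_{i_1}\subsetneq\cdots\subsetneq S_{i_k}$, the inclusions strict since the $v_i$, hence the $S_i$, are distinct.

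Finally I would assemble the $k$--simplex. For each $t<k$ the edge $\{v_{i_t},v_{i_{t+1}}\}$ has realizing subset $S_{i_{t+1}}\setminus S_{i_t}$: its weight-change vector is the difference of those of $S_{i_{t+1}}$ and $S_{i_t}$, which, as $S_{i_t}\subseteq S_{i_{t+1}}$, is the vector of the set difference, so uniqueness identifies them. Hence $S_{i_{t+1}}\setminus S_{i_t}$ is addable to $v_{i_t}$ in some order, carrying it to $v_{i_{t+1}}$; likewise $\mathcal{R}\setminus S_{i_k}$ is addable to $v_{i_k}$, carrying it to $v_0$. Since $S_{i_1}$, $S_{i_2}\setminus S_{i_1}$, \dots, $S_{i_k}\setminus S_{i_{k-1}}$, $\mathcal{R}\setminus S_{i_k}$ partition $\mathcal{R}$, concatenating these orderings produces a labelling of $\mathcal{R}$ all of whose partial compositions at $v_0$ are defined and which visits $v_{i_1},\dots,v_{i_k}$ at the block boundaries before closing up at $v_0$; thus $v_0,\dots,v_k$ span a $k$--simplex of $\kc{D}$.

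I expect the genuine obstacle to be the middle step — showing that the realizing subsets of the edges through a fixed vertex form a chain. The description of edges, the uniqueness of realizing subsets, and the final concatenation are all essentially formal once the ``weight-change vector'' viewpoint is set up; the chain property is where the planarity of $\tilde\theta(D)$ really enters, through connectedness of the dual graph and the resulting fact that a $\{-1,0,1\}$-valued difference of indicator functions that differs by a constant from a $\{0,1\}$-valued function cannot change sign. I would also verify the degenerate case $k=1$ (the construction then simply reproduces the cycle witnessing a single edge) and confirm that adding a region preserves the $\theta$--graph weight constraints, so that every intermediate lattice point is genuinely a vertex of $\kc{D}$ — though the latter is already implicit in the paper's definition of the maps $r^{\theta}$.
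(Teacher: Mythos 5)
Your proof is correct, and it reaches the crucial point --- that the region-sets realizing the edges through a fixed vertex are totally ordered by inclusion --- by a genuinely different mechanism than the paper. The paper's proof reduces the lemma to exactly this chain property (its sets $A_i$ are your $S_i$, and it likewise notes that it then suffices to concatenate, deferring that step to the proof of Proposition \ref{kdmslprop2}), but it establishes the chain property by a planar separation argument: assuming two incomparable sets, it forms $\Lambda_1,\Lambda_2$ from the symmetric difference, observes that a shared boundary edge would make two coordinates differ by $2$, contradicting adjacency, and then gets a contradiction because the union $\Lambda$ realizing the edge $\{v_1,v_2\}$ would have to lie outside $\partial\Lambda_1$ and inside $\partial\Lambda_2$ simultaneously. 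You instead encode the weight changes as a coboundary on the dual graph of $\tilde{\theta}(D)$, use connectedness of that dual together with the $\{0,1\}$ versus $\{-1,0,1\}$ range clash to force both uniqueness of realizing subsets and the chain property, and then assemble the simplex directly by concatenating the addable orders supplied by the edges of the chain, so you avoid any appeal to Proposition \ref{kdmslprop2}; what the paper's route buys is a picture-level argument with no cochain formalism, what yours buys is a self-contained and more systematic treatment of the ``differ by at most one'' constraints. Three small points to tidy: in the uniqueness step the correct conclusion from constancy is $S'=S$ unless one of the sets is $\emptyset$ or $\mathcal{R}$ (the complement $\mathcal{R}\setminus S$ has change vector equal to minus that of $S$, so it yields $2v-v'$, not $v'$) --- harmless, since you exclude the degenerate cases anyway; the coboundary set-up tacitly uses that every edge of $\theta(D)$ has two \emph{distinct} regions on its two sides, which is true because each edge together with another edge of its $\theta$--graph forms a simple closed curve separating $\sphere$, but deserves a line; and the ``if'' direction of your edge characterization (that addability of $S$ alone produces an edge, i.e.\ that $\mathcal{R}\setminus S$ is then addable to $v'$) is not automatic and would need the digraph argument of Proposition \ref{kdmslprop2} --- fortunately your argument only ever uses realizing subsets of edges already assumed to exist, so nothing is lost.
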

\begin{proof}
Let $v_0,\ldots,v_m$ be distinct vertices of $\kc{D}$ that are pairwise adjacent, for some $m\geq 2$. 
For $1\leq i\leq m$, there is a unique set $A_i$ of regions of $\theta(D)$ such that $v_i$ is obtained from $v_0$ by adding the regions in $A_i$ in some order. Let $A_0=\emptyset$. Then there is a partial order on the vertices $v_0,\ldots,v_m$ given by inclusion of the sets $A_0,\ldots,A_m$. 
We wish to show that $v_0,\ldots,v_m$ span a simplex in $\kc{D}$. From the proof of Proposition \ref{kdmslprop2} we see that it is sufficient to prove that this partial ordering of the vertices is actually a total order.

Suppose otherwise. Then there are two vertices that are not comparable. Without loss of generality, these are $v_1$ and $v_2$. Thus $A_1\setminus A_2$ and $A_2\setminus A_1$ are non-empty.
Let $\Lambda_1$ be the union of the regions in $A_1\setminus A_2$, and $\Lambda_2$ the union of the regions in $A_2\setminus A_1$. Then $\Lambda_1$ and $\Lambda_2$ have disjoint interiors. 

Suppose an edge $e$ of $\theta(D)$ lies on the boundary of $\Lambda_1$ and on the boundary of $\Lambda_2$. Then it lies on the positive boundary of one of $\Lambda_1,\Lambda_2$ and on the negative boundary of the other. Thus the coordinates of $v_1$ and $v_2$ corresponding to the edge $e$ differ by two. This contradicts that $v_1,v_2$ are adjacent. 

As $v_1$ is adjacent to $v_2$ in $\kc{D}$, there is a unique set $B$ of regions of $\theta(D)$ such that $v_2$ is obtained from $v_1$ by adding the regions in $B$ in some order. Let $\Lambda$ be the union of the regions in $B$. The boundary of $\Lambda$ is the union of the boundaries of $\Lambda_1$ and $\Lambda_2$, as these are the edges of $\theta(D)$ at which the coordinates of $v_1$ and $v_2$ differ. However, $\Lambda$ lies on the outside of $\partial \Lambda_1$ (that is, on the side away from $\Lambda_1$) and on the inside of $\partial \Lambda_2$, which is impossible. This gives the required contradiction.
\end{proof}

The following proposition says that $\ms(L)$ is also flag.
This means that each complex is defined by its 1--skeleton, so in relating them we may restrict our attention to comparing vertices and edges.

\begin{proposition}[\cite{MR1315011} Proposition 4.9]
Let $v_0,\ldots,v_n$ be distinct vertices of $\ms(L)$. If $v_i$ and $v_j$ span an edge in $\ms(L)$ whenever $i\neq j$, then $v_0,\ldots,v_n$ span an $n$--simplex in $\ms(L)$.
\end{proposition}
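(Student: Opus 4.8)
Unwinding the definition of $\ms(L)$, a $k$--simplex is spanned by minimal genus Seifert surfaces that admit pairwise disjoint representatives, so the statement is equivalent to the assertion that minimal genus Seifert surfaces $R_0,\dots,R_n$ which can be isotoped apart in pairs can be isotoped apart all at once. The plan is to prove this by induction on $n$, the substance lying in the inductive step, which I would isolate as a \emph{merging lemma}: if $S_1,\dots,S_k$ are mutually disjoint minimal genus Seifert surfaces for $L$ and $T$ is another minimal genus Seifert surface with $\dist_{\ms(L)}(T,S_i)\le 1$ for every $i$, then $T$ can be isotoped off $S_1\cup\dots\cup S_k$. Granting this, the induction hypothesis produces a mutually disjoint realization of $R_0,\dots,R_{n-1}$, and applying the merging lemma with this family as the $S_i$ and $T=R_n$ completes the step; the cases $n\le 1$ are immediate from the definitions.

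\textbf{Why the cover is needed.} A direct attack in $\Sphere$ --- put all the surfaces in general position and try to reduce $\sum_i |T\cap S_i|$ by innermost--disc surgeries, using that minimal genus Seifert surfaces are incompressible and that $\Sphere\setminus\nhd(L)$ is irreducible (so a compressing disc for $T$ along a curve of $T\cap S_i$ is matched by one on $S_i$ and the two cobound a ball across which one isotopes) --- founders because removing a curve of $T\cap S_i$ can create new curves of $T\cap S_j$ for $j\neq i$, and there is no obvious monovariant. The remedy, following Kakimizu, is to pass to the infinite cyclic cover $p\colon\tilde X\to X$ of $X=\Sphere\setminus\nhd(L)$ associated to the total linking number $\pi_1(X)\to\mathbb Z$, with generating deck transformation $t$. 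Each minimal genus Seifert surface $R$ is incompressible and lifts to a homeomorphic copy $\hat R$, with $p^{-1}(R)=\bigsqcup_{m\in\mathbb Z}t^m\hat R$; these lifts are disjoint, linearly ordered, and cut $\tilde X$ into fundamental domains $Y_m(R)$, each an irreducible piece homeomorphic to $X$ cut along $R$ and bounded by $t^m\hat R\cup t^{m+1}\hat R$. The point of the cover is that the relation ``$R_i$ and $R_j$ are disjoint'' now records a genuine linear order on lifts, and that order cannot be destroyed by later isotopies.

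\textbf{Proof of the merging lemma, and the main obstacle.} The one fact I would import is Kakimizu's characterization of adjacency: for minimal genus $R,R'$ one has $\dist_{\ms(L)}(R,R')\le1$ if and only if lifts can be chosen so that the lifts of $R$ strictly interleave with those of $R'$ along $\tilde X$ --- equivalently $\hat R'\subset Y_0(R)$ while $\hat R\subset Y_{-1}(R')$. Since the $S_i$ are already mutually disjoint, their lifts are genuinely mutually disjoint, hence all the families $\{t^m\hat S_i\}_m$ interleave consistently and together cut $\tilde X$ into cells, each an irreducible piece with incompressible boundary, freely permuted by $t$. Because $\dist_{\ms(L)}(T,S_i)\le1$, one lift $\hat T$ of $T$ can be isotoped into a single $Y_{m_i}(S_i)$, hence off every $t^m\hat S_i$; the task is to arrange a single lift of $T$ inside one cell of the common refinement of all the $S_i$. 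I would do this by a secondary induction on $k$: with $\hat T$ already inside a cell $C$ of the refinement coming from $S_1,\dots,S_{k-1}$, invoke $\dist_{\ms(L)}(T,S_k)\le1$ and run the innermost--disc exchanges of the naive argument \emph{inside} the irreducible piece $C$, where the linear order on lifts now prevents an exchange from recreating a removed intersection, so $\sum_i|\hat T\cap S_i|$ genuinely decreases. Projecting the final $\hat T$ by $p$ yields a representative of $T$ disjoint from $S_1\cup\dots\cup S_k$, proving the merging lemma and hence the proposition (recovering the flagness also obtained for general links by Przytycki and Schultens). The main obstacle, as flagged, is exactly this last step: showing the separate conditions ``one lift of $T$ per $S_i$--fundamental domain'' are jointly satisfiable. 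This is where minimality of genus does real work --- to guarantee that the cells of the refinement are the kind of irreducible, $\pi_1$--injective pieces in which both the surgery exchanges and the normalization of $\hat T$ behave --- and it is where passing to the cover earns its keep, since the linear ordering of lifts is precisely what removes the ``fix one crossing, break another'' obstruction that blocks the argument in $\Sphere$ itself.
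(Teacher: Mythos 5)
You should first be aware that the paper does not prove this statement: it is imported verbatim from \cite{MR1315011} (Proposition 4.9), so there is no in-paper argument to compare against, and your proposal has to stand on its own. Your covering-space framing — Kakimizu's reformulation of adjacency via the infinite cyclic cover, fundamental domains, and the goal of placing one lift of $T$ in a single cell of the refinement cut out by the lifts of the mutually disjoint $S_1,\dots,S_k$ — is the right setting and is indeed the strategy of the cited proof. But as written there is a genuine gap at exactly the point you flag.

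The gap is the crux of your merging lemma: with a lift $\hat T$ already inside a cell $C$ determined by $S_1,\dots,S_{k-1}$, you must remove its intersections with the lifts $t^m\hat S_k$ without leaving $C$. The mechanism you propose — innermost-disc exchanges, with recreation of old intersections "prevented by the linear order on lifts" — cannot do this. Once the surfaces are in minimal position, incompressibility and irreducibility already ensure that no circle of $\hat T\cap t^m\hat S_k$ bounds a disc on either surface, so there are no innermost discs left to exchange; moreover these surfaces have boundary, so the intersection generically contains arcs, which disc surgery never touches. What actually eliminates intersections between adjacent minimal genus Seifert surfaces is an isotopy across a product region — precisely the content of the companion result quoted in the paper as Proposition \ref{productregionsprop} from the same reference — and the danger is that a product region cobounded by $\hat T$ and a lift of $S_k$ may contain sheets of $S_1,\dots,S_{k-1}$ or of other translates, so pushing across it can create new intersections and push $\hat T$ out of $C$. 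The cited proof controls this by a careful choice of product region (innermost with respect to the ordering of the lifts, containing no sheets of the other surfaces) together with a genuine complexity decrease; none of that appears in your sketch, and the sentence "the linear order on lifts prevents an exchange from recreating a removed intersection" is exactly the assertion that requires proof, not a consequence of the setup. So the inductive step is assumed rather than established, and the proposal as it stands does not constitute a proof.
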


\section{Putting surfaces into admissible special form}\label{inductionsection}

Consider a surface in special form. Pick a white region of the diagram. At least one $P$--arc in this region will run across a crossing. Following Gabai (\cite{MR860665}), we can cut both the diagram and the surface along this crossing, giving a Seifert surface for a new link with fewer crossings than the original. The effect on the diagram and $P$--arcs is as shown in Figure \ref{regionpic1}. This gives the basis for an inductive argument.
\begin{figure}[htbp]
\centering
\input{pictexfiles/regionpic1}
\caption{\label{regionpic1}}
\end{figure}

Our inductive hypothesis in the proof of Proposition \ref{admissibleprop1} will relate to prime links. It is therefore useful to find a white region such that cutting along any crossing of that region will result in a prime link diagram.

\begin{theorem}[\cite{MR721450} Theorem 1]
Let $D$ be a reduced, alternating link diagram of a link $L$. Then $L$ is not prime if and only if this is visible in $D$. That is, if $L$ is not prime then there is a simple closed curve $\rho$ in $\sphere$ that is disjoint from the crossings of $D$ and meets the edges of $D$ exactly twice transversely such that there is at least one crossing on each side of $\rho$.
\end{theorem}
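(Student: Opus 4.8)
The plan is to reproduce Menasco's \emph{bubbled diagram} argument. One direction is immediate. If such a curve $\rho$ exists, let $D_1, D_2$ be the two discs it bounds in $\sphere$; then $\rho$ is the intersection with $\sphere$ of a decomposing $2$--sphere $Q\subset\Sphere$ (double $\rho$ slightly off $\sphere$, once into each side), and $Q$ meets $L$ transversely in exactly the two points of $\rho\cap D$, cutting $(\Sphere,L)$ into two ball--arc pairs carrying the tangles lying over $D_1$ and over $D_2$. This exhibits $L$ as a connected sum $L_1\#L_2$, where $L_i$ inherits the part of $D$ lying in $D_i$, capped off by a trivial arc. Since $D$ is reduced, so is each such sub-diagram (a capping arc carries no crossings, and a nugatory crossing of $D_i$ would be nugatory in $D$); it is alternating and, because at least one crossing of $D$ lies on each side of $\rho$, it has a crossing. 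A reduced alternating diagram with a crossing represents a non-trivial link (classically, as the unknot has no such diagram), so $L_1,L_2$ are non-trivial and $L$ is not prime.

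For the converse, suppose $L=L_1\#L_2$ with $L_1,L_2$ non-trivial, realised by a $2$--sphere $Q\subset\Sphere$ meeting $L$ transversely in two points such that neither of the two ball--arc pairs it cuts $(\Sphere,L)$ into is trivial; since $L$ is non-split and non-prime we may additionally take $Q\setminus L$ to be incompressible and $\partial$--incompressible in the exterior of $L$. Now I would pass to Menasco's normal form for the diagram: isotope $L$ so that, away from small round balls $B_1,\dots,B_n$ (one centred on each crossing), $L$ lies on $\sphere$ and meets each $B_i$ in the standard crossing tangle, and write $\sphere_+$ (resp.\ $\sphere_-$) for the $2$--sphere obtained from $\sphere\setminus\bigcup_i\Int(B_i)$ by capping off with the upper (resp.\ lower) hemispheres of the $B_i$. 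The crux is to isotope $Q$ into \emph{standard position}: transverse to $\sphere$ and to each $\partial B_i$; meeting each $B_i$ in either nothing or a single saddle disc (one that runs across the bubble separating the two strands); and meeting each of $\sphere_+,\sphere_-$ in a union of discs. This is the step that uses the alternating hypothesis, through the now-classical sequence of innermost-disc and outermost-arc simplifications of $Q$ against $\sphere$ and the bubbles, and it is the part I expect to be the main obstacle: one must rule out the local configurations (boundary-compressions of $Q$ into a bubble, trivial bubble-meeting arcs, discs of $Q\cap\sphere_\pm$ met by $L$ in a boundary-parallel arc, etc.) that are incompatible with the essentiality of $Q$ or with $D$ being reduced.

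With $Q$ in standard position the rest is combinatorial. The intersection $Q\cap\sphere$ is a disjoint union of simple closed curves avoiding the crossings of $D$ and meeting its edges transversely. If more than one curve occurs, pick one, $c$, that is innermost in $\sphere_+$, bounding a disc $E\subset\sphere_+$ meeting $Q$ only along $c$; analysing how $L$ and the bubbles meet $E$ (using standard position, and, if $E$ meets a bubble, that $D$ has no nugatory crossing) shows that $Q$ may be isotoped across $E$, removing $c$ and strictly decreasing $|Q\cap\sphere|$. Inducting downwards, we reach the case in which $Q$ meets $\sphere$ in a single circle $\rho_0$; a final local argument shows $\rho_0$ can be taken disjoint from every bubble, since a bubble met by $\rho_0$ would force the corresponding crossing to be nugatory. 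Then $\rho_0$ meets $D$ transversely in an even number of edge-points, and a bigon reduction of $\sphere\setminus(\rho_0\cup D)$ (shrinking an outermost bigon across $\rho_0$, which preserves standard position) forces that number to be exactly two. The two sides of $\rho_0$ in $\sphere$ carry the two tangles into which $Q$ splits $L$; as neither ball--arc pair is trivial and $D$ is reduced, each side must contain at least one crossing. Taking $\rho=\rho_0$ gives exactly the asserted visibility, completing the proof.
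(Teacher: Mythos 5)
This statement is not proved in the paper at all: it is quoted verbatim as Theorem~1 of \cite{MR721450} (Menasco), so there is no internal argument to compare yours against. What you have written is, in effect, an attempted reconstruction of Menasco's own bubble argument, and as such it is only a roadmap. The decisive content of the theorem --- isotoping the decomposing sphere $Q$ into standard position with respect to the bubbled diagram (meeting each bubble in saddle discs, meeting $\sphere_\pm$ in curves bounding discs), and then using the alternating and reduced hypotheses to rule out saddles and to cut $Q\cap\sphere$ down to a single circle disjoint from the bubbles --- is exactly the part you assert rather than prove, and you say so yourself (``it is the part I expect to be the main obstacle''). Every use of the hypothesis that $D$ is alternating lives in those lemmas; without carrying out the innermost-disc/outermost-arc analysis and the combinatorial elimination of the forbidden local configurations, the argument establishes nothing beyond the trivial observation that some essential sphere exists. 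So there is a genuine gap, and it sits precisely where the theorem's difficulty is.

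Two smaller points. First, your final step is both muddled and unnecessary: once $Q\cap\sphere$ is a single circle $\rho_0$ missing the bubbles, the link lies on $\sphere$ away from the bubbles, so every transverse meeting of $\rho_0$ with an edge of $D$ is a point of $Q\cap L$; hence $\rho_0$ meets $D$ in exactly two points immediately from $|Q\cap L|=2$, and no parity-plus-bigon argument is wanted (its appearance suggests a conflation with the counting arguments used for closed curves in the complement). Second, in the easy direction the non-triviality of the two factors rests on the theorem that a reduced alternating diagram with at least one crossing represents a non-trivial link; invoking it is fine, but it is a substantive classical result (Bankwitz, or the Tait conjectures) and should be cited as such --- your parenthetical ``as the unknot has no such diagram'' merely restates it.
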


\begin{remark}
We will call a link diagram \textit{prime} if the link is prime. That is, we do not require a `prime link diagram' to be reduced.
\end{remark}

\begin{definition}
Say that such a region is \textit{cuttable}.
\end{definition}

\begin{lemma}\label{cutlemma}
If $D$ is a prime, reduced, special alternating diagram of a link $L$, then there is a cuttable white region of $D$.
\end{lemma}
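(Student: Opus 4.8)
The plan is to recast the statement in terms of the black Tait graph of $D$ and then argue by induction using the Tutte $3$-connected decomposition. Since $D$ is special, its regions split into black and white ones; let $\mathcal{G}$ be the planar graph with one vertex in each black region and one edge through each crossing, as at the start of Section~\ref{hssection}. Cutting $D$ along a crossing $c$, as in Figure~\ref{regionpic1}, is the smoothing of $c$ that merges the two white regions meeting $c$ while keeping the two black regions apart, so in $\mathcal{G}$ it is simply the deletion of the edge $e_c$ through $c$; thus $D_c$ has black Tait graph $\mathcal{G}-e_c$, whose bridges (if any) are nugatory crossings of $D_c$. By \cite{MR721450} Theorem~1, a reduced alternating diagram presents a non-prime link exactly when its Tait graph has a cut vertex; hence ``$D$ reduced and prime'' means ``$\mathcal{G}$ is $2$-connected'', and — after contracting any bridges of $\mathcal{G}-e_c$ — $L_c$ is prime exactly when the resulting bridgeless graph is $2$-connected, i.e.\ when $\mathcal{G}-e_c$ has no cut vertex with a cycle on each side. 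Call a crossing $c$ \emph{bad} if this fails; then a white region is cuttable if and only if no crossing on its boundary is bad, and the lemma becomes the purely combinatorial statement: \emph{a $2$-connected plane graph has a face none of whose edges is bad}.

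Two local observations reduce the work. The edge $e_c=uv$ separates two faces $f,g$ of $\mathcal{G}$ (the two white regions at $c$); deleting $e_c$ fuses them into a single face whose boundary walk repeats a vertex $w$ precisely when $w\in(\partial f\cap\partial g)\setminus\{u,v\}$, and a connected plane graph with a non-simple face has a cut vertex. So $e_c$ can be bad only if $f$ and $g$ share such a third vertex, and then one checks that both $\{u,w\}$ and $\{v,w\}$ must be $2$-separating pairs of $\mathcal{G}$. It follows that if $\mathcal{G}$ is $3$-connected there are no bad edges, so any white region is cuttable; and a white bigon, if $\mathcal{G}$ has one, is cuttable, having no third vertex to share. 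The same analysis shows that every edge of a \emph{pendant path} of $\mathcal{G}$ (a maximal path whose interior vertices all have degree two) is good: deleting such an edge turns the whole path into bridges, whose suppression leaves a $2$-connected graph.

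For the general case, induct on the number of crossings. Let $T$ be the tree of the Tutte decomposition of $\mathcal{G}$ into $3$-connected pieces, polygons and bonds, glued along $2$-separating pairs; if $T$ has one node we are in one of the cases above, so assume $T$ has a leaf, attached to the rest of $\mathcal{G}$ along one virtual edge at a separating pair $\{a,b\}$. If the leaf is $3$-connected, choose one of its faces that meets neither the virtual edge nor both of $a,b$ (possible, since the virtual edge lies on exactly two faces and a $3$-connected plane graph has at least four faces); because no $2$-separating pair of $\mathcal{G}$ involves an interior vertex of a $3$-connected piece, no edge of that face is bad, and the corresponding white region is cuttable. If the leaf is a bond, the second observation produces a white bigon. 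Finally, if every leaf of $T$ is a polygon, each such leaf is a pendant path; contract all of them to single edges to get a strictly smaller $2$-connected plane graph $\mathcal{G}'$ and apply the inductive hypothesis — a cuttable face of $\mathcal{G}'$ gives one of $\mathcal{G}$ because pendant-path edges are always good and contracting a pendant path to an edge leaves the goodness of every other edge unchanged. I expect the main effort to lie in making the translation of ``cutting'' and of primeness fully rigorous — in particular handling the (possibly non-reduced) diagrams $D_c$ by contracting bridges before applying \cite{MR721450} — and in verifying the last sentence's invariance of goodness under contraction of pendant paths.
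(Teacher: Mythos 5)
Your reduction to plane graphs is the right dictionary (cutting a crossing is deletion of the corresponding edge of the black Tait graph; reduced and prime means loopless, bridgeless and without cut vertices; primeness of $L_c$ is read off, via Menasco, from the graph obtained from $\mathcal{G}-e_c$ by contracting bridges), and your first observation and the $3$-connected-leaf and bond-leaf cases look sound. The genuine gap is your second observation, which carries the whole polygon-leaf case: it is false that every edge of a pendant path is good, and false that suppressing the bridges created by such a deletion leaves a $2$-connected graph. Concretely, let $\mathcal{G}$ have vertices $a,s,v,t,b,x$ and edges $as,sv,va$ (a triangle), $bt,tv,vb$ (a second triangle meeting the first only at $v$), and $ax,xb$ (a pendant path joining the two triangles). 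This is a loopless, bridgeless plane graph with no cut vertex, so the corresponding special alternating diagram is reduced and prime ($8$ crossings). Deleting the pendant-path edge $ax$ makes $xb$ a bridge, and contracting it leaves two triangles glued at the cut vertex $v$; diagrammatically, cutting that crossing and untwisting the one nugatory crossing produces a connected sum of two trefoils, so $ax$ (and likewise $xb$) is bad. Hence the two white regions meeting the pendant path are not cuttable, and the cuttable regions (the two triangular faces $asv$ and $vtb$) are not contained in any single leaf of the decomposition.

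This example falls squarely into your final case (all three SPQR leaves are polygons), so the induction as written breaks exactly where it is needed: the lift of a cuttable face of $\mathcal{G}'$ back to $\mathcal{G}$ is justified by the false claim that pendant-path edges are always good. The step is probably repairable, but with a different statement: all edges of a given pendant path are good or bad simultaneously, and they are bad in $\mathcal{G}$ precisely when the contracted image edge is bad in $\mathcal{G}'$ (both deletions yield, after contracting bridges and suppressing degree-two vertices, the same graph); granting also your unproved invariance for the surviving edges, a cuttable face of $\mathcal{G}'$ then never uses the image of a bad pendant path and its lift is cuttable. That is a genuinely different (and still-to-be-proved) lemma from the one you assert, so as it stands the proof has a false step at a load-bearing point. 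For contrast, the paper's own proof is a short topological argument: assuming no white region is cuttable, it picks for each white region a circle exhibiting the connected sum created by cutting, arranges these circles to be disjoint, and gets a contradiction from an innermost one; no decomposition theory is needed.
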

\begin{proof}
Suppose otherwise. Choose a white region of $D$. Then this region has a crossing $c$ such that cutting along $c$ gives a connected sum. Since the original diagram was prime, $D$ has the form shown in Figure \ref{regionpic2}a, 
\begin{figure}[htbp]
\centering
(a)
\input{pictexfiles/regionpic2a}
(b)
\input{pictexfiles/regionpic2b}
\caption{\label{regionpic2}}
\end{figure}
where neither $A$ nor $B$ is trivial or consists of a single line of crossings surrounding black bigons. In particular, $A$ and $B$ must each contain a white region.
In this way, we can choose a simple closed curve $\rho_r$ for each white region $r$. We will allow two white regions to share one suitable curve.
Suppose, for regions $r,r'$, the curves $\rho_r,\rho_{r'}$ are distinct and cannot be isotoped to be disjoint. Then $D$ has the form shown in Figure \ref{regionpic2}b.
In this case, we could have chosen $\rho_r$ for both the regions $r$ and $r'$.
Thus the simple closed curves can be chosen to be disjoint. Choose a region $r$ such that $\rho_r$ is innermost. Then no white region $r'$ lies entirely inside $\rho_r$. This contradicts the choice of $\rho_r$.
\end{proof}

\subsection{Reducing a diagram}

In this section we consider some specific situations that will arise in the proof of Proposition \ref{admissibleprop1}.

Let $D, D^c$ be  special alternating link diagrams such that $D^c$ is obtained from $D$ by removing a crossing $c$ by a single type I Reidemeister move. Let $R$ be an incompressible Seifert surface given in special form at $D$. What changes need to be made to give $R$ in special form at $D^c$?

Any $P$--arcs that are distant from $c$ can be copied to $D^c$. We consider those close to $c$. 
The $P$--arcs around $c$ are as shown in Figure \ref{reducingpic1}a. Let $p$ be the point of $D^c$ corresponding to $c$ in $D$. 
\begin{figure}[htbp]
\centering
(a)
\input{pictexfiles/reducingpic1a}
(b)
\input{pictexfiles/reducingpic1b}
\caption{\label{reducingpic1}}
\end{figure}
One way to remove $c$ is to pull the undercrossing arc upwards into $\bbb{a}$ and then untwist. This leaves $R\cap\sphere$ as shown in Figure \ref{reducingpic1}b, giving a set of $P$--arcs on $D^c$, possibly with a single simple closed curve lying within the white region adjacent to $p$. Does this give a special form for $R$? One of the discs of $R\cap \bbb{b}$ has been moved, with a subdisc $S$ along part of its boundary being pulled up into $\bbb{a}$. Clearly this still leaves a disc in $\bbb{b}$. The disc $S$ is glued onto discs of $R\cap \bbb{a}$ along two disjoint subarcs of its boundary. If these two arcs are glued to distinct discs in $\bbb{a}$, this again gives a disc, and the $P$--arcs give a special form of $R$.
Alternatively, the two arcs may be glued to the same disc, in which case an annulus $T$ is formed. The core curve of $T$ bounds a disc in $\bbb{a}$ disjoint from $R$. As $R$ is incompressible, one component of $\partial T$ bounds a disc in $R$ disjoint from the other. This component cannot run along the link, so must be a simple closed curve $\rho$ of $R\cap\sphere$.
In this case, the $P$--arcs around $c$ in $D$ must be as shown in Figure \ref{reducingpic2}. The $P$--arcs in $D^c$, without $\rho$, give a special form for $R$.
\begin{figure}[htbp]
\centering
\input{pictexfiles/reducingpic2}
\caption{\label{reducingpic2}}
\end{figure}

We could instead have removed $c$ by first pushing the overcrossing arc downwards into $\bbb{b}$. Similar reasoning applies in this case.
If no annulus is formed in either case then around $c$ the simple closed curves bounding the discs of $R$ in $\bbb{a}$ and $\bbb{b}$, which are made up of $P$--arcs and arcs of $D$, must be connected as shown in Figure \ref{reducingpic3}.
\begin{figure}[htbp]
\centering
\input{pictexfiles/reducingpic3}
\caption{\label{reducingpic3}}
\end{figure}

\begin{definition}
Call the change of $D$ to $D^c$ given by pushing $c$ into $\bbb{a}$ and untwisting an \textit{$a$--reduction at $c$}, and say that we \textit{$a$--reduce} $D$. Similarly define a \textit{$b$--reduction} and \textit{$b$--reducing}.
\end{definition}

\begin{remark}\label{untwistremark}
If there is a $P$--arc across $c$ in $D$ then $a$--reducing $D$ at $c$ and $b$--reducing it give the same result, and do not affect any of the other $P$--arcs.
\end{remark}

For the rest of this section we assume $D^c$ is the $a$--reduction of $D$.

\begin{lemma}\label{oneuntwistlemma}
Suppose $D$ and $D^c$ are prime, and $D^c$ is reduced.
Suppose further that the special form for $R$ at $D^c$ is admissible (and hence $R$ is minimal genus). 
Then there is a $P$--arc in $D$ across $c$.
\end{lemma}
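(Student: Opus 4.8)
We prove the final statement as follows.

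The plan is to transfer the admissible form at $D^c$ to an admissible form for $R$ at $D$, and then to read off a $P$-arc across $c$ from the fact that $c$ cannot be a flype crossing of the flype circles realising that form.

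Since the special form for $R$ at $D^c$ is admissible, it is realised by a coherent set $\Phi^c$ of flype circles in $D^c$. The point $p$ of $D^c$ corresponding to $c$ lies in the interior of an edge of $D^c$, while the circles of $\Phi^c$ meet $D^c$ only at crossings and otherwise run through white regions; so after an isotopy we may assume that every circle of $\Phi^c$ avoids a small disc neighbourhood $N$ of $p$. As $D$ is obtained from $D^c$ by inserting a nugatory kink at $c$ inside $N$, the set $\Phi^c$ carries over to a set $\Phi$ of simple closed curves in $D$: each is still a flype circle (each of its two tangles still contains at least one crossing), the curves remain pairwise disjoint, and their positive and negative sides are still consistently defined, since nothing has changed outside $N$. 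Hence $\Phi$ is a coherent set of flype circles in $D$.

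By Lemma \ref{flypeslemma1} applied at $D^c$, the admissible form of $R$ at $D^c$ is the surface obtained from Seifert's algorithm on the diagram $\hat D$ got from $D^c$ by performing the flypes of $\Phi^c$. Performing the corresponding flypes of $\Phi$ on $D$ produces a special alternating diagram that is $\hat D$ with a nugatory kink reinserted. A nugatory crossing of a special alternating diagram is a half-twisted band attached along a disc of the Seifert surface (its Seifert circle is innermost), so it does not change that surface up to ambient isotopy; thus Seifert's algorithm applied to this diagram again yields $R$. Applying Lemma \ref{flypeslemma1} at $D$, the surface described in admissible special form at $D$ by the coherent set $\Phi$ is exactly this, namely $R$; so $R$ is admissible at $D$, with admissible description given by $\Phi$. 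Since every circle of $\Phi$ avoids $N$, none passes through $c$, so $c$ is not a flype crossing of $\Phi$, and by the third clause in the definition of an admissible description the non-flype crossing $c$ has a $P$-arc across it.

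The step I expect to require the most care is the claim that reinserting the nugatory crossing leaves the Seifert surface ambient isotopic to $R$ and keeps the diagram special alternating; the remainder is bookkeeping with Lemma \ref{flypeslemma1} and with the definitions of a flype circle and of an admissible description, and it is here that primeness of $D$ and $D^c$ and reducedness of $D^c$ are used, namely to make the admissible-form machinery available for $D^c$ and to transfer it to $D$. If the conclusion is to refer specifically to the special form of $R$ at $D$ obtained by $a$-reducing from $D^c$, one checks in addition that $a$-reducing the admissible form just constructed returns the given form at $D^c$, using that the $a$-reduction at a crossing with a $P$-arc across it is clean and affects no other $P$-arcs (Remark \ref{untwistremark}) and that $D^c$, being reduced, admits no further reduction. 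An alternative, more hands-on route is the contrapositive: were there no $P$-arc across $c$, the analysis of $a$- and $b$-reductions preceding the lemma would force the $P$-arcs near $c$ to be as in Figure \ref{reducingpic2} or Figure \ref{reducingpic3}, and a direct inspection of the resulting configuration at $D^c$ shows that neither is admissible.
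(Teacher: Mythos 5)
There is a genuine gap, and it is the central one. Your argument constructs \emph{some} admissible special form of $R$ at $D$ (the one described by the transferred flype circles $\Phi$) and observes that, since $c$ is not a flype crossing of $\Phi$, that form has a $P$--arc across $c$. But Lemma \ref{oneuntwistlemma} is a statement about the \emph{given} special form of $R$ at $D$ --- the specific set of $P$--arcs whose $a$--reduction is the hypothesised admissible form at $D^c$ --- and this is how the lemma is used in Proposition \ref{admissibleprop1} (one needs the $P$--arc across $c$ in the form one actually has, so that Remark \ref{untwistremark} applies and the reduction leaves the other $P$--arcs untouched). Exhibiting a different preimage of the admissible form under $a$--reduction does not help: the $a$--reduction map on special forms is not shown (and cannot simply be assumed) to be injective, and ruling out a preimage with \emph{no} $P$--arc across $c$ is precisely the content of the lemma. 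Your closing remark that one could check that $a$--reducing the constructed form returns the given form at $D^c$ only shows the constructed form is \emph{a} preimage, not \emph{the} preimage. A telltale sign is that your proof never makes essential use of the primeness of $D^c$ or of its being reduced (you invoke them only ``to make the machinery available''), whereas these hypotheses are exactly what excludes the bad preimage.

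Your suggested fallback --- ``were there no $P$--arc across $c$, direct inspection of the configurations of Figures \ref{reducingpic2} and \ref{reducingpic3} shows neither is admissible'' --- is not a proof and is not even locally true: those pictures are local and are perfectly compatible with admissibility near $c$. The paper's argument is global. It first arranges that the admissible form at $D^c$ has no flype circles, by performing the flypes simultaneously on $D^c$ and on $D$ while keeping the point $p$ fixed (this uses Lemma \ref{flypeslemma1} and the fact that flyping does not move the $P$--arcs adjacent to $p$); then, assuming no $P$--arc across $c$ and no annulus, it propagates the forced pattern of $P$--arcs (Figure \ref{reducingpic4} onward) crossing by crossing around a black region, concluding that $D^c$ has the connected-sum structure of Figure \ref{reducingpic7}. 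Since $D^c$ is reduced and alternating, this contradicts its primeness. Any correct proof has to carry out some version of this global step; your argument omits it entirely.
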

\begin{proof}
Consider the flype circles in the admissible special form. Let $D^c_0$ be the diagram obtained from $D^c$ by performing all the flypes so as to leave $p$ fixed. By Lemma \ref{flypeslemma1}, the surface given by the $P$--arcs on $D^c_0$ is $R$, and in particular is minimal genus.

This process of changing the diagram by a flype $\phi$ moves a $P$--arc only if it lies strictly inside $\phi$ on the side that is moved. Since $p$ was not moved by any flype, the $P$--arcs in $D^c$ that came from those adjacent to $c$ in $D$ were also not moved. This means we can perform the same changes to the diagram $D$ with its $P$--arcs as were made to $D^c$, to give a diagram $D_0$. These changes will correspond to performing flypes on $D$, but these flypes may not come from flype circles in the special form on $D$. However, we see that $D^c_0$ is obtained from $D_0$ by untwisting $c$, and the $P$--arcs in $D^c_0$ are those given by $a$--reducing $D_0$ at $c$. Hence we may assume that the admissible special form of $R$ at $D^c$ has no flype circles.
 
If either $a$--reducing or $b$--reducing $D$ at $c$ creates an annulus as described above, there is a $P$--arc across $c$ in $D$. Hence we may assume no annulus is formed. We use this to build up a picture of part of $D^c$, and so derive a contradiction.

From above (Figure \ref{reducingpic3}), in $\bbb{a}$ the $P$--arcs on $D^c$ near $p$ are connected as shown in Figure \ref{reducingpic4}.
\begin{figure}[htb]
\centering
\input{pictexfiles/reducingpic4}
\caption{\label{reducingpic4}}
\end{figure}
Since there is no $P$--arc across $c$ in $D$, $\sigma_1$ cannot consist entirely of $P$--arcs. Then, since there are no flype circles in the admissible special form on $D^c$, $\rho_1$ and $\rho_2$ each run across a single crossing.  From this we have Figure \ref{reducingpic5}.
\begin{figure}[htb]
\centering
\input{pictexfiles/reducingpic5}
\caption{\label{reducingpic5}}
\end{figure}
There is a $P$--arc across $c_1$, so a neighbourhood of $c_1$ is as shown in Figure \ref{reducingpic6}a.
\begin{figure}[htb]
\centering
(a)
\input{pictexfiles/reducingpic6a}
(b)
\input{pictexfiles/reducingpic6b}
\caption{\label{reducingpic6}}
\end{figure}
Consider the next crossing $c_2$ around the black region $r$ that $\sigma_2$ meets. Again, there is a $P$--arc across $c_2$, so we have the set-up shown in Figure \ref{reducingpic6}b.
Inductively we see that the arc $\sigma_2$ will return to $\partial r$ after every crossing. Thus $D^c$ has the structure shown in Figure \ref{reducingpic7}.
\begin{figure}[htbp]
\centering
\input{pictexfiles/reducingpic7}
\caption{\label{reducingpic7}}
\end{figure}
Because $D^c$ is reduced and alternating, this contradicts that it is prime.
\end{proof}

\begin{lemma}\label{twountwistlemma}
Suppose there is a reduced diagram $D^{c,c'}$ given by $a$--reducing $D^c$ at a crossing $c'$, where $p$ does not lie on the edge of $D^c$ connecting $c'$ to itself. 
Suppose further that all the diagrams are prime and the special form of $R$ at $D^{c,c'}$ is admissible.
Then in $D$ there are $P$--arcs across each of $c$ and $c'$.
\end{lemma}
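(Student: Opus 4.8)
The strategy is to use Lemma~\ref{oneuntwistlemma} to handle the crossing $c'$, promote what we learn to an admissible special form at $D^c$, and then re-run the argument of Lemma~\ref{oneuntwistlemma} to handle $c$. First I would apply Lemma~\ref{oneuntwistlemma} to the pair $(D^c, D^{c,c'})$, with $c'$ in the role of the crossing that is untwisted: the hypotheses are precisely what is given here, namely that $D^c$ and $D^{c,c'}$ are prime, that $D^{c,c'}$ is reduced, and that the special form of $R$ at $D^{c,c'}$ is admissible (so that $R$ is minimal genus). This produces a $P$--arc across $c'$ in $D^c$. By Remark~\ref{untwistremark}, the $a$--reduction of $D^c$ at $c'$ that yields $D^{c,c'}$ then merely deletes this $P$--arc and leaves all other $P$--arcs of $R$ on $D^c$ unchanged; since $p$ does not lie on the loop edge of $c'$, the $P$--arcs of $R$ near $p$ in $D^c$ coincide with those near $p'$ in $D^{c,c'}$.

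Second, I would check that the special form of $R$ at $D^c$ is itself admissible. The admissible form at $D^{c,c'}$ carries a coherent set of flype circles; after a small isotopy these avoid the kink at $c'$, so they remain a coherent set of flype circles in $D^c$, and $c'$, being disjoint from all of them, is a non--flype crossing. It then remains to observe that the $P$--arc across $c'$ lies on the side of $c'$ required by the definition of admissibility --- on the negative side when $c'$ is on the positive side of the flype circles, and vice versa. This is a local statement at the monogon of the kink of $c'$: the only $P$--arc across $c'$ is the one cutting off the white region opposite that monogon, and inspection shows it sits on the correct side. Hence $R$ is in admissible special form at $D^c$.

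Third, I would obtain the $P$--arc across $c$ in $D$ by running the proof of Lemma~\ref{oneuntwistlemma} for the pair $(D, D^c)$, recalling that $D^c$ is the $a$--reduction of $D$ at $c$. Supposing there is no $P$--arc across $c$ in $D$, one first flypes away the flype circles of the admissible form at $D^c$ while fixing $p$, reducing to the case of no flype circles exactly as in the opening of that proof; one then builds up the local structure of $D^c$ around the relevant black region as in Figures~\ref{reducingpic3}--\ref{reducingpic7}. The difference from Lemma~\ref{oneuntwistlemma} is that $D^c$ is not reduced, as it carries the kink at $c'$, so one cannot quote reducedness verbatim at the final step. This is where the hypothesis that $p$ avoids the loop edge of $c'$ enters: it keeps the kink of $c'$ away from the chain of crossings constructed around that black region, so that the diagram produced is a genuine non--trivial connected sum, contradicting the primeness of $D^c$. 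Thus there is a $P$--arc across $c$ in $D$, and then, by Remark~\ref{untwistremark}, the $a$--reduction $D\to D^c$ at $c$ removes only that arc, so the $P$--arc across $c'$ found in the first step is also a $P$--arc across $c'$ in $D$.

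The main obstacle is precisely this last step: controlling the extra kink at $c'$ inside the non-reduced diagram $D^c$ so that the non-primeness conclusion of the Lemma~\ref{oneuntwistlemma} argument survives. Everything else --- the invocation of Lemma~\ref{oneuntwistlemma} for $c'$ and the promotion of the special form at $D^c$ to an admissible one --- is bookkeeping, and the hypothesis on $p$ is exactly the technical input that makes both that bookkeeping and the final contradiction go through.
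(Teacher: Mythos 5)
Your first step (applying Lemma~\ref{oneuntwistlemma} to the pair $(D^c,D^{c,c'})$ to get a $P$--arc across $c'$ in $D^c$) matches the paper, but both steps you build on it contain genuine gaps. The claimed admissibility of the special form at $D^c$ does not follow from what you have: Lemma~\ref{oneuntwistlemma} only gives the \emph{existence} of a $P$--arc across $c'$, with no control over which side of $c'$ it lies on, whereas admissibility demands that it lie on the negative side of $c'$ when $c'$ is on the positive side of the inherited flype circles, and on the positive side otherwise. That is a global condition depending on where $c'$ sits relative to the flype circles, not something decided by inspecting the kink; the proof of Proposition~\ref{admissibleprop1} shows precisely this sidedness can fail when passing back from a reduced diagram, and is repaired there only by introducing a \emph{new} flype circle through the offending crossing and its $P$--arc. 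So "the special form of $R$ at $D^c$ is admissible with the same flype circles" is unproven (and not something the paper ever asserts or needs).

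The more serious gap is in your re-run of the Lemma~\ref{oneuntwistlemma} argument for $(D,D^c)$. Its endgame reads "because $D^c$ is reduced and alternating, this contradicts that it is prime"; with the kink at $c'$ present in $D^c$, the decomposing circle exhibited by the Figure~\ref{reducingpic7}--type structure may have nothing but that kink on one side, in which case primeness of $D^c$ is not contradicted. The hypothesis that $p$ avoids the loop edge of $c'$ constrains only where $p$ sits, not where the chain of crossings built around the black region $r$ wanders, so it does not "keep the kink away" as you assert, and no argument is offered for that claim. Since your final step (transferring the $P$--arc across $c'$ back to $D$ via Remark~\ref{untwistremark}) presupposes the $P$--arc across $c$, the whole conclusion collapses with this step. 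The paper avoids both problems by reversing the order: assuming there is no $P$--arc across $c'$ in $D$, it shows $c$ and $c'$ share a white region of $D$ in one of the two patterns of Figure~\ref{reducingpic8}, rules out the first pattern using the connectivity constraint of Figure~\ref{reducingpic3}, observes that no flype circle of the admissible form at $D^{c,c'}$ can separate $c$ and $c'$ so one may assume there are no flype circles, and then builds the structure of Figure~\ref{reducingpic9} inside the genuine diagram $D$, contradicting primeness of $D$ itself; only after the $P$--arc across $c'$ in $D$ is in hand does untwisting $c'$ become harmless, so that Lemma~\ref{oneuntwistlemma} can be invoked to produce the $P$--arc across $c$.
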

\begin{proof}
By Lemma \ref{oneuntwistlemma} there is a $P$--arc across $c'$ in $D^c$.
Suppose there is no $P$--arc across $c'$ in $D$. Then $c$ and $c'$ lie on the same white region of $D$, and the $P$--arcs around them are connected in one of the patterns shown in Figure \ref{reducingpic8}.
\begin{figure}[htb]
\centering
(a)
\input{pictexfiles/reducingpic8a}
(b)
\input{pictexfiles/reducingpic8b}
\caption{\label{reducingpic8}}
\end{figure}
In the first case, the $P$--arcs around $c'$ are not connected as in Figure \ref{reducingpic3}. Therefore the situation in Figure \ref{reducingpic8}b occurs.

Consider the set of flype circles in $D^{c,c'}$ that show that the special form of $R$ is admissible. None of these can separate $c$ and $c'$ in $D$. Thus, as in the proof of Lemma \ref{oneuntwistlemma}, we may assume that there are no flype circles needed and every crossing in $D$ has a $P$--arc across it. Further following the proof of Lemma \ref{oneuntwistlemma} shows that $D$ has the structure shown in Figure \ref{reducingpic9}, contradicting that it is prime.
\begin{figure}[htb]
\centering
\input{pictexfiles/reducingpic9}
\caption{\label{reducingpic9}}
\end{figure}

Thus there is a $P$--arc across $c'$ in $D$. This means that untwisting $c'$ has no impact on the rest of the picture, so Lemma \ref{oneuntwistlemma} gives that there is a $P$--arc across $c$ in $D$.
\end{proof}

\begin{lemma}\label{multiuntwistlemma}
Suppose $p$ lies on an edge of $D^c$ connecting a crossing $c'$ to itself, so $c'$ can be removed by a type I Reidemeister move.
Suppose the special form of $R$ at $D^c$ has a $P$--arc across $c'$. Then there is a $P$--arc across $c$ in $D$.
\end{lemma}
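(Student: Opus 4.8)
The plan is to argue by contradiction, in the spirit of Lemmas~\ref{oneuntwistlemma} and~\ref{twountwistlemma}. Suppose there is no $P$--arc across $c$ in $D$. As in the proof of Lemma~\ref{oneuntwistlemma}, if either $a$--reducing or $b$--reducing $D$ at $c$ created an annulus then there would already be a $P$--arc across $c$ in $D$ (Figure~\ref{reducingpic2}); so under the contradiction hypothesis no annulus is formed, the arcs of $R\cap\sphere$ near $c$ are connected as in Figure~\ref{reducingpic3}, and after $a$--reducing to $D^c$ the set $R\cap\sphere$ near $p$ is as in Figure~\ref{reducingpic1}b, with no closed curve appearing near $p$.

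Next I would record the local structure forced by the hypotheses on $D^c$. Write $e'$ for the edge of $D^c$ joining $c'$ to itself, so $p\in e'$; since $c'$ is removable by a type~I Reidemeister move, $e'$ cobounds with $c'$ an empty monogon $M$. The $P$--arc across $c'$ has an endpoint at the midpoint $m'$ of $e'$, so $e'$ is adjacent to a white region; as $M$ is a monogon this forces $M$ to be black, the region $M'$ on the other side of $e'$ to be white, and the $P$--arc across $c'$ to lie in $M'$. Passing back to $D$, the edge $e'$ becomes two edges $u,v$, each running from $c'$ to $c$, together with the loop $e_c$ of the kink at $c$. Were the monogon cobounded by $e_c$ white, the $P$--arc at the midpoint of $e_c$ would have nowhere to lie; so that monogon is black, the kink at $c$ bulges into $M'$, the region $M$ becomes an empty black bigon bounded by $u$ and $v$, and the three $P$--arcs at the midpoints of $u$, $v$ and $e_c$ all lie in $M'$, appearing consecutively along $\partial M'$.

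To finish I would compare this with Figure~\ref{reducingpic3}, which specifies exactly how the $P$--arcs at the midpoints of $u$, $v$ and $e_c$ join onto arcs of $D$ to form the boundary curves of the discs of $R\cap\bbb{a}$ and $R\cap\bbb{b}$. Since $u$ and $v$ both terminate at $c'$, and the midpoint $m'$ of $e'$ --- now the midpoint of one of $u$, $v$ --- is an endpoint of the $P$--arc across $c'$, which runs from that midpoint towards $c'$, the configuration of Figure~\ref{reducingpic3} cannot be realised while keeping the $P$--arcs disjoint and correctly matched to the points of $P$; equivalently, continuing the analysis exactly as in Lemmas~\ref{oneuntwistlemma} and~\ref{twountwistlemma} would build a picture of $D^c$ in which $p$ does not lie on a loop edge of $c'$. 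Either way this yields the required contradiction. I expect the main obstacle to be precisely this last step: carrying out the bookkeeping that matches the combinatorial data of Figure~\ref{reducingpic3} against the forced configuration near $c$ and $c'$, while keeping careful track throughout of which sides of the various edges are positive or negative and which regions are black or white.
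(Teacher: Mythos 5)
Your overall strategy matches the paper's: argue by contradiction, note that the annulus case would already give a $P$--arc across $c$ (so the curves of $R\cap\bbb{a}$ and $R\cap\bbb{b}$ near $c$ must connect as in Figure \ref{reducingpic3}), and then play this off against the hypothesised $P$--arc across $c'$ in $D^c$. The local colour analysis you add (the monogon at $c'$ is black, the $P$--arc across $c'$ runs from the midpoint of $e'$ into the white region outside it, and in $D$ the three $P$--arc endpoints on $u$, $v$, $e_c$ all lie in that white region) is broadly sound, though your justification that $M$ is black is somewhat circular -- in a special alternating diagram the monogon of a nugatory crossing is automatically a black (Seifert-disc) region, and a white monogon is anyway excluded because its $P$--arc would need both endpoints at the same point of $P$.

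However, there is a genuine gap exactly at the decisive step, and you flag it yourself: you assert that ``the configuration of Figure \ref{reducingpic3} cannot be realised while keeping the $P$--arcs disjoint and correctly matched'' and defer the bookkeeping, but that bookkeeping \emph{is} the proof of the lemma. In the paper, the conclusion is obtained by actually tracing the connectivity: assuming no $P$--arc across $c$, Figure \ref{reducingpic3} forces the arcs of $R\cap\bbb{a}$ on $D$ to connect as in Figure \ref{reducingpic10}a, so that the arc $\sigma$ passing through the white region at $c'$ does not consist entirely of $P$--arcs; since the $P$--arcs of $D$ away from $c$ are unchanged in $D^c$, this is directly incompatible with the single $P$--arc across $c'$ required in $D^c$ (Figure \ref{reducingpic10}b). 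Your write-up never exhibits this incompatibility -- it offers two vague alternatives (``cannot be realised'' or ``continuing the analysis would build a picture in which $p$ does not lie on a loop edge of $c'$'') without carrying either out, so the contradiction, and hence the lemma, is not actually established. To complete the argument you need to follow the boundary curves of the discs of $R\cap\bbb{a}$ through the white region containing the three $P$--arc endpoints, as dictated by Figure \ref{reducingpic3}, and check explicitly that the arc which would have to be the $P$--arc across $c'$ in $D^c$ instead runs through $\sigma$ and so is not a $P$--arc.
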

\begin{proof}
Suppose otherwise. From above (Figure \ref{reducingpic3}) the arcs of $R\cap \bbb{a}$ on $D$ must connect as in Figure \ref{reducingpic10}a, where $\sigma$ does not consist entirely of $P$--arcs. This contradicts that there is a $P$--arc across $c'$ in $D^c$ (see Figure \ref{reducingpic10}b).
\begin{figure}[htb]
\centering
(a)
\input{pictexfiles/reducingpic10a}
(b)
\input{pictexfiles/reducingpic10b}
\caption{\label{reducingpic10}}
\end{figure}
\end{proof}

\subsection{Adjacent surfaces can be put into admissible special form}

\begin{proposition}\label{admissibleprop1}
Let $D$ be a prime, reduced, special alternating diagram of a link $L$. Let $R$ be the (minimal genus) Seifert surface for $L$ given by applying Seifert's algorithm to $D$. Let $R'$ be a minimal genus Seifert surface for $D$ disjoint from $R$, given in special form. Then this special form is admissible.
\end{proposition}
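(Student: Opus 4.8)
The plan is to induct on the number $n$ of crossings of $D$. The base case, where the diagram is small enough that $R'$ is forced to equal $R$ itself (for instance when $L$ is the unknot), is immediate: $R$ is in admissible special form with the empty set of flype circles.

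For the inductive step I would first use Lemma~\ref{cutlemma} to choose a cuttable white region $r$; by the observation at the start of this section, some $P$--arc of $R'$ in $r$ runs across a crossing $c$. Cutting $D$ and $R'$ along $c$ as in Figure~\ref{regionpic1} --- which, since a $P$--arc crosses $c$, merely erases that arc and disturbs nothing else, the cut being innocuous in this case (cf.\ the discussion preceding Lemma~\ref{oneuntwistlemma} and Remark~\ref{untwistremark}) --- produces a special alternating diagram $D_1$ with $n-1$ crossings, prime because $r$ is cuttable, taking $R'$ to a surface $R_1'$ in special form and $R$ to the surface $R_1$ given by Seifert's algorithm on $D_1$, still disjoint. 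Since $\chi(R_1)=\chi(R)+1$ and $\chi(R_1')=\chi(R')+1$, and $R_1$ is minimal genus because $D_1$ is alternating, $R_1'$ is minimal genus for the link of $D_1$. If $D_1$ is not reduced I would strip off its nugatory crossings one at a time, reaching a reduced, prime, special alternating diagram $D_0$ with at most $n-1$ crossings and carrying $R_1'$ to a minimal genus surface $R_0'$ in special form at $D_0$, disjoint from the surface $R_0$ given by Seifert's algorithm on $D_0$ (removing a nugatory crossing changes neither the link nor, up to isotopy, either surface). The inductive hypothesis then applies to $D_0$ (componentwise, should the diagram have become split), so $R_0'$ is in admissible special form; fix coherent flype circles witnessing this.

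It remains to transport this admissible structure back up to $D$. Here I would invoke Lemmas~\ref{oneuntwistlemma}, \ref{twountwistlemma} and~\ref{multiuntwistlemma} --- choosing between them according to whether the nugatory loop at hand contains the image of $c$, and using that every diagram in the reduction chain is prime --- to conclude that each crossing deleted in passing from $D_1$ to $D_0$ carried a $P$--arc of $R_1'$. Hence every reduction step was just the deletion of a $P$--arc across a nugatory crossing, so the coherent flype circles on $D_0$ pull back, one step at a time, to coherent flype circles on $D_1$ realising $R_1'$ in admissible special form. Reinstating $c$ together with its $P$--arc then completes the argument: the flype circles on $D_1$ avoid the image point $p$ of $c$, so they extend over $D$ (routed around $c$), remain coherent, and leave $c$ a non-flype crossing bearing a $P$--arc on the side determined by which side of the flype circles $c$ lies on --- exactly the admissibility conditions. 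Thus the special form of $R'$ at $D$ is admissible.

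The hard part will be the bookkeeping that makes all of this go through. I expect the delicate points to be: verifying that the cut of Figure~\ref{regionpic1}, performed across a crossing already carrying a $P$--arc, is genuinely innocuous --- that no discs of $R'$ in $\bbb{a}$ or $\bbb{b}$ are fused into an annulus, as in the analysis preceding Lemma~\ref{oneuntwistlemma}; confirming that primeness (and, componentwise, non-splitness) survives every cut and every nugatory removal, so that Lemma~\ref{cutlemma}, the reduction lemmas, and the inductive hypothesis all apply; organising the case split between Lemma~\ref{multiuntwistlemma} and Lemmas~\ref{oneuntwistlemma}--\ref{twountwistlemma}, and checking that only the configurations those lemmas treat can arise --- in particular that the reductions terminate within the bounded number of steps those lemmas allow; and showing that each reinstated crossing really does slot into a valid admissible description, never being forced to become a flype crossing and never destroying coherence of the flype circles.
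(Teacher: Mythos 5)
Your overall strategy is the same as the paper's: induct on the number of crossings, use Lemma~\ref{cutlemma} to find a cuttable white region, cut along a crossing $c$ carrying a $P$--arc, split into cases according to whether the cut diagram is reduced, and in the non-reduced case run Lemmas~\ref{oneuntwistlemma}, \ref{twountwistlemma} and~\ref{multiuntwistlemma} to see that every nugatory crossing carried a $P$--arc, so the reductions do not disturb the rest of the special form and the inductive hypothesis can be applied to the reduced diagram. Up to that point the sketch is sound.

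The genuine gap is in the final transport step, where you claim that the flype circles pulled back from the smaller diagram, together with the $P$--arc across the reinstated crossing $c$, ``exactly'' satisfy the admissibility conditions, with every reinstated crossing a non-flype crossing and coherence automatic. This is not true in general. Reinstating $c$ (and, in the non-reduced case, the line(s) of nugatory crossings) can insert crossings \emph{between} some other crossing $c'$ of the cut-down diagram and the $P$--arc that was across $c'$ there; in $D$ that $P$--arc is then no longer across $c'$, so the inherited description simply fails at $c'$. The paper has to deal with exactly this: in the reduced case it analyses the resulting local configuration (Figure~\ref{inductionpic1}) and uses an endpoint count in each white region to show the $P$--arc across $c$ itself lands on the required side (a point you also assert without argument); in the non-reduced case it shows there is at most one such crossing $c_{\phi}$, and when it exists one must introduce a \emph{new} flype circle in $D$, whose flype crossing is $c_{\phi}$ and whose flype arc is the displaced $P$--arc, and then check this new circle is coherent with the pulled-back ones (it lies on their negative side). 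So admissibility at $D$ is not in general witnessed by the inherited flype circles alone, and your expectation that no crossing is ``ever forced to become a flype crossing'' is precisely the case your sketch cannot handle without this extra construction.
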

\begin{proof}
First suppose $D$ has no crossings. Then $L$ is the unknot, $R'=R$ and the special form of $R'$ is admissible with no $P$--arcs or flype circles.

Now suppose the result holds for any diagram with at most $n-1$ crossings, and that $D$ has $n$ crossings. 
By Lemma \ref{cutlemma}, there is a cuttable region $r$ of $D$. Choose a crossing $c$ of $r$ with a $P$--arc across it. Cut along this $P$--arc and $c$ as in Figure \ref{regionpic1} to give a new diagram $D_c$ and a new minimal genus Seifert surface $R'_c$ in special form at $D_c$. Since $r$ is cuttable, $D_c$ is prime. It is also special and alternating, and has $n-1$ crossings. However, it may not be reduced.

Suppose $D_c$ is reduced. Then the inductive hypothesis holds, so the special form of $R'_c$ is admissible. Take a flype circle $\phi$ in $D_c$, and consider $\phi$ in $D$. The flype arc of $\phi$ is a $P$--arc in $D_c$, so also is in $D$. This means that $c$ and the $P$--arc across it lie on one side of $\phi$, so $\phi$ is a flype circle in $D$. Thus the flype circles in $D_c$ give a set of flype circles in $D$. These flype circles inherit coherence from $D_c$.
Suppose $c$ does not lie between a crossing $c'$ and the $P$--arc that crosses it in $D_c$. Then each crossing in $D$ that is not a flype crossing or $c$ has a $P$--arc across it on the required side. By counting the number of endpoints of $P$--arcs in each white region of $D$ we find that the $P$--arc across $c$ also lies on the required side. Thus the special form of $R'$ is admissible.
Instead suppose $c$ does lie between a crossing $c'$ and the $P$--arc that crosses it in $D_c$. We see that $c$ and $c'$ are as show in Figure \ref{inductionpic1}. Again, the special form of $R'$ at $D$ is admissible.
\begin{figure}[htbp]
\centering
\input{pictexfiles/inductionpic1}
\caption{\label{inductionpic1}}
\end{figure}

Now suppose $D_c$ is not reduced. Take a simple closed curve $\rho$ that shows that $D_c$ is not reduced. That is, $\rho$ passes through a single crossing of $D_c$ and otherwise lies in a white regions of $D_c$. Since $D$ is reduced, $\rho$ must pass through $c$ in $D$, giving $D$ the form shown in Figure \ref{inductionpic2}.
\begin{figure}[htbp]
\centering
\input{pictexfiles/inductionpic2}
\caption{\label{inductionpic2}}
\end{figure}
As $D$ is reduced and $D_c$ is prime, either $A$ or $B$ 
consists of a single line of crossings. Thus cutting along $c$ leaves one or two lines of nugatory crossings joined by black bigons in a single white region of what is otherwise a reduced diagram (for example as shown in Figure \ref{inductionpic3}). 
\begin{figure}[htbp]
\centering
\input{pictexfiles/inductionpic3}
\caption{\label{inductionpic3}}
\end{figure}

Repeatedly $a$--reduce the diagram until only one crossing from each of these lines remain, giving a new diagram $D_1$. Let $D_0$ be the diagram given by $a$--reducing the final one or two nugatory crossings of $D_1$. Then, by the inductive hypothesis, the special form of $R'$ at $D_0$ is admissible, so the hypotheses of either Lemma \ref{oneuntwistlemma} or Lemma \ref{twountwistlemma} hold in relation to $D_1$. One of these lemmas followed by repeated application of Lemma \ref{multiuntwistlemma} gives that the special form of $R'_c$ at $D_c$ has a $P$--arc across every nugatory crossing. Thus $a$--reducing $D_c$ does not affect the other $P$--arcs, and every flype circle in $D_0$ is a flype circle in $D$. 

Let $c'$ be a crossing of $D_0$. If $c'$ is a flype crossing in $D_0$ and in $D$ 
then the flype arc of the flype circle is a $P$--arc in $D$. 
Suppose $c'$ is not a flype crossing. Then there is a $P$--arc $\rho$ across it on the required side in $D_0$. Suppose that $\rho$ is not across $c'$ in $D$. Then at least one of the nugatory crossings of $D_1$ lies between $c'$ and $\rho$. This then means that in fact $c$ and all of the nugatory crossings of $D_c$ lie between $c'$ and $\rho$. Hence there is at most one crossing in $D_0$ for which this occurs.

If no such crossing exists, we may again check that, for each crossing of $D$ that is not in $D_0$, the $P$--arc across it lies on the required side. 
Suppose there is such a crossing $c_{\phi}$. Then $c_{\phi}$ and the $P$--arc $\rho_{\phi}$ across it in $D_0$ form a new flype circle $\phi$ in $D$. Without loss of generality, $c$ lies on the positive side of $\phi$. It is then easily checked that every crossing on the positive side of $\phi$ in $D$ has a $P$--arc across it on the negative side. Since we know that $\rho_{\phi}$ lies on the positive side of $c_\phi$ in $D_0$, the crossing $c_{\phi}$ (and hence also the flype circle $\phi$) lies on the negative side of the flype circles in $D_0$. Hence these together with $\phi$ give a set of flype circles in $D$ proving that the special form of $R'$ at $D$ is admissible.
\end{proof}

\begin{corollary}[\cite{MR1664976} Theorem 1.1]
Let $L$ be a prime, special alternating link with a reduced, special alternating digram $D$. Let $R$ be a minimal genus Seifert surface for $L$. Then $R$ is given by doing a finite sequence of flypes on $D$ and then applying Seifert's algorithm to the resulting diagram.
\end{corollary}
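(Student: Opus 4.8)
The plan is to bootstrap from Proposition~\ref{admissibleprop1} and Lemma~\ref{flypeslemma1} using the connectedness of $\ms(L)$, which is part of the content of Theorem~\ref{contractiblethm}. Let $R_0$ denote the vertex of $\ms(L)$ represented by the Seifert surface obtained by applying Seifert's algorithm to $D$; this is a minimal genus surface by \cite{MR860665} Theorem~4, so it is indeed a vertex of $\ms(L)$. Given an arbitrary minimal genus Seifert surface $R$, regard it as a vertex of $\ms(L)$. Since $\ms(L)$ is connected, $\dist_{\ms(L)}(R,R_0)$ is finite, and I would induct on this distance.

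For the base case $\dist_{\ms(L)}(R,R_0)=0$, the surface $R$ is ambient isotopic to $R_0$, which is obtained from $D$ by the empty sequence of flypes followed by Seifert's algorithm, so there is nothing to prove. For the inductive step, assume the statement for every minimal genus Seifert surface at distance at most $k-1$ from $R_0$, and suppose $\dist_{\ms(L)}(R,R_0)=k\geq 1$. Choose a minimal genus surface $S$ with $\dist_{\ms(L)}(S,R_0)=k-1$ and $\dist_{\ms(L)}(R,S)=1$. By the inductive hypothesis, $S$ is obtained by performing a finite sequence of flypes on $D$ to produce a diagram $D'$ and then applying Seifert's algorithm; in particular $S$ is the Seifert surface associated with $D'$. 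It is straightforward that flyping preserves the class of prime, reduced, special alternating diagrams of a fixed link, so $D'$ is again such a diagram of $L$.

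Now $\dist_{\ms(L)}(R,S)=1$, so $R$ and $S$ may be realised disjointly; since $R$ has minimal genus it is incompressible, and the analysis of Section~\ref{specialsection} (applied with $D'$ and $S$ in place of $D$ and $R$) lets us isotope $R$ into special form at $D'$. Proposition~\ref{admissibleprop1}, applied to $D'$ and $R$, then shows this special form is admissible. Finally Lemma~\ref{flypeslemma1} identifies $R$ with the surface obtained by performing the flypes recorded in this admissible form on $D'$ and applying Seifert's algorithm to the result. Concatenating these flypes with the ones carrying $D$ to $D'$ exhibits $R$ in the desired form, completing the induction.

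There is no genuine obstacle here beyond the machinery already developed: the two points needing a line of justification are that the class of prime, reduced, special alternating diagrams is closed under flyping (so Proposition~\ref{admissibleprop1} can be re-applied at $D'$) and that $\ms(L)$ is connected, the latter being supplied by Theorem~\ref{contractiblethm}. All of the substantive work lives in Proposition~\ref{admissibleprop1}, Lemma~\ref{flypeslemma1}, and the reduction to special form in Section~\ref{specialsection}; the corollary is the short distance-induction packaging these results.
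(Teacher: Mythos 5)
Your proof is correct and follows essentially the same route as the paper, whose proof is stated in one line as a combination of Proposition \ref{admissibleprop1}, Lemma \ref{flypeslemma1} and the connectedness of $\ms(L)$; your induction on $\dist_{\ms(L)}(R,R_0)$ is exactly how those ingredients are meant to be combined.
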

\begin{proof}
This is a combination of Proposition \ref{admissibleprop1}, Lemma \ref{flypeslemma1} and the fact that $\ms(L)$ is connected.
\end{proof}

\begin{corollary}
Every minimal genus Seifert surface for $L$ is connected.
\end{corollary}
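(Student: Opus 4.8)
The plan is to obtain this at once from the previous corollary (Theorem 1.1 of \cite{MR1664976}). By that result, every minimal genus Seifert surface $R$ for $L$ arises by performing a finite sequence of flypes on $D$ and then applying Seifert's algorithm to the resulting diagram $D'$. Each flype takes a special alternating diagram of $L$ to another special alternating diagram of $L$, so $D'$ is itself a special alternating diagram of $L$. It therefore suffices to show that the surface produced by Seifert's algorithm from $D'$ is connected.

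Since $L$ is non-split (an assumption in force throughout the paper), the diagram $D'$ is connected: a disconnected diagram would present $L$ as a split link. I would then invoke the standard fact that Seifert's algorithm applied to a connected diagram produces a connected surface. In the present setting this is especially transparent, since $D'$ is special and hence $R$ is the black checkerboard surface of $D'$, which deformation retracts onto the black Tait graph $\Gamma$ (one vertex per black region, one edge per crossing); and the Tait graph of a connected link diagram is connected. Alternatively, and without using that $D'$ is special, one can argue directly: $R$ deformation retracts onto the graph $\Gamma$ with one vertex for each Seifert circle of $D'$ and one edge for each crossing, the edge at a crossing joining the two Seifert circles incident there. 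The $1$--complex obtained from the underlying graph of $D'$ by replacing each crossing by its two Seifert-smoothing arcs together with a short band-core arc is connected, because the replacement at a crossing keeps the four local strand-ends mutually connected, exactly as the crossing point does in $D'$; collapsing each (connected) Seifert circle to a point then leaves $\Gamma$, hence $R$, connected.

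There is no real obstacle here. The only point that needs a moment's care is the comparison just made between the connectedness of $D'$ and that of $\Gamma$, i.e.\ checking that the local re-routing of strands at a crossing in Seifert's algorithm does not destroy connectedness; this is a routine local verification.
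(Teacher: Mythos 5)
Your proof is correct and follows exactly the route the paper intends: the statement is an immediate consequence of the preceding corollary, since every minimal genus Seifert surface arises from Seifert's algorithm on a (necessarily connected, because $L$ is non-split) diagram obtained from $D$ by flypes, and Seifert's algorithm on a connected diagram yields a connected surface. The paper leaves this deduction implicit, and your filling-in of the standard connectedness fact for Seifert's algorithm is the routine verification needed.
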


\section{Distinguishing surfaces in special form}\label{disjointnesssection}

Our current aim is to prove Theorem \ref{isomorphismtheorem}.
To do so, we wish to show that the map $\mathcal{A}\colon\V(\kc{D})\to\V(\ms(L))$ extends to an isomorphism of the simplicial complexes. 
By Proposition \ref{edgestoedgesprop} we know that $\mathcal{A}$ can be extended to a map on the edges of the complexes. Since $\ms(L)$ is flag, this gives a simplicial map $\mathcal{A}\colon\kc{D}\to\ms(L)$. 
We will show that $\mathcal{A}$ is a local isomorphism at each point, and so is a covering map. As $\ms(L)$ is simply connected, $\mathcal{A}$ is therefore an isomorphism.

Because both $\ms(L)$ and $\kc{D}$ are flag, we need only show that $\mathcal{A}$ acts as required on the 1--skeleton of $\kc{D}$. That is, we will consider the 1--skeleton of the closure of the star of a vertex $v$ in $\kc{D}$, and show that this is mapped bijectively under $\mathcal{A}$ to the corresponding subgraph of $\ms(L)$. From Proposition \ref{admissibleprop1}, together with Proposition \ref{kdmslprop2}, we already know that this is a surjection. In this section we will show that the distances between the vertices are not reduced. This will complete the proof of Theorem \ref{isomorphismtheorem}.

\subsection{Product regions and product discs}

\begin{definition}[\cite{2010arXiv1004.4168P} Section 3]
Let $M$ be a connected 3--manifold, and let $S,S'$ be (possibly disconnected) surfaces properly embedded in $M$.
Call $S$ and $S'$ \textit{almost transverse} if, given a component $S_0$ of $S$ and a component $S'_0$ of $S'$, they either coincide or intersect transversely.
Say $S$ and $S'$ \textit{bound a product region} if the following holds. There is a compact surface $T$, a finite collection $\rho\subseteq \partial T$ of arcs and simple closed curves and a map of $N=(T\times\intvl)/\sim$ into $M$ that is an embedding on the interior of $N$ and has the following properties. 
\begin{itemize}
 \item $T\times\{0\}=S\cap N$ and $T\times\{1\}=S'\cap N$.
 \item $\partial N\setminus (T\times\partial\intvl)\subseteq\partial M$.
\end{itemize}
Here $\sim$ collapses $\{x\}\times\intvl$ to a point for each $x\in\rho$.
Say $S$ and $S'$ have \textit{simplified intersection} if they do not bound a product region.
\end{definition}

\begin{proposition}[\cite{MR1315011} Proposition 4.8]\label{productregionsprop}
Let $M$ be a Haken manifold with incompressible boundary.
Suppose $S,S'$ are incompressible, $\partial$--incompressible surfaces properly embedded in $M$ in general position. Suppose further either that $S\cap S'\neq\emptyset$ but $S$ can be isotoped to be disjoint from $S'$, or that $S,S'$ are isotopic. Then $S,S'$ bound a product region.
\end{proposition}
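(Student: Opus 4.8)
\emph{Proof proposal.} The plan is to split according to whether $S$ meets $S'$, producing a "reducing" product region (a ball or half-ball squeezed between the two surfaces) in the intersecting case, and recognising the entire region between them as a product in the disjoint case. A couple of reductions first: a Haken manifold is irreducible, and $\partial M$ is incompressible by hypothesis, so $M$ is irreducible and $\partial$-irreducible; and it suffices to work with a single component of each of $S$ and $S'$ (coinciding components contribute a degenerate product region, and otherwise we may keep them in general position). Put $S,S'$ in general position, so that $S\cap S'$ is a properly embedded $1$-manifold: circles in $\Int M$ and arcs with endpoints in $\partial M$.

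Suppose $S\cap S'\neq\emptyset$. If some circle of $S\cap S'$ bounds a disc in $S$ or in $S'$, take such a disc $D$ innermost on the surface containing it, say $D\subseteq S'$, so $\Int D\cap S=\emptyset$; incompressibility of $S$ makes $\partial D$ bound a disc $E\subseteq S$, and irreducibility of $M$ makes $D\cup E$ bound a ball $B$. After innermost/outermost adjustments one arranges that $\Int B$ is disjoint from $S\cup S'$, and then $B$ is a product region in the sense of the definition, with $T$ a disc. Similarly, if some arc of $S\cap S'$ is $\partial$-parallel in $S$ or in $S'$, then $\partial$-incompressibility of the other surface, combined with incompressibility of $\partial M$ and irreducibility of $M$, yields a half-ball between the surfaces, again a product region with $T$ a disc. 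If neither configuration is available, every intersection circle is essential on both $S$ and $S'$ and every intersection arc is essential on both, which is exactly the statement that $S$ and $S'$ are in minimal position within their isotopy classes; standard position arguments then show that incompressible, $\partial$-incompressible surfaces in minimal position cannot be isotopic unless disjoint, and cannot be isotoped off one another unless already disjoint. Both are excluded by hypothesis, so a product region always exists when $S\cap S'\neq\emptyset$.

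Now suppose $S\cap S'=\emptyset$, which under the hypotheses forces $S$ and $S'$ to be isotopic. Cut $M$ along $S$ to obtain a manifold $N$ with two new boundary copies $S_{+},S_{-}$ of $S$; since $S$ and $\partial M$ are incompressible, $N$ is irreducible with incompressible boundary, and $S'$ is incompressible and $\partial$-incompressible in $N$. An ambient isotopy of $M$ carrying $S$ onto $S'$ can be adjusted so that, once one has identified on which side of $S$ the surface $S'$ lies and hence which component of the complement of $S\cup S'$ is relevant, $S'$ is homotopic in $N$ to the copy $S_{+}$. Waldhausen's criterion for boundary-parallelism in Haken manifolds then gives that $S'$ cobounds a product region with $S_{+}$ in $N$; re-gluing, this is a product region between $S$ and $S'$ in $M$, with $T=S$ and the identification $\sim$ along $\partial T$ absorbing the part of its frontier that lies in $\partial M$.

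The step I expect to be the main obstacle is the disjoint–isotopic case: making rigorous the passage from an ambient isotopy of $M$ to a homotopy in the cut-open manifold $N$ — in particular the choice of side and of the right complementary component — and applying Waldhausen's parallelism theorem with precisely its hypotheses. A secondary, more routine but still delicate matter is the minimal-position lemma invoked in the intersecting case (that "all intersection circles and arcs essential on both surfaces" rules out the surfaces being isotopic or one isotopic off the other), together with the bookkeeping needed to keep the interiors of the balls and half-balls produced there disjoint from $S\cup S'$.
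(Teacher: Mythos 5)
First, a remark on the comparison itself: the paper does not prove this proposition — it is quoted verbatim from the cited reference (\cite{MR1315011}, Proposition 4.8) and used as a black box — so there is no in-paper argument to measure your proposal against; it has to stand on its own. It does not. The decisive gap is in your intersecting case: you claim that if no circle of $S\cap S'$ bounds a disc on either surface and no arc of $S\cap S'$ is $\partial$--parallel on either surface, then $S$ and $S'$ are in minimal position and hence cannot be isotoped apart, contradicting the hypothesis. That implication is false. Essentiality of every intersection curve on both surfaces does not imply minimal position. For example, take two disjoint parallel copies of an incompressible torus (or of a minimal genus Seifert surface) and push a collar of one across the other along an essential annulus: the two surfaces now meet in two circles that are essential on both, there is no innermost compressing disc and no outermost $\partial$--parallel arc, yet the surfaces are isotopic and can be isotoped apart. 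The product region they bound is an annulus times an interval — exactly the kind of product region with $T$ not a disc that the paper's definition is designed to allow and that your innermost-disc/outermost-arc dichotomy cannot detect. So your argument only ever produces disc-type product regions, and in the genuinely essential situation it ``derives'' a contradiction that does not exist. The real content of the proposition is precisely that essential case, and the known proofs (least-area representatives in the style of Freedman--Hass--Scott, or a Waldhausen-type parallelism argument applied after cutting along one of the surfaces) do substantive work there that cannot be replaced by your reduction. Even in the inessential case your construction is not finished: the ball bounded by $D\cup E$ may meet both $S$ and $S'$ in its interior, so it is not immediately a product region in the sense of the definition, which requires $S\cap N=T\times\{0\}$ and $S'\cap N=T\times\{1\}$.

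Your disjoint/isotopic case is the right strategy (cut along $S$, show $S'$ is parallel to the boundary copy $S_{+}$, invoke Waldhausen), but as you yourself flag, the key step — converting an ambient isotopy of $M$ carrying $S$ to $S'$ into a homotopy of $S'$ into $S_{+}$ inside the cut-open manifold, with the correct choice of side — is exactly where that argument lives, and it is not carried out. That part is repairable along standard lines; the first case, as structured, is not, because it rests on a false equivalence between essentiality of intersection curves and minimal position.
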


\begin{remark}
If $M=\Sphere\setminus\nhd(L)$ for a non-split link $L$ other than the unknot then $M$ is Haken and $\partial$--irreducible. Furthermore, if $S,S'$ are minimal genus Seifert surfaces for $L$ then they are properly embedded, incompressible and $\partial$--incompressible. 
\end{remark}

\begin{definition}
A \textit{sutured manifold} $(M,s)$ is a compact, orientable 3--manifold $M$, together with a finite set $s$ of disjoint simple closed curves on $\partial M$, called the \textit{sutures}. The sutures divide $\partial M$ into two (possibly disconnected) compact, oriented surfaces $S_+(M)$ and $S_-(M)$ such that $S_+(M)\cap S_-(M)=s$ and, if $\rho$ is a suture, $S_+(M)$ and $S_-(M)$ meet at $\rho$ with opposite orientations. 
\end{definition}

\begin{definition}
A \textit{product sutured manifold} is a sutured manifold $(M,s)$ that is homeomorphic to $S_+(M)\times[-1,1]$ with $s=\partial S_+(M)\times\{0\}$.
\end{definition}

\begin{remark}
A product region is a product sutured manifold, with sutures along the core curves of $(S\cap S')\cup \partial M$, although the orientations of the boundary of the sutured manifold may not agree with those of $S$ and $S'$.
\end{remark}

\begin{definition}
A disc $T$ properly embedded in a sutured manifold $(M, s)$ is a \textit{product disc} if $\partial T$ meets $s$ at exactly two points, where it crosses $s$ transversely.
\end{definition}

\begin{definition}
Let $(M, s)$ be a sutured manifold that contains a product disc $T$. Let $\rho$ be a simple arc on $T$ joining the two points of $\partial T \cap s$ and let $T \times [-1,1]$ be a product neighbourhood of $T$ in $M$. The sutured manifold $(M', s')$ obtained from $(M, s)$ by a \textit{product disc decomposition} along $T$ has $M' = M \setminus ( T \times(-1,1))$ and $s' = (s \cap M') \cup (\rho \times\{\pm 1\})$.
\end{definition}

\begin{remark}\label{productremark}
$M'$ is a product sutured manifold if and only if $M$ is.
\end{remark}

\begin{definition}
Say a product disc is \textit{inessential} if it is separating and the disc decomposition along it creates a component that is a 3--ball with a single suture. 
Otherwise, it is \textit{essential}. 
\end{definition}

\begin{remark}
If a product disc $T$ in a sutured manifold $M$ is inessential then $\partial T\cap S^+(M)$ is inessential in $S^+(M)$, and the same is true for $S^-(M)$.
\end{remark}

\begin{remark}\label{productdiscremark}
Suppose $T'$ is an incompressible surface properly embedded in $M$ with $\partial T'=s$. By an isotopy we can ensure that $T'\cap (T\times[-1,1])=\rho\times[-1,1]$. Then $T'\setminus(T\times(-1,1))$ is a surface properly embedded in $M'$ with boundary $s'$. 
\end{remark}

\begin{lemma}\label{productdiscslemma}
Let $(M,s)$ be a sutured manifold, and let $\rho$ be an essential arc properly embedded in $S^+(M)$. Let $T$ be a product disc in $M$ such that $\partial T$ meets $\rho$ exactly once and such that $\rho$ cannot be isotoped to be disjoint from $\partial T$. Let $(M',s')$ be the result of the product disc decomposition along $T$, and let $\rho_1,\rho_2$ be the two parts of $\rho$ in $M'$.
Suppose there is an essential product disc $S$ in $M$ with $\rho\subset\partial S$. Then there are essential product discs $S_1,S_2$ in $M'$ with $\rho_1\subset\partial S_1,\rho_2\subset\partial S_2$.
\end{lemma}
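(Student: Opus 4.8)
The plan is to isotope $S$ until it meets $T$ in a single arc, cut $S$ along that arc, and fold the two resulting discs onto the two copies of $T$ that appear in $\partial M'$.

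First I would set up notation: write $\partial S=\rho\cup\mu$, where $\mu\subseteq S^-(M)$ is the complementary arc of $\partial S$, and write $\partial T=\tau_+\cup\tau_-$ with $\tau_+=\partial T\cap S^+(M)$ and $\tau_-=\partial T\cap S^-(M)$. Since $\partial T$ meets $\rho$ in exactly one point $q$, and $\rho$ is all of $\partial S\cap S^+(M)$, the point $q$ is the only point of $\partial S\cap\tau_+$. After a general position isotopy, $S\cap T$ is a union of simple closed curves together with arcs, each arc having both endpoints on $\partial S\cap\partial T=\{q\}\cup(\mu\cap\tau_-)$ (both surfaces being properly embedded). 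I would remove the closed curves and the arcs with both endpoints in $\mu\cap\tau_-$ by the usual surgery of $S$ along an innermost disc of $T$; at each step one keeps the component of the resulting surface whose boundary still contains $\rho$, so that throughout $S$ stays a product disc with $\rho$ in its boundary and $|S\cap T|$ strictly decreases. No arc of $S\cap T$ can have both endpoints on $\tau_+$, since $\partial S\cap\tau_+=\{q\}$, and $S\cap T\neq\emptyset$ because $S$ crosses $T$ transversally near $q$ (as $\rho$ crosses $\tau_+$ there). Hence at the end $S\cap T$ is a single arc $a$ joining $q\in\rho$ to the unique remaining point $q'\in\mu\cap\tau_-$.

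Next I would choose the arc on $T$ used for the decomposition (the arc joining the two points of $\partial T\cap s$; the paper calls it $\rho$ in its definition, but I shall call it $\alpha$) so that $\alpha$ meets $a$ transversally in exactly one point. This is possible, and in fact forced, since the two points of $\partial T\cap s$ lie in different components of $T\setminus a$. Now cut $S$ along $a$; as $S$ is a disc this yields two discs, one whose boundary contains the sub-arc $\rho_1$ of $\rho$ and one whose boundary contains $\rho_2$. Push the first off $T$ onto the copy $T\times\{-1\}$ and the second onto $T\times\{+1\}$, obtaining disjoint properly embedded discs $S_1,S_2\subseteq M'$ with $\rho_i\subseteq\partial S_i$. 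To see each $S_i$ is a product disc, note that $\partial S_i$ meets $s\cap M'$ exactly once — at the endpoint of the original $\rho$ it inherits — and meets the new suture $\alpha\times\{\mp1\}$ exactly once, namely where the sub-arc $a\times\{\mp1\}$ of $\partial S_i$ crosses it; so $|\partial S_i\cap s'|=2$, and one checks that at this crossing $\partial S_i$ passes from $S^+(M')$ to $S^-(M')$ because $a$ runs from $\tau_+$ to $\tau_-$.

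It remains to see that $S_1,S_2$ are essential, and this is the step I expect to require the most care. Suppose, say, $S_1$ were inessential in $(M',s')$. By the characterisation of inessential product discs (the remark preceding this lemma), $\partial S_1\cap S^+(M')$ then bounds a disc in $S^+(M')$ cut off together with a single sub-arc of $s'$. Now $S^+(M')$ is obtained from $S^+(M)$ by cutting along $\tau_+$ and capping the two new boundary arcs with the half-discs $T^+\times\{\pm1\}$ of $T$, and $\partial S_1\cap S^+(M')$ is exactly $\rho_1$ together with a short arc in the cap $T^+\times\{-1\}$ running to the new suture. I would re-glue along $\tau_+$ and push the cutting-off disc back into $S^+(M)$; tracking how the cut-and-cap interacts with this disc should show that either $\rho$ is inessential in $S^+(M)$, or $\rho$ can be isotoped in $S^+(M)$ off $\tau_+$ (hence off $\partial T$) — each contradicting a hypothesis of the lemma. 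The same argument applied to $\rho_2$ deals with $S_2$. The subtlety here is that cutting a surface along an arc need not preserve essentiality of arcs in general; the point will be to exploit that $\rho$ meets $\tau_+$ only once and that $\rho_1$ and $\rho_2$ sit in a very restricted way relative to the cap, which is what distinguishes the two alternative conclusions above.
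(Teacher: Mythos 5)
Your proposal is correct and follows essentially the same route as the paper: reduce $S\cap T$ to a single arc crossing from $S^+(M)$ to $S^-(M)$ by innermost-circle and outermost-arc exchanges that keep $\rho\subset\partial S$, note that the decomposition then cuts $S$ into two product discs, and rule out inessentiality of $S_1,S_2$ by transporting the boundary-parallelism back to $\partial M$. The only point you leave as a sketch is the one the paper finishes in a line: if $S_1$ were inessential, the disc in $\partial M'$ between $\rho_1$ and $s'$ meets $\partial M$ in a disc, and that disc is exactly an isotopy of $\rho$ off $\partial T$, contradicting the hypothesis (so only your second alternative actually occurs).
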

\begin{proof}
Notice that we are free to change $S$, provided we always have $\rho\subset\partial S$. 
A simple closed curve in $T\cap S$ that is innermost in $T$ bounds a disc both in $T$ and in $S$. Replacing the subdisc of $S$ with the subdisc of $T$ reduces $|T\cap S|$. Thus we can remove all simple closed curves from $T\cap S$. 
Exactly one arc of $T\cap S$ has an endpoint on $S^+(M)$, at the point where $\partial T$ crosses $\rho$, and the other endpoint of this arc lies on $S^-(M)$. Suppose there is at least one other arc of $T\cap S$. Choose such an arc that is outermost in $T$. This cuts off a subdisc of $T$ disjoint from $S^+(M)$ with interior disjoint from $S$. It also cuts off a subdisc of $S$ that is disjoint from $S^+(M)$. Replacing the subdisc of $S$ with the subdisc of $T$ again reduces $|T\cap S|$.
We may assume, therefore, that $T\cap S$ is a single arc, running from $S^+(M)$ to $S^-(M)$. Since $S\cap S^+(M)=\rho$ has not changed, $S$ is still essential.

Now the disc decomposition along $T$ cuts $S$ into two product discs $S_1, S_2$. Suppose $S_1$ is inessential. Then $\rho_1=\partial S_1\cap S^+(M')$ is inessential in $S^+(M')$. Let $T_1$ be a disc in $\partial M'$ between $\rho_1$ and $s'$. Then $T_1^*=T_1\cap\partial M$ is a disc. This disc defines an isotopy of $\rho\cap T_1^*$ in $\partial M$ that makes $\rho$ disjoint from $\partial T$. This contradicts that no such isotopy exists. Thus $S_1$ is essential. Similarly, so in $S_2$.
\end{proof}

\begin{definition}
For a sutured manifold $(M,s)$ embedded in $\Sphere$, the \textit{complementary sutured manifold} $(M',s')$ is defined by $M'=\Sphere\setminus \Int_{\Sphere}(M)$ and $s'=s$.

By the \textit{complementary sutured manifold to a Seifert surface $R$} we mean the complementary sutured manifold to the product sutured manifold given by a product neighbourhood of $R$.
\end{definition}

\begin{remark}\label{fibredremark}
A link $L$ with a Seifert surface $R$ is fibred with fibre $R$ if and only if the complementary sutured manifold to $R$ is a product sutured manifold.
\end{remark}

\begin{lemma}\label{hopfbandlemma}
Let $D$ be a special alternating diagram, and let $r$ be a white bigon region of $D$. Then $r$ defines a product disc in the complementary sutured manifold to the Seifert surface $R$ given by applying Seifert's algorithm to $D$. The effect of the product disc decomposition is to remove the region $r$, replacing the two crossings of $r$ with a single crossing. Such a change on $D$ has the effect of deplumbing a Hopf band from $R$ (see Figure \ref{productregionspic1}).
\begin{figure}[htbp]
\centering
\input{pictexfiles/productregionspic1}
\caption{\label{productregionspic1}}
\end{figure}
\end{lemma}

\begin{proposition}[\cite{MR2131376} Proposition 1.4]\label{samelinkprop}
Let $R$ be a connected minimal genus Seifert surface for a link $L$. Let $R'$ be the surface obtained from $R$ by a product disc decomposition, and let $L'=\partial R'$. Then the link of the vertex $R$ in $\ms(L)$ is isomorphic to the link of $R'$ in $\ms(L')$. 
Furthermore, the isomorphism is induced by the procedure described in Remark \ref{productdiscremark}.
\end{proposition}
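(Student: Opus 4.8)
The plan is to translate the statement into the language of sutured manifolds and to read the isomorphism off the product disc decomposition. Write $(M,s)$ for the complementary sutured manifold to $R$, let $T$ be the product disc whose decomposition produces $R'$, and let $(M',s')$ be the result, so that $(M',s')$ is the complementary sutured manifold to $R'$ and $M=M'\cup(T\times[-1,1])$ recovers $M$ from $M'$. A few preliminaries: since $R$ is connected and minimal genus it is incompressible and $\partial$--incompressible and $L$ is non-split (a separating sphere could be surgered off $R$, disconnecting it); the product disc decomposition cuts the surface along an arc, so $\chi(R')=\chi(R)+1$ (compare Lemma~\ref{hopfbandlemma}); and $R'$ is a minimal genus Seifert surface for $L'$, since the inverse operation drops Euler characteristic by one, so any Seifert surface for $L'$ of Euler characteristic exceeding $\chi(R')$ would give one for $L$ exceeding $\chi(R)$, contradicting minimality. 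Finally, recall (Proposition~\ref{productregionsprop}, together with a product-region argument to push product regions off $R$) that a vertex of the link of $R$ in $\ms(L)$ is exactly an isotopy class in $M$ of an incompressible, $\partial$--incompressible surface $S$ properly embedded in $M$ with $\partial S=s$ (any such $S$ is automatically disjoint from $R$), and $S_0,\dots,S_k$ span a simplex of that link precisely when they can be isotoped to be pairwise disjoint in $M$; the analogous description holds for $R'$ and $(M',s')$.

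Next I would set up the map on vertices. Given such an $S$, make it almost transverse to $T$; a circle of $S\cap T$ innermost in $T$ bounds a disc in $T\subseteq M$, hence by incompressibility of $S$ a disc in $S$, so surgering eliminates all circles. Since $S$ meets $\partial M$ only in $s$ and $T$ meets $s$ only in the two points $\partial T\cap s$, what remains of $S\cap T$ is a single arc between those points, which I isotope in $T$ onto the core arc $\rho$ of $T$. Remark~\ref{productdiscremark} then gives that $S':=S\setminus(T\times(-1,1))$ is a spanning surface of $(M',s')$, and, being $S$ cut along the properly embedded arc $\rho$, satisfies $\chi(S')=\chi(S)+1=\chi(R)+1=\chi(R')$. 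Since $\chi(R')$ is the maximal Euler characteristic of a Seifert surface for $L'$, $S'$ admits no compression, so it is an incompressible (hence, $L'$ being non-split, also $\partial$--incompressible) minimal genus Seifert surface for $L'$, disjoint from $R'$. Conversely, re-gluing the rectangle $\rho\times[-1,1]$ to any spanning surface of $(M',s')$ along $\rho\times\{\pm1\}\subseteq s'$ produces a spanning surface of $(M,s)$; these two operations are mutually inverse, and re-gluing a fixed piece visibly respects ambient isotopy, so $[S]\mapsto[S']$ is a well-defined bijection on vertices, induced precisely by the procedure of Remark~\ref{productdiscremark}.

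Then I would check that this bijection is simplicial; since $\ms$ is flag for every link, it suffices to treat edges. Given disjoint spanning surfaces $S_1,S_2$ of $(M,s)$, clean up $(S_1\cup S_2)\cap T$ as before so that it is two arcs, one on each $S_i$, meeting only at $\partial T\cap s$ and otherwise disjoint (as $S_1\cap S_2\subseteq s$); choosing a single product neighbourhood $T\times[-1,1]$ and pushing the two surfaces, without creating intersections, into the form $S_i\cap(T\times[-1,1])=\rho_i\times[-1,1]$, the cut surfaces $S_i\setminus(T\times(-1,1))$ are disjoint spanning surfaces of $(M',s')$. Conversely, re-gluing disjoint collared copies of $\rho\times[-1,1]$ keeps disjoint spanning surfaces of $(M',s')$ disjoint. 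Hence edges go to edges in both directions, so the vertex bijection extends to the desired isomorphism between the link of $R$ in $\ms(L)$ and the link of $R'$ in $\ms(L')$.

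I expect the main obstacle to be the interaction of the cut-and-paste with disjointness: simultaneously normalising two disjoint spanning surfaces so that each meets a common product neighbourhood of $T$ in a single rectangle without producing new intersections, and the careful check that re-gluing the rectangle is genuinely inverse to cutting along $T$ (which is exactly what makes well-definedness on isotopy classes automatic). The remaining ingredients are routine innermost-disc surgery and Euler-characteristic bookkeeping once the sutured-manifold dictionary is in place.
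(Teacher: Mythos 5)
You should know at the outset that the paper contains no proof of Proposition \ref{samelinkprop}: it is imported verbatim from \cite{MR2131376}, so there is no internal argument to compare yours against. Your outline follows the natural sutured-manifold route (identify the link of $R$ with spanning surfaces of the complementary sutured manifold $(M,s)$, and let the cut-and-paste of Remark \ref{productdiscremark} carry one side to the other), which is surely the spirit of the cited proof, but its two load-bearing steps are asserted rather than proved. First, the dictionary itself: the claim that vertices of the link of $R$ in $\ms(L)$ ``are exactly'' isotopy classes in $M$ of incompressible, $\partial$-incompressible spanning surfaces of $(M,s)$ is wrong as stated --- push-ins of $R^+$ and of $R^-$ are such surfaces, in general they give two distinct isotopy classes in $M$, and both correspond to the vertex $R$, which the link excludes; moreover incompressible plus $\partial$-incompressible does not imply minimal genus, so the class of surfaces is also too large. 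More seriously, the direction you actually need --- that two minimal genus Seifert surfaces disjoint from $R$ which are ambient isotopic in the exterior of $L$ are already isotopic inside $M$ --- is the real content, and ``a product-region argument to push product regions off $R$'' is not a proof: Proposition \ref{productregionsprop} hands you a product region between \emph{subsurfaces} of $S_1$ and $S_2$ in the exterior, and when $R$ lies inside it you must show this forces $R$ to be isotopic to $S_i$ (using incompressibility, connectedness and $\chi(R)=\chi(S_i)$) to get a contradiction. The same omission recurs in the well-definedness of your cut map: you verify only that cutting followed by regluing is the identity; if two normalized surfaces are isotopic in $M$ by an isotopy that sweeps across $T$, nothing you say shows the cut surfaces are isotopic in $M'$, so injectivity of regluing (equivalently, well-definedness of cutting on classes) is open. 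Without these points you do not yet have a bijection on vertices.

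Two further steps need repair or completion. Your Euler-characteristic argument that $R'$ (and each cut surface) has minimal genus for $L'$ --- ``any better Seifert surface for $L'$ plus the band gives a better one for $L$'' --- fails as written, because the band's interior need not be disjoint from an arbitrary Seifert surface $F'$ for $L'$ and cannot in general be isotoped off $F'$ rel its attaching arcs, so $F'\cup(\text{band})$ need not be embedded; the standard fix is Gabai's theorem that product disc decompositions preserve and reflect tautness of the complementary sutured manifold, which yields minimal genus and incompressibility of $R'$ and of the cut surfaces in one stroke. Finally, for edges you must normalise two \emph{disjoint} spanning surfaces with respect to $T$ simultaneously without creating intersections: the innermost-disc surgeries you perform on $S_1$ use discs in $T$ that may meet $S_2$, so the surgeries have to be organised on innermost circles and outermost arcs of $(S_1\cup S_2)\cap T$ jointly. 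You flag this as the main obstacle but leave it undone; as it stands the proposal is a correct map of the standard argument rather than a proof.
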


\begin{definition}
Given a special alternating link diagram $D$, let $G(D)$ be the planar graph with a vertex in each white region of $D$ and an edge through each crossing.
\end{definition}

\begin{theorem}[see \cite{MR2001624} Theorem 3(5) and \cite{2011arXiv1101.1412B} Theorem 5.1]\label{fibredgraphthm}
Let $D$ be a special alternating diagram of a prime link $L$. Then $L$ is fibred if and only if the graph $G(D)$ can be reduced to a single vertex by a sequence of the following moves.
\begin{itemize}
\item Delete a loop (that is, an edge with both endpoints at the same vertex).
\item Contract an edge, one endpoint of which is at a vertex with valence 2.
\end{itemize}
Informally, $G(D)$ satisfies this condition exactly when it is a `tree of loops'.
\end{theorem}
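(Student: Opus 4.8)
The plan is to obtain the theorem by an induction that peels white bigon regions off $D$ one at a time, using the sutured‑manifold apparatus already in place, with a residual combinatorial case. Since $D$ is alternating, the surface $R$ produced by Seifert's algorithm has minimal genus (\cite{MR860665}), so $L$ is fibred if and only if it is fibred with fibre $R$; by Remark~\ref{fibredremark} this holds if and only if the complementary sutured manifold $(M,s)$ of $R$ is a product sutured manifold. First I would reduce to the case that $D$ is reduced: a nugatory crossing of a special alternating diagram corresponds to a loop of $G(D)$ (the region doubled at such a crossing must be white, because $\mathcal{G}$ has no loops), and removing a nugatory crossing leaves $L$ fixed, leaves $R$ fixed up to isotopy, and deletes the loop; conversely, when $D$ is reduced and prime $G(D)$ has no loops. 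So assume $D$ is reduced. Then the only reduction move ever applicable to $G(D)$ itself is the contraction of an edge at a valence‑$2$ vertex, and a valence‑$2$ vertex of $G(D)$ is precisely a white bigon region of $D$.

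Now induct on the number of crossings, the base of the "bigon" case being as follows. If $D$ contains a white bigon region $r$, then by Lemma~\ref{hopfbandlemma} $r$ gives a product disc in $(M,s)$, and decomposing along it produces the complementary sutured manifold of the surface $R'$ of the special alternating diagram $D'$ obtained by collapsing $r$; here $R'$ is again of minimal genus for $L'=\partial R'$, so by Remark~\ref{productremark} $L$ is fibred if and only if $L'$ is. On the graph side, $G(D')$ is obtained from $G(D)$ by contracting one of the two edges at $r$ and, if a loop results, deleting it; conversely, since $r$ has valence $2$ it persists under every reduction move until one of its edges is contracted, so $G(D)$ can be reduced to a point if and only if $G(D')$ can. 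The inductive hypothesis applied to $D'$ then settles this case in both directions. (In the Murasugi‑sum picture this is the classical fact that plumbing or deplumbing a Hopf band, the fibre surface of the Hopf link, neither creates nor destroys fibredness.)

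The genuine base case — and the main obstacle — is a reduced, prime, special alternating diagram $D$ with at least one crossing and no white bigon region. Then $G(D)$ has edges but neither loops nor valence‑$2$ vertices, and, being the planar dual of the loopless graph $\mathcal{G}$, it has no bridges either; hence every vertex has valence $\geq 3$, there are at least two vertices, $b_1(G(D))\geq 2$, and $G(D)$ admits no reduction move at all, so in particular it cannot be reduced to a point. Correspondingly I must prove that $L$ is not fibred, i.e.\ that $(M,s)$ is not a product. I would do this by extracting from $G(D)$ a $2$--connected subgraph of minimum valence $\geq 3$ — which necessarily contains a $\theta$--subgraph — and translating it into an essential surface in $(M,s)$: a $\theta$--subgraph (two white regions joined by three chains of crossings) should produce an incompressible, non‑boundary‑parallel annulus in $(M,s)$, or a product disc whose decomposition fails to reduce the sutured manifold, so that $(M,s)$ cannot be exhausted by product disc decompositions and is not a product; equivalently, one reads the Seifert form of $R$ off $G(D)$ and checks that the extra independent cycles push the leading coefficient of the Alexander polynomial away from $\pm 1$. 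This translation from combinatorics to sutured‑manifold topology is the substantive content of \cite{MR2001624} and \cite{2011arXiv1101.1412B}; the remainder of the argument is the bookkeeping above, together with the purely graph‑theoretic observation that "every block of $G(D)$ reduces to a loop" is precisely the "tree of loops" condition and is equivalent to "$G(D)$ reduces to a point".
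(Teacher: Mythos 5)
This theorem is not proved in the paper at all: it is quoted from \cite{MR2001624} and \cite{2011arXiv1101.1412B}, so there is no in-paper argument to compare against. Judged on its own terms, your reduction scheme is sound as far as it goes: collapsing a white bigon is a Hopf-band deplumbing (Lemma \ref{hopfbandlemma}), which by Remarks \ref{fibredremark} and \ref{productremark} preserves fibredness, and on the graph side it is exactly the contraction of an edge at a valence-$2$ vertex; the nugatory-crossing/loop dictionary and the observation that a reduced prime bigon-free diagram has $G(D)$ of minimum valence at least $3$ (indeed at least $4$, since valences are even) are also fine, modulo the asserted but unproved confluence of the reduction system.

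The genuine gap is the base case that carries all the content: you must show that a reduced, prime, special alternating diagram with at least one crossing and no white bigon is never fibred, and you do not prove it — you defer it to the very references the theorem is cited from, offering two sketches. The first sketch does not work as stated: exhibiting an incompressible, non-boundary-parallel annulus in the complementary sutured manifold is not an obstruction to being a product, since a product sutured manifold $S\times\intvl$ contains vertical annuli over every essential curve in $S$; you would need a genuinely product-detecting invariant (e.g.\ Gabai's criterion, sutured Floer homology, or an essential product disc/annulus argument showing some decomposition terminates in a non-product piece). The second sketch — computing the leading coefficient of the Alexander polynomial from $G(D)$ and showing it is not $\pm1$ when $b_1$ of some block exceeds $1$ — is the route actually taken in the literature, but it requires the explicit determinantal formula for the Alexander polynomial of a special alternating link in terms of the checkerboard graph and a positivity/counting argument to rule out cancellation; none of this is supplied. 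As it stands the proposal reduces the theorem to itself.
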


\begin{lemma}
Every vertex of $G(D)$ has even valency.
If $D$ is a reduced diagram of a prime link then $G(D)$ has no cut vertices or loops.
\end{lemma}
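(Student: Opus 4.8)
Here is the plan. The three assertions are essentially independent, so I would prove them in turn, using the special alternating structure for the parity statement and classical recognition results (nugatory crossings, Menasco) for the other two.

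\textbf{Even valency.} I may assume $D$ has at least one crossing, the remaining case being trivial. Fix a white region $w$. Since $D$ is connected, $w$ is a disc whose boundary circle meets $D$ in a cyclic sequence of crossings $c_1,\dots,c_k$ (these are distinct, as a repeated crossing would be nugatory) separated by edges $e_1,\dots,e_k$ of $D$, with $e_i$ running from $c_i$ to $c_{i+1}$ (indices mod $k$); thus $k$ is the valency of $w$ in $G(D)$. The plan is to orient $\partial w$ and set $a_i\in\{0,1\}$ according to whether the link orientation of $e_i$ agrees with this direction. The key local fact is that at any crossing of a \emph{special} alternating diagram the two edge-segments bounding a given white quadrant either both point towards the crossing or both point away from it: the black quadrants are exactly the two through which the Seifert band runs, so the white quadrants are the ``caps'' of the Seifert smoothing, and at a cap both bordering arcs lie on the same (incoming, resp.\ outgoing) side of the smoothing. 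Applying this at $c_i$, where $\partial w$ arrives along $e_{i-1}$ and departs along $e_i$, forces $a_{i-1}\ne a_i$ in both cases, so $(a_i)$ is a cyclically alternating $0/1$ sequence of length $k$ and $k$ is even. (Only ``special alternating'' is used here.)

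\textbf{No loops.} A loop of $G(D)$ at a white region $w$ would be a crossing $c$ at which $w$ occupies two quadrants, necessarily opposite. Joining these two quadrants by an arc inside $w$ and pushing it across $c$ yields a simple closed curve meeting $D$ only at $c$, i.e.\ $c$ is nugatory, contradicting that $D$ is reduced. (Here only reducedness is needed.)

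\textbf{No cut vertices.} Suppose $w$ is a cut vertex of the connected plane graph $G(D)$. I would use the standard fact that then the edges of $G(D)$ at $w$ break into at least two consecutive cyclic blocks, and the face of $G(D)$ occupying a gap between two consecutive blocks has a boundary walk passing through $w$ at least twice. Since the faces of $G(D)$ are exactly the black regions of $D$, this gives a black region $B$ meeting $w$ along two distinct edges $e_1,e_2$ of $D$, chosen so that a nonempty part of $G(D)-w$ lies on each side of $\{e_1,e_2\}$ around $\partial w$. Forming the simple closed curve $\rho$ out of an arc in $w$ from a point of $e_1$ to a point of $e_2$ together with an arc in $B$ back again, one gets a curve disjoint from the crossings of $D$, meeting the edges of $D$ transversely in exactly two points, and with at least one crossing on each side (each region of a reduced diagram with at least two crossings carries at least two crossings on its boundary, so each of the two separated parts of $G(D)-w$ contains a crossing). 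By \cite{MR721450} this displays $L$ as a connected sum, contradicting primeness.

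The main obstacle is this last step: one must make watertight the dictionary between a cut vertex of $G(D)$ and a ``visible'' connected-sum sphere of $D$, in particular choosing $e_1,e_2$ so that $\rho$ provably has crossings on both sides and disposing of the degenerate configurations (for instance when a separated component of $G(D)-w$ is a single white bigon, cf.\ Lemma \ref{hopfbandlemma}); the parity argument and the no-loops argument are, by contrast, essentially immediate once the local picture of a special crossing is set up.
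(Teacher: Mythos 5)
The paper states this lemma without proof, so there is nothing to compare against line by line; judged on its own, your argument is correct and complete in outline. The parity argument is the standard one for special diagrams (white quadrants are the ``in--in'' and ``out--out'' corners of the Seifert smoothing, so the link orientation alternates along the boundary of a white region), and the loop and cut-vertex arguments correctly reduce to nugatory crossings and to Menasco's theorem respectively. Two small remarks. First, the evenness statement is asserted for any special alternating diagram, with no reducedness hypothesis, so your parenthetical ``the $c_i$ are distinct, as a repeated crossing would be nugatory'' invokes something not assumed — but it is also unnecessary: run the same alternation argument along the boundary \emph{walk} of $w$ (corners counted with multiplicity equal to the valency) and the conclusion is unchanged. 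Second, the step you flag as the main obstacle does go through cleanly: at a cut vertex $w$ the $G(D)$-edges fall into contiguous blocks according to the component of $G(D)-w$ they enter; the face of $G(D)$ in a gap between two consecutive blocks must meet $w$ in a second corner (else its boundary walk would give a path in $G(D)-w$ joining two different components), and since each face of $G(D)$ contains exactly one black region, the two corners give the same black region $B$ meeting $w$ along two arcs $e_1,e_2$ of $D$. The curve $\rho$ through $w$ and $B$ then has crossings on both sides simply because each of the two cyclic blocks of $G(D)$-edges at $w$ is nonempty and each such edge passes through a crossing lying on the corresponding side (your appeal to ``every region has at least two crossings on its boundary'' is not needed); the no-loops statement, together with reducedness applied to $B$, ensures $\overline{w}$ and $\overline{B}$ are embedded discs so that $\rho$ is an embedded curve meeting the edges of $D$ exactly twice, and (as in your reducedness discussion) both factor diagrams are reduced alternating with at least one crossing, hence nontrivial, contradicting primeness.
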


\subsection{Adjacent surfaces are distinct}

\begin{definition}
Given a special alternating diagram $D$, let $R$ be the Seifert surface given by applying Seifert's algorithm. Recall that $R$ is oriented. Divide the black regions of $D$ into \textit{$a$--regions} and \textit{$b$--regions} according to whether the normal to $R$ in a given region points into $\bbb{a}$ or into $\bbb{b}$.
\end{definition}

\begin{proposition}\label{disjointnessprop1}
Let $D$ be a reduced, special alternating diagram for a prime link $L$. Let $v,v'\in\V(\kc{D})$ such that $\dist_{\kc{D}}(v,v')=1$ and $v$ is given by $D$. Then $R=\mathcal{A}(v)$ is given by applying Seifert's algorithm to $D$. Let $R'=\mathcal{A}(v')$. Then $\dist_{\ms(L)}(R,R')=1$.
\end{proposition}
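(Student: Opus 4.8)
The inequality $\dist_{\ms(L)}(R,R')\le 1$ is immediate from Proposition \ref{edgestoedgesprop}, so the content of the statement is that $R$ and $R'$ are \emph{not} ambient isotopic. As in the proof of Proposition \ref{edgestoedgesprop}, the edge of $\kc{D}$ from $v$ to $v'$ is recorded by a set $\Lambda$ of regions of $\theta(D)$, with complementary set $\lambda$: adding the regions of $\Lambda$ to $v$ in a suitable order gives $v'$, while adding all regions returns $v$, so, as $v\ne v'$, both $\Lambda$ and $\lambda$ are non-empty. Consequently $R'=\mathcal{A}(v')$ is the surface in admissible special form at $D$ determined by a non-empty coherent collection of flype circles $\phi_1,\dots,\phi_k$ (with $k\ge 1$), whose common positive side is exactly $\Lambda$; each $\phi_i$ is an edge-pair in a $\theta$--graph of $\graph{F}{D}$, so in particular $\theta(D)$ is non-empty.

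Suppose, for contradiction, that $R$ is ambient isotopic to $R'$. Here $R'$ lies in special form disjoint from $R$, and both are minimal genus Seifert surfaces, hence incompressible and $\partial$--incompressible in the Haken manifold $M=\Sphere\setminus\nhd(L)$, which has incompressible boundary. By Proposition \ref{productregionsprop}, $R$ and $R'$ bound a product region $N$ in $M$: up to homeomorphism $N=T\times\intvl$ with $T\cong R$, $T\times\{0\}=R\cap N$, $T\times\{1\}=R'\cap N$, and $\partial N\setminus(T\times\partial\intvl)\subseteq\partial M$. Over the regions in $\Lambda$ the $P$--arcs of $R$ and of $R'$ coincide, so there $N$ is the trivial collar of $R$; the non-trivial part $N_{\lambda}$ of $N$ lies over the regions in $\lambda$, where the two sets of $P$--arcs differ, and amounts to the part of $M$ cut along $R$ that lies over $\lambda$, which must therefore be a product sutured manifold. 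Intersecting with $\bbb{a}$ and $\bbb{b}$, the pieces of $N_{\lambda}$ match the discs of $R\cap\bbb{a}$ (respectively $R\cap\bbb{b}$) lying over $\lambda$ with those of $R'\cap\bbb{a}$ (respectively $R'\cap\bbb{b}$) by products $D^2\times\intvl$, and tracing these matchings across the $P$--arcs and the arcs of $L$ records how the regions and crossings of $D$ lying in $\lambda$ are organised.

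Now fix a $\theta$--graph $e^{\theta}$ of $\graph{F}{D}$ containing one of the $\phi_i$, running between black regions $u$ and $w$, with $\lambda$ on one side of $\phi_i$. The product structure of $N_{\lambda}$ supplies, along $e^{\theta}$, a product disc for $M$ cut along $R$: a disc $E$ meeting $R$ and $R'$ in single arcs and otherwise disjoint from $R\cup R'$, with $\partial E$ running once along $e^{\theta}$. Transporting $E$ through the cyclic sequence of regions between consecutive edges of $e^{\theta}$ --- in the same way that a Hopf band is recognised from a white bigon in Lemma \ref{hopfbandlemma} --- yields an arc $\rho'$ lying in the white regions along $e^{\theta}$, meeting $\graph{F}{D}$ only at $u$ and $w$, such that $\sphere\setminus(\graph{F}{D}\cup\rho')$ has no bigon region. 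But then $\graph{F}{D}$ could have been enlarged by the edge $\rho'$, contradicting the maximality in the construction of $\graph{F}{D}$. Hence no such product region exists, $R$ is not ambient isotopic to $R'$, and therefore $\dist_{\ms(L)}(R,R')=1$.

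The delicate step is the third paragraph: extracting from the product structure of $N_{\lambda}$ a genuine enlarging chord $\rho'$ (equivalently, a sequence of loop-deletions and valence-$2$ contractions collapsing the portion of $G(D)$ carried by $e^{\theta}$, which Theorem \ref{fibredgraphthm} shows a $\theta$--graph forbids). If this turns out to be awkward to carry out directly, an alternative is to mimic the proof of Proposition \ref{admissibleprop1} and induct on the number of crossings of $D$: choose a cuttable white region (Lemma \ref{cutlemma}) and a crossing $c$ of it that is crossed by $P$--arcs of \emph{both} $R$ and $R'$, cut $D$ and both surfaces along $c$ as in Figure \ref{regionpic1}, apply the inductive hypothesis to the resulting prime, special, alternating diagram, and transfer the conclusion back using Proposition \ref{samelinkprop}, dealing with non-reduced cuts exactly as in Proposition \ref{admissibleprop1}.
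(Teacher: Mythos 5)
Your opening moves match the paper: the inequality $\dist_{\ms(L)}(R,R')\le 1$ comes from Proposition \ref{edgestoedgesprop}, and the real content is ruling out $R\cong R'$, which via Proposition \ref{productregionsprop} reduces to showing that neither complementary piece of $\Sphere\setminus\nhd(L)$ cut along $R\cup R'$ is a product sutured manifold; your observation that the two surfaces are parallel over one side of the flype circles and that only the pieces over the other side matter is also the paper's. But the heart of the argument is missing. Your third paragraph asserts that a product structure on the remaining piece can be ``transported'' along a $\theta$--graph to produce a chord $\rho'$ that could be added to $\graph{F}{D}$ without creating a bigon, contradicting maximality of $\graph{F}{D}$. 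No argument is given that such a $\rho'$ exists, nor that it is not parallel to an edge already present in $\graph{F}{D}$ --- and parallelism is exactly the danger, since the flype arc of the very flype circle you are working with is an edge of $\graph{F}{D}$ running through the same white regions. Indeed, maximality of $\graph{F}{D}$ is not the right obstruction: the paper's proof first reduces (by isotoping the product structure across explicit product discs at the flype circles, Figure \ref{disjointnesspic3}) to the statement that an auxiliary diagram $D_{\Lambda}$ is fibred with fibre its Seifert-algorithm surface, then applies Theorem \ref{fibredgraphthm}. The resulting case analysis needs two different inputs: when $G(D_{\Lambda})\not\cong\crcle$, primality and reducedness of $D$ (no cut vertices or loops in $G(D)$); when $G(D_{\Lambda})\cong\crcle$, the \emph{minimality} of the chosen set of flype circles, which lets one merge two circles as in Figure \ref{kdmslpic2} and reach a contradiction. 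You never fix a minimal set of flype circles, so even granting your transport construction, the second case is not excluded by anything you have assumed --- and it cannot be excluded by maximality of $\graph{F}{D}$ alone. You yourself flag this step as ``delicate''; it is where the proof actually lives, and it is absent.

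The fallback induction is also not a proof as sketched. Cutting $D$ along a crossing changes $\theta(D)$ and hence $\kc{D}$, and you would need to show that the edge from $v$ to $v'$ descends to an edge of the cut diagram's complex, that both surfaces can be cut simultaneously (a flype crossing of $R'$ carries no $P$--arc of $R'$, so the crossing cannot be chosen freely), and that Proposition \ref{samelinkprop} --- which concerns a product disc decomposition of a single surface and the link of that vertex --- transfers the non-isotopy statement in the required way; none of this is set up, and the analogue of the non-reduced case handled in Proposition \ref{admissibleprop1} is not addressed. As it stands the proposal establishes only $\dist_{\ms(L)}(R,R')\le 1$.
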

\begin{proof}
Since $\dist_{\kc{D}}(v,v')=1$, the proof of Proposition \ref{edgestoedgesprop} gives an admissible special form for $R'$ at $D$. This special form has at least one flype circle. Choose a set of flype circles that minimises the number of flype circles (as in the proof of Proposition \ref{kdmslprop2}). Together $R$ and $R'$ cut $\Sphere\setminus\nhd(L)$ into two sutured manifolds. Fix an $a$--region $r$ of $D$. Call the two manifolds $M^+$ and $M^-$, where $M^+$ lies above $r$ and $M^-$ lies below it. We will show that neither of $M^+$ and $M^-$ is a product sutured manifold. By symmetry, it is sufficient to prove this for $M^+$. The result will then follow by Proposition \ref{productregionsprop}.

Consider a flype circle $\phi$. The arcs shown in bold in Figure \ref{disjointnesspic1} 
\begin{figure}[htbp]
\centering
\input{pictexfiles/disjointnesspic1}
\caption{\label{disjointnesspic1}}
\end{figure}
form part of the boundary of a single disc of $R'\cap \bbb{a}$. To see this, note that only two overcrossing arcs of $L$ cross the boundary of $A$. Hence one of $\rho_1$ and $\rho_2$ defines a product disc in $M^+$ contained in $\bbb{a}$. Assume, without loss of generality, this is $\rho_1$. Then the product disc in $M^+$ defined by $\rho_3$ is contained in $\bbb{b}$.

Up to isotopy, $M^+\cap \bbb{a}$ and $M^+\cap \bbb{b}$ are defined by where they meet $\sphere$. 
We now consider where $M^+$ lies relative to $D$ and the special form of $R'$. As $M^+$ always meets the same side of $R$, it lies above all $a$--regions of $D$, and below all $b$--regions. In the white regions, $M^+$ lies on exactly one side of each $P$--arc. Let $\rho$ be a $P$--arc. First suppose $\rho$ is the flype arc of a flype circle $\phi$. If $\phi$ and $\rho$ are as in Figure \ref{disjointnesspic2}, 
\begin{figure}[htbp]
\centering
\input{pictexfiles/disjointnesspic2}
\caption{\label{disjointnesspic2}}
\end{figure}
where the black region $r'$ shown is an $a$--region, then $M^+$ lies on the positive side of $\phi$, as marked $*$.
Instead suppose that $\rho$ lies across a crossing $c$. If $c$ lies on the negative side of the flype circles, $\rho$ lies on the positive side of $c$ and $M^+$ lies between $\rho$ and $c$. Otherwise, $\rho$ lies on the positive side of the flype circles. In this case, $M^+$ does not lie between $c$ and $\rho$. It lies on the far side of $c$ from $\rho$, and on the far side of $\rho$ from $c$.

From this we see that on the negative side of the flype circles $R$ and $R'$ are parallel, with $M^+$ as the product region between them. This product structure extends to the product discs described above where $M^+$ meets each flype circle. Thus we only need to consider the pieces of $M^+$ left by removing these product regions.
Let $\Phi$ be the union of the flype circles. The remaining pieces of $M^+$ correspond to the components of $\sphere\setminus\Phi$ that lie on the positive side of the flype circles. Let $\Lambda$ be one such component, and $M^+_{\Lambda}$ the corresponding piece of $M^+$. Create a new diagram $D_{\Lambda}$ from $D$ by changing it as shown in Figure \ref{disjointnesspic3} 
\begin{figure}[htbp]
\centering
\input{pictexfiles/disjointnesspic3}
\caption{\label{disjointnesspic3}}
\end{figure}
along each flype circle that bounds $\Lambda$. Let $R_{\Lambda}$ be the surface given by applying Seifert's algorithm to $D_{\Lambda}$, and $R'_{\Lambda}$ that given by the $P$--arcs. Since the special form of $R'_{\Lambda}$ has no flype circles, $R_{\Lambda}$ and $R'_{\Lambda}$ are parallel through a product sutured manifold $N$. The complementary sutured manifold to $N$ is isotopic to $M^+_{\Lambda}$. Hence $M^+$ is a product sutured manifold if and only if every diagram $D_{\Lambda'}$ constructed in this way is of a fibred link with fibre given by the surface $R_{\Lambda'}$. Again by symmetry, we need only consider $D_{\Lambda}$.

Consider $G(D)$ and $G(D_{\Lambda})$, and a fixed flype circle $\phi_{\Lambda}$ that bounds $\Lambda$. As can be seen from in Figure \ref{disjointnesspic4}, 
\begin{figure}[htbp]
\centering
\input{pictexfiles/disjointnesspic4}
\caption{\label{disjointnesspic4}}
\end{figure}
$G(D_{\Lambda})$ is obtained from $G(D)$ by collapsing everything on the negative side of the flype circles. There may be other flype circles in $A$ that are collapsed to give $A'$. Assume $D_{\Lambda}$ is fibred. 
Then by Theorem \ref{fibredgraphthm} the edge $e_{\Lambda}$ forms part of a subdivision of a loop.

Suppose that $G(D_{\Lambda})$ is not homeomorphic to $\crcle$. We can then divide up $D_{\Lambda}$ and $G(D_{\Lambda})$ as shown in Figure \ref{disjointnesspic5}, 
\begin{figure}[htbp]
\centering
\input{pictexfiles/disjointnesspic5}
\caption{\label{disjointnesspic5}}
\end{figure}
where at least three edges of $G(D_{\Lambda})$ run from each of $v_1$ and $v_2$ into $A''$. Because of the `tree-like' structure of $G(D_{\Lambda})$, either $v_1$ is a cut vertex or there is a loop attached to $v_1$. The collapsing of $G(D)$ to $G(D_{\Lambda})$ does not create either of these. Hence $v_1$ is also a cut vertex or has a loop attached to it in $G(D)$. This contradicts that $D$ is prime and reduced. 

Therefore, $G(D_{\Lambda})$ is homeomorphic to $\crcle$. Since the flype $\phi_{\Lambda}$ is essential, at least one vertex of $G(D_{\Lambda})$ in $A'$ comes from collapsing a flype circle in $A$. By `uncollapsing' all such vertices in $G(D_{\Lambda})$ we see that $G(D)$ decomposes as shown in Figure \ref{disjointnesspic6}. 
We can then combine two flype circles as in Figure \ref{kdmslpic2}, contradicting our choice of flype circles. Thus $D_{\Lambda}$ is not fibred.
\begin{figure}[htbp]
\centering
\input{pictexfiles/disjointnesspic6}
\caption{\label{disjointnesspic6}}
\end{figure}
\end{proof}

\begin{proposition}\label{disjointnessprop2}
Let $D$ be a reduced, special alternating diagram for a prime link $L$, and let $v_0\in\V(\kc{D})$ be given by $D$.
Let $v_{-1},v_1\in\V(\kc{D})$ such that $\dist_{\kc{D}}(v_{-1},v_0)=\dist_{\kc{D}}(v_0,v_1)=1$ and $\dist_{\kc{D}}(v_{-1},v_1)=2$.  Let $R_i=\mathcal{A}(v_i)$. Then $\dist_{\ms(L)}(R_{-1},R_1)=2$.
\end{proposition}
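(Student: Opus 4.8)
The plan is to show that $R_{-1}$ and $R_1$ cannot be made disjoint; together with $\dist_{\ms(L)}(R_{-1},R_1)\leq 2$ (which follows from $\dist_{\kc{D}}(v_{-1},v_1)=2$ and Proposition~\ref{edgestoedgesprop} applied along a length-2 path, or from the triangle inequality via $R_0$), this gives $\dist_{\ms(L)}(R_{-1},R_1)=2$. So the whole problem reduces to: \emph{$R_{-1}$ and $R_1$ bound no product region}, i.e. have simplified intersection, which by Proposition~\ref{productregionsprop} is equivalent to $\dist_{\ms(L)}(R_{-1},R_1)\geq 2$. The strategy mirrors the proof of Proposition~\ref{disjointnessprop1}: realise both $R_{-1}$ and $R_1$ in admissible special form at $D$, cut $\Sphere\setminus\nhd(L)$ along the pair, and exhibit a non-product piece.

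**Key steps.** First I would set up the two admissible special forms. Since $\dist_{\kc{D}}(v_0,v_1)=1$ and $\dist_{\kc{D}}(v_{-1},v_0)=1$, the proof of Proposition~\ref{edgestoedgesprop} gives a coherent set of flype circles $\Phi_1$ on $D$ describing $R_1$ and a coherent set $\Phi_{-1}$ describing $R_{-1}$; as in Proposition~\ref{disjointnessprop1} choose each set to be minimal. Now $v_{-1}$ is obtained from $v_0$ by adding the regions of some $\Lambda_{-1}$, and $v_1$ by adding the regions of some $\Lambda_1$; the hypothesis $\dist_{\kc{D}}(v_{-1},v_1)=2$ says precisely that $v_1$ is \emph{not} obtainable from $v_{-1}$ by adding a single set of regions, equivalently (by the combinatorics behind Lemma~\ref{flaglemma}) the associated region-sets are not nested: neither $\Lambda_{-1}\subseteq\Lambda_1$ nor $\Lambda_1\subseteq\Lambda_{-1}$, after cancelling the common part. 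The coordinate at some edge $e$ differs between $v_{-1}$ and $v_1$ by $\pm 2$, forcing $e$ to lie on the boundary of both the positive region of $\Phi_1$ and the positive region of $\Phi_{-1}$, on opposite sides. Fix an $a$--region $r$ of $D$ and let $M^+$, $M^-$ be the two sutured manifolds into which $R_{-1}\cup R_1$ cuts $\Sphere\setminus\nhd(L)$, with $M^+$ above $r$. By symmetry it suffices to show $M^+$ is not a product sutured manifold.

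**The main argument.** Here I would run the flype-circle/graph analysis of Proposition~\ref{disjointnessprop1} relative to the pair $(R_{-1},R_1)$ rather than $(R,R')$. On each region of $\sphere$ where the two sets of flype circles agree in orientation, $R_{-1}$ and $R_1$ are parallel and $M^+$ restricts to a product there, with product discs across the flype circles as before. So $M^+$ is a product iff each diagram $D_\Lambda$ obtained by collapsing to a common positive component is fibred with fibre the Seifert surface $R_\Lambda$. Then, exactly as in Proposition~\ref{disjointnessprop1}, I invoke Theorem~\ref{fibredgraphthm}: fibredness would force $G(D_\Lambda)$ to be a tree of loops, hence (using that the relevant edge $e_\Lambda$ coming from a difference of $2$ in coordinates sits inside a subdivided loop) would force $G(D_\Lambda)\cong\crcle$; uncollapsing the flype circles as in Figure~\ref{disjointnesspic6} then exhibits two flype circles that can be combined as in Figure~\ref{kdmslpic2}, contradicting minimality of $\Phi_1$ (or $\Phi_{-1}$); the alternative, a cut vertex or loop surviving in $G(D)$, contradicts that $D$ is prime and reduced. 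Hence $M^+$ is not a product, and by Proposition~\ref{productregionsprop}, $R_{-1}$ and $R_1$ cannot be isotoped to be disjoint, so $\dist_{\ms(L)}(R_{-1},R_1)\geq 2$.

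**Main obstacle.** The delicate point is not the fibredness/graph contradiction — that is essentially a reuse of Proposition~\ref{disjointnessprop1} — but rather \emph{bookkeeping the two flype-circle systems simultaneously and locating the edge $e$ where they disagree}. One must check that the hypothesis $\dist_{\kc{D}}(v_{-1},v_1)=2$ genuinely produces a coordinate differing by $\pm 2$ (not merely two coordinates each differing by $1$ in a compatible way that would still allow a single-region-set passage), which is where the non-nestedness of the region sets $\Lambda_{-1}$, $\Lambda_1$ and the flag-complex combinatorics of Lemma~\ref{flaglemma} must be used carefully; and one must verify that the pieces of $M^+$ obtained after stripping the product regions between $R_{-1}$ and $R_1$ really are governed by diagrams $D_\Lambda$ of the same shape as in the single-flype analysis, so that Theorem~\ref{fibredgraphthm} applies verbatim. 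Once that reduction is in place the rest follows the template of Proposition~\ref{disjointnessprop1} closely.
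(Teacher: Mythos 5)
There is a genuine gap, and it sits at the heart of the proposition. Your argument cuts $\Sphere\setminus\nhd(L)$ along $R_{-1}\cup R_1$ into ``two sutured manifolds $M^+$ and $M^-$'' and then runs the fibredness/graph argument of Proposition \ref{disjointnessprop1}. But that two-piece decomposition only exists when the two surfaces are disjoint, which is exactly the situation of Proposition \ref{disjointnessprop1} (there $R$ and $R'$ were disjoint and the issue was distinguishing them). Here $R_{-1}$ and $R_1$ intersect -- indeed the whole point is to show they cannot be made disjoint -- so before Proposition \ref{productregionsprop} can be applied one must choose an explicit relative position of the pair and then show that \emph{no} component of the complement of $R_{-1}\cup R_1$ is a product region. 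This is where almost all of the work lies in the paper's proof: one has to specify the intersection arcs $\overline{\rho}$, decide at each crossing which $P$--arc lies inside and which outside, construct the sets $a$ and $b$ of intersection endpoints with their involutions (via the diagrams $D_\lambda$, the curves $\psi$ and the red/blue colouring of $\sphere\setminus\Psi$), and then check each type of complementary component separately -- in particular the component meeting $\partial\nhd(L)$ (ruled out by an argument about arcs of $\overline{\rho}$ ending on the link, not by fibredness), the components between $P$--arcs in $\Lambda_{-1}\cup\Lambda_1$ (where the product-disc decomposition and the non-fibredness argument you cite are used), and the components between a parallel red and blue $P$--arc (shown to coincide with components already handled). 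Your proposal replaces all of this with the disjoint-surface template, so the reduction ``$M^+$ is a product iff each $D_\Lambda$ is fibred'' is not available in the form you state it.

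A secondary problem is your combinatorial claim that $\dist_{\kc{D}}(v_{-1},v_1)=2$ forces some coordinate to differ by $\pm 2$. This is false: if a $\theta$--graph has at least four edges and $A$, $B$ are two non-adjacent regions, then $v_{-1}=v_0+A$ and $v_1=v_0+B$ (when defined) are at distance $2$ although every coordinate differs by at most $1$; no subset of regions realises the difference $B-A$, which is why they are not adjacent. The paper uses the distance-$2$ hypothesis quite differently: if some region lay in $\Lambda_{-1}\cap\Lambda_1$ one could add it to $v_0$ and replace $v_0$ by the resulting vertex, so one may assume $\Lambda_{-1}\cap\Lambda_1$ contains no region of $\theta(D)$; this is what guarantees that no edge carries two flype arcs or two flype crossings, which is precisely the property needed to carry out the intersection bookkeeping (the placement of $P$--arcs and the construction of $a$, $b$) described above.
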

\begin{remark}
This proof requires careful consideration of the relative positions of the Seifert surfaces in $\Sphere$ and their interaction with the projection plane $\sphere$. As such, it is more suited to being drawn in a specific case than described in words for a general link. Therefore, a worked example is given in Appendix \ref{firstappendix}.
\end{remark}
\begin{proof}
Let $\nhd(L)$ denote the regular neighbourhood of $L$.
Following the proof of Proposition \ref{edgestoedgesprop}, construct an admissible special form at $D$ for $R_{-1}$ and for $R_1$. These special forms will each have at least one flype circle. 
Position each flype circle with the distance from the positive side of the flype crossing to the negative side of the flype arc as small as possible.
Let $\Phi$ denote the set of flype circles for $R_{-1}$ and $R_1$, where we do not require that each passes through the vertices of $\theta(D)$ and instead make them disjoint.
For $i=\pm 1$, let $\Lambda_i$ be the positive side of the flype circles for $R_i$, and $\lambda_i$ the negative side. Note that each of these is a union of regions of $\theta(D)$.

Suppose that at least one region of $\theta(D)$ lies in $\Lambda_{-1}\cap\Lambda_1$.
As in the proof of Proposition \ref{kdmslprop2}, 
there is at least one region $r$ of $\theta(D)$ in $\Lambda_{-1}\cap\Lambda_1$ such that $r^{\theta}$ is defined at $D$.
Adding $r$ to $v_0$ gives a new vertex $v'_0$ in $\kc{D}$ with $\dist_{\kc{D}}(v_{-1},v'_0)\leq 1$ and $\dist_{\kc{D}}(v'_0,v_1)\leq 1$. Thus $\dist_{\kc{D}}(v_{-1},v'_0)=\dist_{\kc{D}}(v'_0,v_1)= 1$ since $\dist_{\kc{D}}(v_{-1},v_1)=2$.
Hence, without loss of generality, we may assume $\Lambda_{-1}\cap\Lambda_1$ does not contain any region of $\theta(D)$. In particular this means that no two flype arcs lie on any one edge of $\theta(D)$, and neither do two flype crossings.

Let $\lambda$ be a component of $\lambda_{-1}$.
Create a new diagram $D_{\lambda}$ by, for every flype circle $\phi$ of $R_{-1}$ on the boundary of $\lambda$, changing $D$ as shown in Figure \ref{disjointnesspic7}a. All $P$--arcs in $\lambda$ can be copied to $D_{\lambda}$.
\begin{figure}[htbp]
\centering
(a)
\input{pictexfiles/disjointnesspic7a}
(b)
\input{pictexfiles/disjointnesspic7b}
\caption{\label{disjointnesspic7}}
\end{figure}
If $D_{\lambda}$ together with the $P$--arcs is not connected, choose a simple closed curve $\psi$ around each component, separating it from the other components. 
We can choose these curves to be disjoint and to have minimal intersection with the images of the flype circles.
As $D$ is connected, each curve $\psi$ must run through the image of at least one flype circle of $R_{-1}$ as shown in Figure \ref{disjointnesspic7}b. Repeat this process for each component of $\lambda_{-1}$. Let $\Psi$ be the union of the flype circles in $D$ together with those sections of each curve $\psi$ contained in $\lambda_{-1}$.

Colour the $P$--arcs of $R_{-1}$ red, and those of $R_1$ blue. Also colour the components of $\sphere\setminus\Psi$. Colour $\Lambda_{-1}$ red and $\Lambda_1$ blue. Colour a component in $\lambda_{-1}\cap\lambda_1$ blue if it meets $\Lambda_1$ anywhere along its boundary (that is, if it meets a component already coloured blue), and colour it red otherwise.
Thus a point $x$ on $D$ in $\lambda_{-1}\cap\lambda_1$ is in a blue component of $\sphere\setminus\Psi$ if there is a path $\rho$ in $D_{\lambda}$ from $x$ to a flype circle of $R_1$, where $\lambda$ is the component of $\lambda_{-1}$ containing $x$, and is in a red component if no such path exists. 

\smallskip

We wish to apply Proposition \ref{productregionsprop} 
to $R_{-1}$ and $R_1$ to show that they cannot be made disjoint. To do so, we must specify precisely how to position them in $\Sphere\setminus\nhd(L)$. 
We will build up a description of the position of each disc in the constructions of $R_{-1}$ and $R_1$ using the flype circles and the $P$--arcs, working inwards from the edge of the disc. Figure \ref{disjointnesspic11} gives an example of the type of picture we will build up.

First note that $\partial R_{-1}=\partial R_1$.
Let $\overline{\rho}$ denote the closure of $(R_{-1}\cap R_1)\setminus\partial R_1$.
Since $R_{-1},R_1$ are incompressible and $\Sphere\setminus\nhd(L)$ is irreducible, $\overline{\rho}$ can be made to consist only of properly embedded arcs. We will arrange that each such arc is contained in a single disc from the construction of each of $R_{-1},R_1$, with its endpoints on $L$. Let $\partial\overline{\rho}$ denote the (as yet undefined) set of endpoints of these arcs. 

We now arrange the $P$--arcs of each surface. Let $c$ be a crossing in $D$. Then $c$ lies in the interior of $\lambda_{-1}$ or $\lambda_1$, and so has at least one $P$--arc across it on the positive side.

\begin{definition}
Given two special forms, and a crossing $c'$ such that the $P$--arcs around $c'$ are disjoint, call the $P$--arc closest to $c'$ on the positive side \textit{inside} at $c'$, and the other $P$--arc(s) on the positive side of $c'$ \textit{outside}.
\end{definition}

If $c$ only has one $P$--arc across it on the positive side, this $P$--arc must be positioned inside to avoid any intersections of $P$--arcs. 
There are three ways that $c$ could come to have two $P$--arcs across it. One is that $c$ lies in the interior of $\lambda_{-1}\cap\lambda_1$. If the corresponding component of $\sphere\setminus\Psi$ has been coloured blue, put the blue $P$--arc outside, and if it is coloured red then put the red $P$--arc outside.
The second possibility is that one, but not both, of the $P$--arcs across $c$ is a flype arc of a flype circle $\phi$. Without loss of generality, $c$ is in the interior of $\lambda_{-1}$ and $\phi$ is a flype circle of $R_{1}$. Then $c$ lies on the positive side of $\phi$, which contradicts the choice of the position of $\phi$.
Hence this case does not occur.
The third possibility is that the white region of $D$ on the positive side of $c$ is a bigon, and one $P$--arc has been placed on the positive side of $c$, while the other $P$--arc has been placed on the negative side of the other crossing. In this case, put the $P$--arc that is paired with $c$ on the inside.

\smallskip

Consider a flype circle $\phi$ with flype crossing $c$, and the pattern of simple closed curves in $D$ given by $\partial(R_{-1}\cap \bbb{a})$. Let $C_{-1}$ be the curve that runs along the overcrossing arc at $c$. After pushing the endpoints of $P$--arcs that lie on $\phi$
to the positive side of $\phi$, as shown in Figure \ref{disjointnesspic8}, we see that $C_{-1}$ also runs along $\rho_0$. This is because each edge of $\theta(D)$ is only crossed by one overcrossing arc.
\begin{figure}[htbp]
\centering
\input{pictexfiles/disjointnesspic8}
\caption{\label{disjointnesspic8}}
\end{figure}
The same is true of the analogous curve $C_1$ in $\partial(R_{1}\cap \bbb{a})$.

Suppose $\phi$ is a flype circle of $R_{-1}$, so that the positive side of $\phi$ is coloured red. Then the two points where $C_{-1}$ crosses $\phi$ lie on the boundary of the same component of $\sphere\setminus\Psi$ in $\lambda_{-1}$. Let $A$ be the set of all such points where this component of $\sphere\setminus\Psi$ is coloured blue, and define an involution $\widehat{\cdot}\colon A\to A$ such that if $a_1\in A$ then $\widehat{a_1}$ is the other point of $A$ on the same flype circle as $a_1$.
Note that if points $a_1,a_2\in A$ lie on the same simple closed curve $C_{-1}$ of $\partial(R_{-1}\cap \bbb{a})$ then the pairs $a_1,\widehat{a_1}$ and $a_2,\widehat{a_2}$ do not interleave on $C_{-1}$.

We would like it to be the case that no two points of $A$ lie on the same overcrossing arc of $D$. However, this is not in general true (see, for example, Figure \ref{disjointnesspic9}). 
\begin{figure}[htbp]
\centering
\input{pictexfiles/disjointnesspic9}
\caption{\label{disjointnesspic9}}
\end{figure}
We therefore create a subset $a$ of $A$ with a new involution where this does not occur, as follows. Given points $a_1$ and $a_2$ of $A$ that lie on a single overcrossing arc of $D$, remove $a_1$ and $a_2$ and change the involution to pair $\widehat{a_1}$ with $\widehat{a_2}$. 
Repeat this as many times as possible.
Note that it is possible, as in Figure \ref{disjointnesspic9}, to reach a pair where $\widehat{a_1}=a_2$. In such a case the two points are simply deleted from $A$.
Since $A$ is finite, this process will terminate. Notice that the final result will not depend on the order in which pairs of points are chosen. Let $a$ be the resulting set of points, and $\overline{\cdot}\colon a\to a$ the resulting involution. 
The following three properties of $A$ are also true in $a$. 
If $a_1\in a$ then $a_1$ and $\overline{a_1}$ lie on the same curve $C_{-1}$ of $\partial(R_{-1}\cap \bbb{a})$ and on the same curve $C_1$ of $\partial(R_{1}\cap \bbb{a})$. Additionally, $a_1$ and $\overline{a_1}$ lie on the boundary of the same component of $\sphere\setminus\Psi$ in $\lambda_{-1}\cap\lambda_1$. Furthermore, if $a_2\in a$ then the pairs $a_1,\overline{a_1}$ and $a_2,\overline{a_2}$ do not interleave on $C_{-1}$ or on $C_1$.

The same process in $\bbb{b}$ gives another set $b$ with involution $\overline{\cdot}\colon b \to b$. We will end up with $\partial\overline{\rho}=a\cup b$.

Armed with these, we next connect the ends of the $P$--arcs to give the position of the neighbourhood of the boundary of every disc in the construction of $R_{-1}$ and $R_1$. For each disc, this neighbourhood is an annulus, one boundary component of which we have already positioned (as $\partial R_{-1}$ and $\partial R_1$ are the same fixed curve on $\partial\!\nhd(L)$). We describe the relative positions of the annuli by drawing on $D$ the other boundary curve of each annulus. We will describe this process in $\bbb{a}$. That in $\bbb{b}$ is analogous. 

Consider two crossings $c$ and $c'$ of $D$ that are adjacent in $D$. Suppose the same colour $P$--arc lies inside at each, as shown in Figure \ref{disjointnesspic10}a. We can then connect the $P$--arcs along the overcrossing arc at $c'$ without them crossing (see Figure \ref{disjointnesspic10}a).
\begin{figure}[htb]
\centering
(a)
\input{pictexfiles/disjointnesspic10a}
(b)
\input{pictexfiles/disjointnesspic10b}
\caption{\label{disjointnesspic10}}
\end{figure}
Suppose instead that the $P$--arcs inside at $c$ and $c'$ are not the same colour. Then one of the crossings lies in $\Lambda_{-1}$ whereas the other is either in $\Lambda_1$ or in a blue region of $\lambda_{-1}\cup\lambda_1$. Hence exactly one point $a_1$ of $a$ lies between $c$ and $c'$. This time we connect the $P$--arcs so that the curves created cross once at $a_1$, as shown in Figure \ref{disjointnesspic10}b.
In this way we can connect all the $P$--arcs to form two sets of simple closed curves, as required. Figure \ref{disjointnesspic11} is an example of the result of this process.

\begin{figure}[htb]
\centering
\input{pictexfiles/disjointnesspic11}
\caption{\label{disjointnesspic11}}
\end{figure}

Finally we position the interior of each disc of $R_{-1}$ and $R_1$. We may view those of $R_{-1}$ as fixed, and vary those of $R_1$. In $\bbb{a}$ we wish to arrange the discs such that an arc of $\overline{\rho}$ connects points $a_1,a_2\in a$ if and only if $\overline{a_1}=a_2$.
Recall that $a_1$ and $\overline{a_1}$ lie on the same curves of $\partial(R_{-1}\cap\bbb{a})$ and $\partial(R_{-1}\cap\bbb{a})$ for any $a_1\in a$, and for $a_3\in a$ the pairs $a_1,\overline{a_1}$ and $a_3,\overline{a_3}$ do not interleave along these curves.
The same holds in $\bbb{b}$. Thus there is no obstruction to our choice of $\overline{\rho}$. This completes the construction of $R_{-1}\cup R_1$.

\bigskip

It now remains to check that $R_{-1}$ and $R_1$ have simplified intersection. Then, since $R_{-1}$ and $R_1$ are not disjoint, Proposition \ref{productregionsprop} shows that $\dist_{\ms(L)}(R_{-1},R_1)>1$. Let $M$ be the complement in $\Sphere\setminus\nhd(L)$ of a regular neighbourhood of $R_{-1}\cup R_1$.  

Suppose a component $M'$ of $M$ does not meet $\sphere$. Then $\partial M'$ is composed of sections of the discs in the construction of $R_{-1}$ and $R_1$, with at least one from each surface. Consider a disc $S$ of $R_1$ that meets $\partial M'$. Then $\partial(S\cap\partial M')$ is a collection of simple closed curves in $R_{-1}\cap R_1$.  Without loss of generality, assume $M'\subset \bbb{a}$. Then $\partial(S\cap\partial M')$ is made up of overcrossing arcs of $L$ and arcs in $\overline{\rho}$. As $L$ is not the unknot, any simple closed curve of $\partial(S\cap\partial M')$ includes both types of arc. This is not possible, since no overcrossing arc includes more than one point of $\partial\overline{\rho}$.
Hence every component of $M$ meets $\sphere$. This means that, in checking whether any component is a product region, it is enough to consider those that meet each region of $D$.

\smallskip

Consider a point of $\partial\overline{\rho}$ on $\partial\!\nhd(L)$. Near this point, $M$ has three components, only one of which, $M_L$, meets $\partial\!\nhd(L)$. On $\partial M_L$ we see the pattern shown in Figure \ref{disjointnesspic12}. 
\begin{figure}[htbp]
\centering
\input{pictexfiles/disjointnesspic12}
\caption{\label{disjointnesspic12}}
\end{figure}
Suppose $M_L$ is a product region between $R_{-1}$ and $R_1$. 
Then a closed regular neighbourhood of $\overline{\rho}\cup\partial\!\nhd(L)$ in $\partial M_L$ 
is of the form $\partial S_L\times\intvl$ for some surface $S_L$, and in particular is a collection of annuli. Since any component of $\partial\!\nhd(L)\cap\partial M_L$ is an annulus, and each endpoint of an arc of $\overline{\rho}$ lies on $\partial\!\nhd(L)$, this cannot be the case. Hence $M_L$ is not a product region.

Let $r$ be a black region of $D$, and let $M_r$ be the component of $M$ that meets $r$. Then $M_r$ meets the Seifert surface $R_0$. Since $R_0$ is disjoint from $R_{-1}\cup R_1$ and connected, $R_0$ is entirely contained in $M_r$. The manifold $M_L$ also meets $R_0$. This shows that $M_r=M_L$, so $M_r$ is not a product region. The same is true if $r$ is instead a section of a white region of $D$ that meets $L$. We are therefore left to consider those sections of white regions of $D$ that are entirely bounded by $P$--arcs. These components of $M$ form sutured manifolds, where the sutures are $R_{-1}\cap R_1$.

\smallskip

Let $r$ be a section of a white region of $D$ that lies between $P$--arcs in $\Lambda_{-1}\cup\Lambda_1$, and let $M_r$ be the component of $M$ that meets it. Let $\Lambda$ be the component of $\sphere\setminus\Psi$ containing $r$. Inside $\Lambda$, the discs of $R_{-1}$ and $R_1$ are parallel to $R_0$, and $M_r$ is isotopic to the complement of $R_0$ there. As in the proof of Proposition \ref{disjointnessprop1}, we aim to decompose $M_r$ along product discs, and to edit the diagram $D$ to find a special alternating link diagram $D'$ such that one piece of $M_r$ is the complement of the surface given by applying Seifert's algorithm to $D'$. Again we will see that the link with diagram $D'$ is not fibred, and hence that $M_r$ is not a product sutured manifold. 

Let $c$ be the flype crossing of a flype circle $\phi$ in the boundary of $\Lambda$. 
Let $C_{-1}$ be the curve of $\partial(R_{-1}\cap \bbb{a})$ that crosses $c$ and $C_1$ the corresponding curve of $\partial(R_{1}\cap \bbb{a})$. Then $C_{-1}$ and $C_1$ also cross the flype arc of $\phi$ together. Let $S_i$ be the disc of $R_i\cap \bbb{a}$ with boundary $C_i$ for $i=\pm 1$. Choose a point $x$ on $C_{-1}\cap C_1$ slightly inside $\Lambda$ from $c$, and a similar point $x'$ near the flype arc of $\phi$. 

Suppose that there exists a point $a_1$ of $a$ on $C_{-1}$ or on $C_1$ such that $a_1,\overline{a_1}$ interleave with $x,x'$. 
Then there is a point $a_2\in A$ such that $a_2,\widehat{a_2}$ interleave with $x,x'$. This contradicts that all the flype circles in $\Phi$ are disjoint. Thus no such point exists.

This means there is a product disc in $M_r$ between $S_{-1}$ and $S_1$ meeting $R_{-1}\cap R_1$ at $x$ and $x'$. Similarly there is a corresponding product disc in $M_r\cap \bbb{b}$. Cut $M_r$ along these product discs, and retain the component that meets $r$. The corresponding change in $D$ is to collapse the negative side of $\phi$ to a single crossing, as in Figure \ref{disjointnesspic3}. After so collapsing all flype circles that bound $\Lambda$, we arrive at a diagram $D'$ as described above. By the same reasoning as in the proof of Proposition \ref{disjointnessprop1}, the remaining section of $M_r$ is not a product region, and so $M_r$ is not.

\smallskip

Finally, let $r$ be a section of a white region of $D$ that lies between a red $P$--arc and a blue $P$--arc parallel to it, and let $M_r$ be the component of $M$ that meets it. In this case, we show by contradiction that $M_r$ also meets $\sphere$ in a white region between $P$--arcs in $\Lambda_{-1}\cup\Lambda_1$, and hence it has already been shown to not be a product region. 

Suppose $M_r$ does not meet $\sphere$ in any such white region.  
Let $\lambda$ be the component of $\sphere\setminus\Psi$ containing $r$. Note that every crossing in the interior of $\lambda$ has both a red $P$--arc and a blue $P$--arc across it on the positive side.
Since $D$ is connected, there is a path $\sigma$ from $r$ to a flype circle $\phi$ on the boundary of $\lambda$ that is contained in $D$ and the $P$--arcs. If $\sigma$ passes between an overcrossing and an undercrossing at a crossing $c$ in the interior of $\lambda$, change it to instead switch between the two via the $P$--arcs across the positive side of $c$. After such changes, $\sigma$ gives a path in $\partial M_r$. 

Given that $M_r$ does not meet the white region between the $P$--arcs on the other side of the flype crossing or flype arc, the endpoint of $\sigma$ must be a point of $\partial\overline{\rho}=a\cup b$. 
From this we see that $\lambda$ is coloured blue, 
which means there is a path $\sigma$ in $D_{\lambda'}$ with the $P$--arcs from $r$ to a flype circle of $R_1$, where $\lambda'$ is the component of $\lambda_{-1}$ containing $\lambda$. Again we can make this path run along $P$--arcs rather than moving directly between overcrossings and undercrossings.
Our aim is to show that $\sigma$ gives a path in $\partial M_r$. Then $M_r$ will meet a white region of $D$ between the $P$--arcs in $\Lambda_1$, contradicting our assumption.

We know that $\sigma$ gives a path in $\partial M_r$ until it first reaches a flype circle of $R_{-1}$, as $D$ and $D_{\lambda'}$ are the same in the interior of $\lambda_{-1}$. Without loss of generality, suppose $\sigma$ meets a flype circle $\phi$ of $R_{-1}$ on an overcrossing arc of $D$. 
There it meets a point $a_1\in a$. 
Then $\partial M_r$ contains the arc of $\overline{\rho}$ that runs from $a_1$ to $\overline{a_1}$.
Note that the path $\sigma$ must also run through the point of $D_{\lambda'}$ that corresponds to $\overline{a_1}$. 

In this way, we can see that $\sigma$ defines a path in $\partial M_r$ as required. Therefore $M_r$ is not a product region.
\end{proof}

\section{Non-special links}\label{nonspecialsection}

Theorem 1.2 of \cite{MR1664976} gives two families of links that illustrate the importance of assuming, in Theorem \ref{isomorphismtheorem}, that the link $L$ has a special alternating diagram, and not just an alternating diagram. These families of links are the subjects of the next two propositions.

\begin{proposition}
If $L_{\gamma_n}$ is the link shown in Figure \ref{nonspecialpic1} then $\ms(L_{\gamma_n})$ contains an $(n-1)$--simplex in which exactly one vertex is given by applying Seifert's algorithm to an alternating diagram.
\begin{figure}[htbp]
\centering
\input{pictexfiles/nonspecialpic1}
\caption{\label{nonspecialpic1}}
\end{figure}
\end{proposition}
\begin{proof}
There is only one non-trivial flype possible on the diagram given, and it is not an essential flype. 
Hence only one minimal genus Seifert surface, $R$, for $L_{\gamma_n}$ comes from an alternating diagram.

There is a product disc decomposition of $R$ that removes the plumbed on Hopf band. The resulting surface, $R'$, is given by applying Seifert's algorithm to the special alternating link $L'_{\gamma_n}$ shown in Figure \ref{nonspecialpic2}. By Theorem \ref{isomorphismtheorem} the link of the vertex $R'$ in $\ms(L'_{\gamma_n})$ is an $(n-2)$--simplex, and so the same is true of $R$ in $\ms(L_{\gamma_n})$ by Proposition \ref{samelinkprop}.
\begin{figure}[htbp]
\centering
\input{pictexfiles/nonspecialpic2}
\caption{\label{nonspecialpic2}}
\end{figure}
\end{proof}

\begin{proposition}
If $L_{\delta_n}$ is the link shown in Figure \ref{nonspecialpic3}
then $\ms(L_{\delta_n})$ is as shown in Figure \ref{nonspecialpic4}, where a white circle represents a surface given by applying Seifert's surface to an alternating diagram, and a black circle represents a surface that cannot be so constructed.
\begin{figure}[htbp]
\centering
\input{pictexfiles/nonspecialpic3}
\caption{\label{nonspecialpic3}}
\end{figure}
\begin{figure}[htbp]
\centering
\input{pictexfiles/nonspecialpic4}
\caption{\label{nonspecialpic4}}
\end{figure}
\end{proposition}
\begin{proof}
Again there are no essential flypes of this diagram, so only one minimal genus Seifert surface comes from an alternating diagram.

The surface $R_m$, for $0\leq m\leq n$, is the surface given by applying Seifert's algorithm to the special (but not alternating) diagram $D_m$ shown in Figure \ref{nonspecialpic5}. In Figure \ref{nonspecialpic3} we may see  each of the horizontal twisted bands (which are plumbed on Hopf bands) as lying either above or below the rest of the surface. This does not change the ambient isotopy class of the Seifert surface because the Hopf link is fibred.  From this we can see that $R_0$ is the surface shown in Figure \ref{nonspecialpic3} (see Figure \ref{nonspecialpic6}).
\begin{figure}[htbp]
\centering
\input{pictexfiles/nonspecialpic5}
\caption{\label{nonspecialpic5}}
\end{figure}
\begin{figure}[htbp]
\centering
\input{pictexfiles/nonspecialpic6}
\caption{\label{nonspecialpic6}}
\end{figure}

Consider the diagram $D_0$. We can express $R_1$ on this diagram as a surface in special form on an alternating diagram, with another surface attached in the obvious way as shown in Figure \ref{nonspecialpic7}.
\begin{figure}[htbp]
\centering
\input{pictexfiles/nonspecialpic7}
\caption{\label{nonspecialpic7}}
\end{figure}
Thus we see that $R_1$ is disjoint from $R_0$. We want to show that $R_0\neq R_1$, and that no other minimal genus Seifert surface is disjoint from $R_0$. 
To do this, we show that the link of $R_0$ in $\ms(L_{\delta_n})$ consists of only one vertex, and this vertex is $R_1$. 

The white region marked $*$ in Figure \ref{nonspecialpic7} defines a product disc in the complementary sutured manifold to $R_0$. 
Doing the product disc decomposition gives the same diagram, but with one fewer twisted bands in the right-hand surface. After $n$ such decompositions, we arrive at a surface $R^{(n)}_0$ given by just the alternating section of the diagram. 
By Theorem \ref{isomorphismtheorem} the link of $R^{(n)}_0$ is a single vertex. Hence, by repeated application of Proposition \ref{samelinkprop}, the link of $R_0$ is also a single vertex, as required. By following the constructions in Theorem \ref{isomorphismtheorem} and Proposition \ref{samelinkprop} we find that this single vertex is $R_1$.

We now show, for $1\leq m \leq n-1$, that the link of $R_m$ is $\{R_{m-1},R_{m+1}\}$.
Again we find that these surfaces can be expressed as a surface in special form on an alternating diagram together with a second surface attached in the obvious way, as shown in Figure \ref{nonspecialpic8}a and b respectively.
\begin{figure}[htbp]
\centering
\begin{tabular}{@{}l@{}l@{}}
(a)&
\input{pictexfiles/nonspecialpic8a}\\
(b)&
\input{pictexfiles/nonspecialpic8b}
\end{tabular}
\caption{\label{nonspecialpic8}}
\end{figure}
Also as before, after $n$ product disc decompositions we may apply Theorem \ref{isomorphismtheorem}, giving that the link of $R^{(n)}_{m}$ contains exactly two vertices and no edges.
This shows that the link of $R_m$ is as required.

It remains to consider the surface $R_n$. We now know that $R_{n-1}$ is in the link of $R_n$. Thus we need only show that the link of $R_n$ contains only one element.
Each white bigon region of $D_n$ defines a product disc in the complement of $R_n$. Using $n-1$ such product disc decompositions, removing all but one of the once-twisted bands, and an isotopy, we arrive at the diagram and surface shown in Figure \ref{nonspecialpic9},
\begin{figure}[htbp]
\centering
\input{pictexfiles/nonspecialpic9}
\caption{\label{nonspecialpic9}}
\end{figure}
where the horizontal twisted band lies above the rest of the surface. Thus we are interested in a plumbing of two surfaces, $S_a$ and $S_b$, shown in Figure \ref{nonspecialpic10}. Call the plumbed surface $S_{ab}$.

\begin{figure}[htbp]
\centering
\input{pictexfiles/nonspecialpic10}
\caption{\label{nonspecialpic10}}
\end{figure}

$S_a$ is an annulus with two full twists, bounding a torus link $L_a$.
After $n$ further disc decompositions, removing the twisted bands from the right-hand side of the diagram, $S_b$ is also reduced to an annulus with two full twists. From this we see that both $S_a$ and $S_b$ are unique minimal genus Seifert surfaces for $L_a$ and $L_b=\partial S_b$ respectively, and also that neither link is fibred. 
This is enough to show there is at least one minimal genus Seifert surface for $\partial S_{ab}$ in the link of $S_{ab}$. 
However, we already knew that such a surface exists (the image of $S_{n-1}$ under the product disc decompositions), and we have not yet shown there are no others.

To do this, we consider the arcs $\rho_a$ and $\rho_b$ shown in Figure \ref{nonspecialpic10}. These should be seen as lying in the boundary of the complementary sutured manifolds to the surfaces $S_a,S_b$. Note that $\rho_a,\rho_b$ both lie on the upper side of the surfaces as shown.
By \cite{MR2131376} Proposition 3.4, the proof will be complete if we show that there is no product disc with either $\rho_a$ or $\rho_b$ as part of its boundary. Since $\rho_a$ and $\rho_b$ are essential, any such product disc will be essential. This result is clear for $\rho_a$ as there are no essential product discs in the complement of $S_a$.

As we reduce $S_b$ to an annulus by product disc decompositions, what is the effect on $\rho_b$? Figure \ref{nonspecialpic11} shows the result of the first decomposition.
\begin{figure}[htbp]
\centering
\input{pictexfiles/nonspecialpic11}
\caption{\label{nonspecialpic11}}
\end{figure}
This leaves two copies of the arc $\rho'_b$, the curve in $S'_b$ analogous to $\rho_b$ in $S_b$. Inductively, we are therefore only concerned with the curve $\rho^{(m)}_b$ in $S^{(m)}_b$. The final pair $(S^{(n)}_b,\rho^{(n)}_b)$ is the same as $(S_a,\rho_a)$, so there is no essential product disc with $\rho^{(n)}_b$ contained in its boundary. By $n$ applications of Lemma \ref{productdiscslemma}, there is no product disc $T$ with $\rho_b\subset \partial T$.
\end{proof}

\begin{remark}
These two propositions together complete the proof of \cite{MR1664976} Theorem 1.2.
\end{remark}

\section{The realisation of $\ms(L)$}\label{productssection}

Let $L$ be a prime link with a reduced, special alternating diagram $D$. In this section we turn our attention to Theorem 1.6 of \cite{MR1664976}, which asserts that $\ms(L)$ is homeomorphic to a ball in $\mathbb{R}^n$ for some $n$. The results we will use to prove this refer to simplicial complexes with a partial ordering on the vertices. Thus we will need to give such an order to the vertices of $\kc{D}$. 

Note that the simplices of $\kc{D}$ already have a cyclic ordering on their vertices. We need to break this circle to give a linear order. To do this, pick a region $r$ of $\theta(D)$. We break each circle in the ordering at the region $r$. That is, if $v_1$ is obtained from $v_0$ by adding the region $r$ then we remove the relation $v_0<v_1$. Note that the order this gives to the vertices of a face of a simplex matches that induced by the ordering of the vertices of the whole simplex. This method also orders the edges of each $\theta$--graph in $\theta(D)$.

\begin{definition}[\cite{MR1802191}]
Fix $m,n\in\mathbb{N}$, and let $v_0,\ldots,v_n$ be the (ordered) vertices of an $n$--simplex $[v_0,\ldots,v_n]$.

A \textit{colour scheme} is an $m \times (l+1)$ matrix $\mathbf{X}=(x_{ij})$ for some $l\leq n$ with $x_{ij}\in\{0,\ldots,n\}$. In addition, $\mathbf{X}$ must have pairwise distinct columns, and its entries must satisfy $x_{10}\leq x_{11}\leq \cdots\leq x_{1l}\leq x_{20}\leq\cdots\leq x_{ml}$.
For $k\leq l$, column $k$ of $\mathbf{X}$ defines a point $v'_k$ of $[v_0,\ldots,v_n]$ by $v'_k=(v_{x_{1k}}+\cdots+v_{x_{mk}})/m$. Thus $\mathbf{X}$ defines an $l$--simplex $[v'_0,\ldots,v'_l]$.

The \textit{edgewise subdivision} $\esd_m([v_0,\ldots,v_n])$ of $[v_0,\ldots,v_n]$ is made up of all simplices given by colour schemes.
\end{definition}

\begin{remark}
In \cite{MR1802191} Section 3 it is proved that $\esd_m([v_0,\ldots,v_n])$ is indeed a subdivision of $[v_0,\ldots,v_n]$.
\end{remark}

\begin{proposition}[\cite{MR1664976} Theorem 1.6(1)]\label{onethetagraphprop}
Suppose $\theta(D)$ consists of a single $\theta$--graph with $n+1$ edges and total weight $m$. Then $\kc{D}$ is the edgewise subdivision of an $n$--simplex.
\end{proposition}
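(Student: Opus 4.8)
The plan is to construct an explicit isomorphism $\kc{D}\cong\esd_m([v_0,\dots,v_n])$, where $[v_0,\dots,v_n]$ is the $n$--simplex with its given vertex order, by first matching vertices --- which in both cases are the lattice points of the dilated simplex $m[v_0,\dots,v_n]$ --- and then matching simplices, aligning the cyclic structure of $\kc{D}$ with the linear ``colour scheme'' structure of the edgewise subdivision, using the distinguished region to break the cycles.

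First I would pin down the combinatorics of $\theta(D)$. Label the edges of the single $\theta$--graph $e_0,\dots,e_n$ in cyclic order and let $r_0,\dots,r_n$ be the $n+1$ regions of $\theta(D)$, with $r_i$ the bigon between $e_i$ and $e_{i+1}$ (indices mod $n+1$). Each $r_i$ meets only $e_i$ and $e_{i+1}$, and meets each on a single side; since $r_i^{\theta}$ must preserve the constraint $\sum_j w_j=m$ cutting out $\V(\kc{D})$, we need $|\partial^+r_i|=|\partial^-r_i|=1$, and because the two regions adjacent along an edge lie on opposite sides of it, an induction around the cycle lets me index so that $\partial^-r_i=\{e_i\}$ and $\partial^+r_i=\{e_{i+1}\}$ for every $i$. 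Hence $\V(\kc{D})=\{(w_0,\dots,w_n)\in\mathbb{Z}_{\ge 0}^{n+1}:\sum_j w_j=m\}$, which is exactly the set of lattice points of $m[v_0,\dots,v_n]$ and also the vertex set of $\esd_m([v_0,\dots,v_n])$ (a colour scheme column being the multiset containing $j$ with multiplicity $w_j$); call $\beta$ the tautological bijection between them. In these coordinates $r_i^{\theta}$ is ``move one unit of weight from $e_i$ to $e_{i+1}$'', i.e.\ $w\mapsto w-\mathbf e_i+\mathbf e_{i+1}$, defined precisely when $w_i>0$.

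Next, both complexes are pure of dimension $n$ (for $\kc{D}$ this is the fact from Section~\ref{hssection} that every simplex lies in an $n$--simplex, with the ``$m-1$'' there equal to $n$ here; for $\esd_m([v_0,\dots,v_n])$ it is a simplicial subdivision of a ball), and in each complex any subset of the vertices of an $n$--simplex is again a simplex, so it suffices to show $\beta$ induces a bijection between the $n$--simplices. An $n$--simplex of $\kc{D}$ is the vertex set of a full cycle $p_0\to\cdots\to p_{n+1}=p_0$ using each region once; exactly one of these $n+1$ moves is the ``wrap'' move $r_n$ ($w\mapsto w-\mathbf e_n+\mathbf e_0$), and since legality of the moves around a closed loop is rotation--invariant, I may take $r_n$ to be the last move. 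Then $p_0\to\cdots\to p_n$ uses $r_0,\dots,r_{n-1}$ in some order, each pushing a token one step to the right along the line $0<1<\cdots<n$; the potential $\phi(w)=\sum_j jw_j$ strictly increases at each such move, so the $p_j$ are distinct, and the data ``a lattice point $p_0$ of $m[v_0,\dots,v_n]$ together with a legal ordering of the $n$ wall--crossings $j\rightsquigarrow j{+}1$'' is precisely a top--dimensional cell of $\esd_m([v_0,\dots,v_n])$, presented by the colour scheme whose $k$--th column is $\beta(p_k)$. Conversely every top cell of $\esd_m([v_0,\dots,v_n])$ is such a path, and the one thing to check in order to run the cycle backwards is that the omitted move $r_n$ is legal at $p_n$, i.e.\ $(p_n)_n>0$: but $r_{n-1}$ is used somewhere along the path, depositing a token on site $n$, and no move $r_0,\dots,r_{n-1}$ can remove a token from site $n$, so indeed $(p_n)_n>0$. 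Finally, if the cyclic order of $\kc{D}$ is broken at the region $r_n$ (chosen as the distinguished region), the induced linear order on each simplex's vertices is exactly $p_0<p_1<\cdots<p_n$, matching the order of the colour scheme's columns, so $\beta$ is an order--preserving simplicial isomorphism.

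The hard part will be the assertion used above that the maximal cells of $\esd_m([v_0,\dots,v_n])$ are exactly these ``legal monotone lattice paths'': one must unwind the colour--scheme conditions of Section~\ref{productssection} to see that in a top colour scheme two consecutive columns differ by moving a single token one step (so that the associated weight--vector moves are the $n$ distinct walls $r_0,\dots,r_{n-1}$, in some order), and conversely that every legal such path can be arranged into a colour scheme; I would either carry out this bookkeeping directly or extract it from the structural results on edgewise subdivision in \cite{MR1802191}. Once that combinatorial identification, together with the determination of $\partial^{\pm}r_i$ in the first step, is in place, the remainder is routine.
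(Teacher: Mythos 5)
Your proposal is correct and takes essentially the same route as the paper: the tautological bijection between weight vectors and colour-scheme columns on vertices, followed by matching $n$--simplices of $\kc{D}$ (cycles of region-additions, with the distinguished region placed last to break the cyclic order) with colour schemes of $\esd_m([v_0,\ldots,v_n])$. The ``hard part'' you defer --- checking that consecutive columns of a top-dimensional colour scheme differ by moving a single token one step along $0<1<\cdots<n$, and conversely that every such legal chain assembles into a colour scheme --- is precisely the bookkeeping that constitutes the bulk of the paper's proof, and it goes through exactly as you anticipate (including your observation that $(p_n)_n>0$, which appears in the paper as the condition $x_{mn}=n$).
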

\begin{proof}
Fix a region $r_0$ of $\theta(D)$ to give an ordering on the vertices of $\kc{D}$ as above (note that in this case $\tilde{\theta}(D)$ is $\theta(D)$).
Label the edges of $\theta(D)$ as $e_0,\ldots,e_n$ where $e_0<e_1<\cdots<e_n$. Label the other regions of $\theta(D)$ in order around the $\theta$--graph in the positive direction, so that $\partial^-r_i=e_{i-1}$ and $\partial^+r_i=e_i$.

Let $v_0,\ldots,v_n$ be the (ordered) vertices of an $n$--simplex $[v_0,\ldots,v_n]$.
Given a vertex $u=(w_0,\ldots,w_n)$ of $\kc{D}$, we may define an $m\times 1$ matrix $\mathbf{X}=(x_{ij})$ by taking $x_{i0}=k$ where $\sum_{j< k}w_j<i\leq \sum_{j\leq k}w_j$.  
That is, the number of times $k$ occurs in $\mathbf{X}$ is the weight that $u$ assigns to $e_k$, and the entries of $\mathbf{X}$ are arranged so as to be non-decreasing.
The matrix $\mathbf{X}$ is then a colour scheme, and so defines a vertex of $\esd_m([v_0,\ldots,v_n])$. Thus there is a map $B$ from the vertices of $\kc{D}$ to the vertices of $\esd_m([v_0,\ldots,v_n])$. It is clear that $B$ is a bijection. We will show that $B$ induces a bijection on the $n$--simplices of the two complexes. This will complete the proof as in each of the complexes every simplex is a face of an $n$--simplex.

Let $u_0,\ldots,u_n$ be the vertices of an $n$--simplex in $\kc{D}$ with $u_0<u_1<\cdots<u_n$. Then there is a permutation $\pi\colon\{1,\ldots,n\}\to\{1,\ldots,n\}$ such that $u_i$ is obtained from $u_{i-1}$ by adding the region $r_{\pi(i)}$. 
Let $\mathbf{X}$ be an $m\times (n+1)$ matrix, where column $k$ of $\mathbf{X}$ is the column vector defined by $u_k$ in the construction of the map $B$. The column vectors from $u_k$ and $u_{k+1}$ differ exactly in one coordinate, which changes from $\pi(k)-1$ to $\pi(k)$. 
Because $u_0,\ldots,u_n$ is an $n$--simplex, we also know that $u_0$ is obtained from $u_n$ by adding $r_0$. This has the effect of dropping the last coordinate of the column vector, moving the remaining entries down one place, and inserting a 0 in the top coordinate.
Hence $\mathbf{X}$ is a colour scheme defining an $n$--simplex of $\esd_m([v_0,\ldots,v_n])$.

Conversely, choose an $n$--simplex of $\esd_m([v_0,\ldots,v_n])$ and let $\mathbf{X}$ be the colour scheme defining it. Let $u_k$ be the vertex of $\kc{D}$ given by column $k$ of $\mathbf{X}$.  That is, $u_k$ assigns a weight of 1 to $e_i$ for each time $i$ occurs in the column vector. As the columns of $\mathbf{X}$ are pairwise distinct, at least one element changes between $u_k$ and $u_{k+1}$. The ordering of the values of the elements of $\mathbf{X}$ ensure this is an increase in each case, and that the sum of the sizes of these increases is at most $n$. Thus exactly one coordinate increases between $u_k$ and $u_{k+1}$ and this coordinate increases by 1. Moreover, for each $k\leq n$, exactly one of these changes is from $k-1$ to $k$, and $x_{(k-1)n}=x_{k0}$ for $1<k\leq m$ while $x_{10}=0$ and $x_{mn}=n$. Define a permutation $\pi\colon\{1,\ldots,n\}\to\{1,\ldots,n\}$ by $\pi(j)=k$ if the move from $u_{j-1}$ to $u_j$ sees a coordinate change from $k-1$ to $k$. Then $u_{j}$ is obtained from $u_{j-1}$ by adding the region $r_{\pi(j)}$ for $j\leq n$ and adding $r_0$ to $u_n$ gives $u_0$. This shows that $u_0,\ldots,u_n$ span an $n$--simplex in $\kc{D}$.

Each of these two maps is an injection, as simplices are defined by their vertices. As there are only finitely many $n$--simplices in either complex, both maps are therefore bijections.
\end{proof}

\begin{definition}[\cite{MR0050886} Chapter II Definition 8.7]
A simplicial complex $\mathcal{X}$ is \textit{ordered} if there is a binary relation $\leq$ on the vertices of $\mathcal{X}$ with the following properties.
\begin{itemize}
 \item[(P1)] $(u \leq v \textrm{ and } v\leq u)\Rightarrow u=v$.
 \item[(P2)] If $u,v$ are distinct, $(u \leq v \textrm{ or } v\leq u) \Leftrightarrow \textrm{ $u$ and $v$ are adjacent}$.
 \item[(P3)] If $u,v,w$ are vertices of a 2--simplex then $(u \leq v \textrm{ and } v\leq w)\Rightarrow u\leq w$.
\end{itemize}
\end{definition}

\begin{remark}
It is clear that in searching for such a relation we may use the following weaker version of (P2).
\begin{itemize}
 \item[(P2)$'$] If $u,v$ are adjacent then $(u\leq v \textrm{ or } v\leq u)$. 
\end{itemize}
To see this, note that we can remove all relationships between non-adjacent vertices.
\end{remark}

\begin{definition}[\cite{MR0050886} Chapter II Definition 8.8]
Let $\mathcal{X}_1,\mathcal{X}_2$ be ordered simplicial complexes. We define the simplicial complex $\mathcal{X}_1\times\mathcal{X}_2$. Its vertices are given by the set $\V(\mathcal{X}_1)\times\V(\mathcal{X}_2)$. Vertices $(u_0,v_0),\ldots,(u_n,v_n)$ span an $n$--simplex if the following hold.
\begin{itemize}
 \item $\{u_0,\ldots,u_n\}$ is an $m$--simplex of $\mathcal{X}_1$ for some $m\leq n$.
 \item $\{v_0,\ldots,v_n\}$ is an $m$--simplex of $\mathcal{X}_2$ for some $m\leq n$.
 \item The relation defined by $(u,v)\leq (u',v')\Leftrightarrow(u\leq u' \textrm{ and } v\leq v')$ gives a total linear order on $(u_0,v_0),\ldots,(u_n,v_n)$.
\end{itemize}
\end{definition}

\begin{remark}
The projection maps on the vertices extend to simplicial maps of the complexes.
\end{remark}

\begin{theorem}[\cite{MR0050886} Chapter II Lemma 8.9]\label{productcomplexthm}
The map $|\mathcal{X}_1\times\mathcal{X}_2|\to|\mathcal{X}_1|\times|\mathcal{X}_2|$ induced by projection is a homeomorphism.
\end{theorem}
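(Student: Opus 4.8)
The statement is classical; it is proved in \cite{MR0050886}, and the plan is to follow that line of argument. The map in question sends a point $z\in|\mathcal{X}_1\times\mathcal{X}_2|$ to $(|p_1|(z),|p_2|(z))$, where $|p_1|,|p_2|$ are the realizations of the two (simplicial) projection maps. I would prove it is a homeomorphism in three steps: first reduce to the case in which $\mathcal{X}_1$ and $\mathcal{X}_2$ are single ordered simplices; then, for two simplices, recognise $\mathcal{X}_1\times\mathcal{X}_2$ as the order complex of a product of two finite chains and show its realization is the standard ``staircase'' triangulation of the product of the two simplices; and finally glue the resulting local homeomorphisms.

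For the reduction: if $\{(u_0,v_0),\dots,(u_n,v_n)\}$ is a simplex of $\mathcal{X}_1\times\mathcal{X}_2$, then $\sigma:=\{u_0,\dots,u_n\}$ is a face of a simplex of $\mathcal{X}_1$ and $\tau:=\{v_0,\dots,v_n\}$ a face of a simplex of $\mathcal{X}_2$, and the given simplex lies in the subcomplex $\sigma\times\tau$ — a chain of pairs in $\sigma\times\tau$ remains a chain under deletion of vertices, so $\sigma\times\tau$ is closed under faces and is itself the product of the ordered simplices $\sigma$ and $\tau$. Consequently $|\mathcal{X}_1\times\mathcal{X}_2|=\bigcup_{\sigma,\tau}|\sigma\times\tau|$ and $|\mathcal{X}_1|\times|\mathcal{X}_2|=\bigcup_{\sigma,\tau}|\sigma|\times|\tau|$, the projection carries $|\sigma\times\tau|$ onto $|\sigma|\times|\tau|$, and these covers are compatible with inclusions of faces. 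Thus a family of homeomorphisms $|\sigma\times\tau|\to|\sigma|\times|\tau|$ agreeing on overlaps patches together to the map we want; and since in the situation of this paper all complexes are finite, so that $|\mathcal{X}_1\times\mathcal{X}_2|$ and $|\mathcal{X}_1|\times|\mathcal{X}_2|$ are compact Hausdorff, it is enough to exhibit a continuous bijection.

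The heart of the matter is therefore the case $\mathcal{X}_1=\Delta^p$, $\mathcal{X}_2=\Delta^q$ with vertex orders $0<\dots<p$ and $0<\dots<q$. Here a simplex of $\Delta^p\times\Delta^q$ is precisely a set of distinct pairs in $\{0,\dots,p\}\times\{0,\dots,q\}$ totally ordered by the coordinatewise order, i.e.\ a chain in that poset, so $\Delta^p\times\Delta^q$ is the order complex of $\{0,\dots,p\}\times\{0,\dots,q\}$. I would then prove the point-level statement that the projection is a bijection $|\Delta^p\times\Delta^q|\to|\Delta^p|\times|\Delta^q|$: a point of the left side is a system of nonnegative weights $(t_{(i,j)})$ on a chain with $\sum t_{(i,j)}=1$, projecting to the barycentric tuples $x_i=\sum_j t_{(i,j)}$ and $y_j=\sum_i t_{(i,j)}$, so one must show that every pair $(x,y)$ admits exactly one weighting with marginals $x,y$ supported on a chain. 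Existence is the monotone (comonotone) coupling: with $F(i)=\sum_{i'\le i}x_{i'}$ and $G(j)=\sum_{j'\le j}y_{j'}$ (and $F(-1)=G(-1)=0$), assign to $(i,j)$ the length of $[F(i-1),F(i))\cap[G(j-1),G(j))$; the marginals come out right and any two pairs of positive weight are comparable. Uniqueness is a greedy reconstruction: reading the first marginal forces the partial sums $t_0+\dots+t_k$ to equal $F(i_k)$ at the steps where the first coordinate increases, reading the second forces them to equal $G(j_k)$ where the second increases, and since some coordinate increases at every step this pins down all partial sums, hence all weights and the chain. The resulting bijection is continuous (affine on each closed simplex) with continuous inverse (affine on each staircase cell, or by compactness), and gluing these over all $(\sigma,\tau)$ as above completes the proof.

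The one genuinely substantive ingredient is this existence-and-uniqueness of the monotone coupling — equivalently, that the order complex of $\{0,\dots,p\}\times\{0,\dots,q\}$ triangulates $|\Delta^p|\times|\Delta^q|$ via the two projections. The reduction to simplices and the patching of local homeomorphisms are routine bookkeeping, and the potential subtlety about whether a product of simplicial complexes carries the right topology never arises here because the complexes in play are finite, so compactness does the work.
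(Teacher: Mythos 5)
There is nothing in the paper to compare against here: the statement is quoted directly from Eilenberg--Steenrod (\cite{MR0050886}, Chapter II, Lemma 8.9) and no proof is given. Your argument is a correct, self-contained rendition of the standard proof of this classical fact: reduce to a pair of ordered simplices (legitimate, since the order restricted to the vertices of any simplex is total, so $\sigma\times\tau$ is a subcomplex and the covers are compatible with face inclusions), identify $\Delta^p\times\Delta^q$ with the order complex of the poset $\{0,\dots,p\}\times\{0,\dots,q\}$, and show the two barycentric projections give a bijection onto $|\Delta^p|\times|\Delta^q|$ via the monotone coupling. The one step I would tighten is uniqueness: the greedy reconstruction of the partial sums needs a word about ties $F(i)=G(j)$, where the chain must take the diagonal step because a vertex of zero weight cannot lie in the support of the point; alternatively, uniqueness follows more cleanly by noting that for any chain-supported coupling each weight is at most the length of $[F(i-1),F(i))\cap[G(j-1),G(j))$, so that summing to $1$ forces equality, which determines both the support and the weights at once. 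Your appeal to compactness for continuity of the inverse is fine in this setting, since the complexes to which the theorem is applied (such as $\kc{\theta_1}$ and $\kc{\theta_2}$) are finite; this also sidesteps the usual product-topology caveat that arises for infinite complexes.
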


\begin{remark}
With the ordering on the vertices defined above, $\kc{D}$ is an ordered simplicial complex.
\end{remark}

\begin{theorem}[\cite{MR1664976} Theorem 1.6(2)]
The realisation $|\kc{D}|$ of $\kc{D}$ is homeomorphic to a ball of dimension $\sum{(n^{\theta}(e^{\theta})-1)}$, where the sum is taken over all $\theta$--graphs $e^{\theta}$ in $\theta(D)$ and $n^{\theta}(e^{\theta})$ is the number of edges in $e^{\theta}$.
\end{theorem}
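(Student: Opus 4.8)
The plan is to reduce to the single–$\theta$–graph case settled by Proposition~\ref{onethetagraphprop}, by exhibiting $\kc{D}$ as a product of ordered simplicial complexes — one factor for each $\theta$–graph of $\theta(D)$ — and then to invoke Theorem~\ref{productcomplexthm} together with the elementary fact that a finite product of closed balls is a closed ball of dimension the sum of the factor dimensions. Recall that $\kc{D}$ depends only on $\theta(D)$.

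First I would fix notation: let $e_1^\theta,\ldots,e_k^\theta$ be the $\theta$–graphs of $\theta(D)$, with $p_i=n^\theta(e_i^\theta)$ edges and total weight $m_i=w^\theta(e_i^\theta)$. For each $i$, let $\mathcal{X}_i$ be the ordered simplicial complex built by the same recipe as $\kc{D}$ but using only the edges of $e_i^\theta$ and only the regions of the standalone $\theta$–graph $e_i^\theta$; this is $\kc{D'}$ for a special alternating diagram $D'$ with $\theta(D')=e_i^\theta$, so re-running the argument of Proposition~\ref{onethetagraphprop} gives $\mathcal{X}_i\cong\esd_{m_i}(\Delta^{p_i-1})$. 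By the Remark following the definition of $\esd$ this is the realisation of a subdivision of the $(p_i-1)$–simplex, hence $|\mathcal{X}_i|$ is homeomorphic to a ball of dimension $p_i-1=n^\theta(e_i^\theta)-1$. The ordering on $\mathcal{X}_i$ here is the one coming from a chosen region of $e_i^\theta$, exactly as in Proposition~\ref{onethetagraphprop}.

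The central step is to prove
\[
\kc{D}\ \cong\ \mathcal{X}_1\times\cdots\times\mathcal{X}_k
\]
as ordered simplicial complexes. On vertices this is immediate: the defining relation $\sum_{\{j:e_j\in e_i^\theta\}}w_j=w^\theta(e_i^\theta)$ involves, for each $i$, only the coordinates belonging to $e_i^\theta$, so $\V(\kc{D})$ is literally the product of the $\V(\mathcal{X}_i)$, and this matches the vertex set of the product complex. The real work is twofold. First, one must check that the linear order on $\V(\kc{D})$ induced by the chosen region $r_0$ of $\theta(D)$ agrees with the product order $(u^i)\le(v^i)\iff u^i\le_i v^i$ for all $i$: here one uses that $r_0$ simultaneously determines the tie–breaking regions of each $e_i^\theta$, together with the observation that the cyclic order on the vertices of any simplex of $\kc{D}$, restricted to any one $\theta$–graph, is precisely the cyclic order it inherits from $e_i^\theta$. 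Second, one must match the higher simplices. The key point is that adding a region $r$ of $\theta(D)$ to a vertex changes, within each $\theta$–graph, either nothing at all or exactly one coordinate up and one coordinate down; thus each region of $\theta(D)$ acts on each factor $\mathcal{X}_i$ either trivially or as the addition of a region of the standalone $e_i^\theta$. One then shows that cycling once through all the regions of $\theta(D)$ corresponds exactly to cycling once through the regions of each $e_i^\theta$, the various cycles being interleaved so as to respect the product order, and conversely that every such interleaving is realised by a labelling of the regions of $\theta(D)$ (with the $r^\theta$–compositions defined on one side precisely when the product condition is met). This is the same kind of flow/cycle bookkeeping on the dual graph that drives the proofs of Proposition~\ref{kdmslprop2} and Lemma~\ref{flaglemma}; the subtlety peculiar to $k\ge 2$ is that a single region of $\tilde\theta(D)$ may meet edges of several $\theta$–graphs at once (when the $\theta$–graphs are joined at vertices), which is exactly why the cycles in the different factors interleave rather than run consecutively. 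I expect this simplex–matching — keeping straight which region of $\theta(D)$ induces which move in which factor, and the defined/undefined bookkeeping — to be the main obstacle.

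Granting the product decomposition, the theorem follows quickly: applying Theorem~\ref{productcomplexthm} $k-1$ times gives $|\kc{D}|\cong|\mathcal{X}_1|\times\cdots\times|\mathcal{X}_k|$; each factor is a ball of dimension $n^\theta(e_i^\theta)-1$; and a finite product of closed balls is a closed ball whose dimension is the sum of the factor dimensions, namely $\sum\bigl(n^\theta(e^\theta)-1\bigr)$, the sum running over all $\theta$–graphs in $\theta(D)$.
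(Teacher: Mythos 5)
Your overall strategy (reduce to Proposition \ref{onethetagraphprop} via a product decomposition and Theorem \ref{productcomplexthm}) is the right one, but the central claim $\kc{D}\cong\mathcal{X}_1\times\cdots\times\mathcal{X}_k$ is false in general, and the difficulty you flag in passing is exactly what kills it. Take $\theta(D)$ to be three nested $\theta$--graphs, each with two edges and total weight $1$ (three concentric circles). Then $\sphere\setminus\tilde{\theta}(D)$ has four regions $A$ (innermost disc), $B_1$, $B_2$ (the two annuli) and $C$ (outer disc); adding $B_1$ changes the weights on circles $2$ and $3$ simultaneously, and adding $B_2$ changes circles $1$ and $2$ simultaneously. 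Writing a vertex as $(x_1,x_2,x_3)$ with $x_i$ the weight on the inward-oriented edge of circle $i$, one checks that the cycles $A,B_1,B_2,C$ from $(1,1,1)$ and $C,B_2,B_1,A$ from $(0,0,0)$ give two maximal simplices $T_1=\{(1,1,1),(1,1,0),(1,0,1),(0,1,1)\}$ and $T_2=\{(0,0,0),(1,0,0),(0,1,0),(0,0,1)\}$ of $\kc{D}$, which are \emph{disjoint}. In the Eilenberg--Steenrod product of three ordered copies of $\Delta^1$, every maximal simplex is a maximal chain in the product order and hence contains both the minimum and the maximum vertex, so no two maximal simplices are disjoint, whatever orderings you put on the factors. (Equivalently: in the step $(1,1,0)\to(1,0,1)$ of $T_1$ the second factor increases while the third decreases, or vice versa, for every choice of factor orderings, so $T_1$ is not even totally ordered in any product order.) So the $k$--fold simplicial product simply is not $\kc{D}$; the "interleaving" you describe cannot be arranged, because a shared region forces two factors to move at the same instant, whereas a maximal simplex of the product forces them to move at distinct instants.

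The fix is the paper's: decompose two factors at a time, and choose the splitting so that only one region is shared. One picks a single region $r$ of $\theta(D)$ meeting at least two $\theta$--graphs and a simple closed curve $\rho\subset r$ separating the $\theta$--graphs into two groups $\theta_1,\theta_2$; since any region meeting both groups must cross $\rho$ and hence equal $r$, the region $r$ is the \emph{unique} region touching both sides. Breaking all three cyclic orders at this same $r$ then guarantees that every intermediate step of a maximal simplex of $\kc{\theta_0}$ advances exactly one of the two factors (the only simultaneous move is the final addition of $r$, which closes both cycles and never appears as an edge of the ordered simplex), and the two-factor simplex matching goes through. One then inducts: at the next stage the splitting region and the orderings are re-chosen inside each factor (in the nested example, $B_2$ splits off circle $1$, and then $B_1$ splits circle $2$ from circle $3$), so no global simplicial product structure survives -- only the homeomorphisms $|\kc{\theta_0}|\cong|\kc{\theta_1}|\times|\kc{\theta_2}|$ of realisations compose, which is all the theorem needs. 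Your treatment of the factors themselves (each $|\mathcal{X}_i|\cong|\esd_{m_i}(\Delta^{p_i-1})|$ is a ball of dimension $p_i-1$) and the final dimension count are fine; it is only the one-shot product decomposition that has to be replaced by this iterated binary splitting.
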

\begin{proof}
We proceed by induction on the number of $\theta$--graphs in $\theta(D)$. 
If there are no $\theta$--graphs then $\kc{D}$ is a single vertex.
The case of one $\theta$--graph is covered by Proposition \ref{onethetagraphprop}.

The construction of the 
simplicial complex $\kc{D}$ is dependent only on $\tilde{\theta}(D)$ together with a choice of positive direction and total weight on each $\theta$--graph. Let $\Theta$ be the set of such graphs. That is, each element of $\Theta$ is a finite collection of disjoint $\theta$--graphs in $\sphere$, with a choice of positive direction and total weight on each. Then for every such graph $\theta$ we can construct a simplicial complex $\kc{\theta}$. It is on this set of complexes that we will induct. Note that the base cases hold in this more general setting.

Now suppose that the result holds for all elements of $\Theta$ with at most $(m-1)$ $\theta$--graphs. Let $\theta_0$ be one with $m$ $\theta$--graphs. Choose a region $r$ of $\sphere\setminus\theta_0$ that meets at least two $\theta$--graphs, and pick a simple closed curve $\rho$ contained in $r$ that separates the $\theta$--graphs of $\theta_0$. This gives two new elements of $\Theta$, each with at most $(m-1)$ $\theta$--graphs. Call these $\theta_1,\theta_2$. Use the region $r$ to order the vertices of $\kc{\theta},\kc{\theta_1},\kc{\theta_2}$ as above. Then, by Theorem \ref{productcomplexthm}, 
$|\kc{\theta_1}\times\kc{\theta_2}|\cong|\kc{\theta_1}|\times|\kc{\theta_2}|$. The inductive hypothesis gives that $|\kc{\theta_1}|,|\kc{\theta_2}|$ are balls of the relevant dimension, and so the result holds for $|\kc{\theta_1}\times\kc{\theta_2}|$.

It remains to show that $\kc{\theta_1}\times\kc{\theta_2}$ is $\kc{\theta_0}$. Clearly the obvious map on the vertices is a bijection. Again we will check that it induces a bijection on the top-dimensional simplices.

Consider an ordered $n$--simplex $[w_0,\ldots,w_n]$ in $\kc{\theta_0}$. Then $w_0$ assigns a weight to each edge of $\theta_0$. There is an ordering of the regions of $\sphere\setminus\theta_0$ with $r$ last such that the vertices of $[w_0,\ldots,w_n]$ are given by adding these regions in turn in order. This induces similar orderings of the regions of $\sphere\setminus\theta_i$.
For $0\leq i\leq n$, write $w_i=(u_i,v_i)$, where $u_i$ gives the weights on the edges of $\theta_1$ and $v_i$ gives the weights on the edges in $\theta_2$. If $i<j$ then $u_i\leq u_j$ and $v_i\leq v_j$. Thus $[(u_0,v_0),\ldots,(u_n,v_n)]$ is a simplex in $\kc{\theta_1}\times\kc{\theta_2}$.

Conversely, consider an $n$--simplex $[(u_0,v_0),\ldots,(u_n,v_n)]$ in $\kc{\theta_1}\times\kc{\theta_2}$, where the $u_i$ are vertices of $\kc{\theta_1}$, the $v_i$ are vertices of $\kc{\theta_2}$ and $(u_0,v_0)<(u_1,v_1)<\cdots<(u_n,v_n)$. 
For $0<i\leq n$, as $u_{i-1}\leq u_i$, either $u_{i-1}=u_i$ or $u_i$ is given by adding some regions of $\sphere\setminus\theta_1$ other than $r$ to $u_{i-1}$. Similarly, either $v_{i-1}=v_i$ or $v_i$ is given by adding some regions of $\sphere\setminus\theta_2$ other than $r$ to $v_{i-1}$.
Since there are only $n$ regions of $\sphere\setminus\theta_0$ other than $r$, for $0< i\leq n$ either $u_i=u_{i-1}$ and $v_i$ is given by adding a single region to $v_{i-1}$ or instead $v_i=v_{i-1}$ and $u_i$ is given by adding a single region to $u_{i-1}$. Because $[u_0,\ldots,u_n]$ is a simplex in $\kc{\theta_1}$, adding $r$ to $u_n$ gives $u_0$. Likewise, adding $r$ to $v_n$ gives $v_0$. 
Thus there is an ordering on the regions of $\sphere\setminus \theta_0$ with $r$ last that shows that $[(u_0,v_0),\ldots,(u_n,v_n)]$ is an $n$--simplex in $\kc{\theta_0}$.
\end{proof}


\appendix
\section{Proof by picture: a worked example of Proposition \ref{disjointnessprop2}}

Let $L_{\varepsilon}$ be the link with minimal genus Seifert surfaces $R_{-1}$ and $R_1$ given in admissible special form in Figure \ref{disjointnessegpic1}a and b respectively.
\begin{figure}[htbp]
\centering
\begin{tabular}{@{}l@{}l@{}}
(a)&
\input{pictexfiles/disjointnessegpic1a}\\
(b)&
\input{pictexfiles/disjointnessegpic1b}\\
(c)&
\input{pictexfiles/disjointnessegpic1c}
\end{tabular}
\caption{\label{disjointnessegpic1}}
\end{figure}
We show the steps of the proof of Proposition \ref{disjointnessprop2} in this case, as follows.
\begin{itemize}
  \item[i.] Construct a minimal set of flype circles. Perform flypes to remove $\Lambda_{-1}\cap\Lambda_1$. Change flype circles to equivalent ones where needed. This gives Figure \ref{disjointnessegpic1}c.
  \item[ii.] For each region $\lambda$ of $\lambda_{-1}$, construct the diagram $D_{\lambda}$ and, if it has more than one component, choose a curve $\psi$ around each component. The two new diagrams are shown in Figure \ref{disjointnessegpic2}a and b.
  \item[iii.] Colour the components of $\sphere\setminus\Psi$. Define $a$ and $b$, and the corresponding involutions. See Figure \ref{disjointnessegpic2}c (the pairs of points in $a$ are marked with stars).
  \item[iv.] Position the $P$--arcs and connect them. Figure \ref{disjointnessegpic3} shows the resulting picture in $\bbb{a}$.
\end{itemize}
\begin{figure}[htbp]
\centering
\begin{tabular}{@{}l@{}c@{}}
(a)&
\input{pictexfiles/disjointnessegpic2a}\\
(b)&
\input{pictexfiles/disjointnessegpic2b}\\
(c)&
\input{pictexfiles/disjointnessegpic2c}
\end{tabular}
\caption{\label{disjointnessegpic2}}
\end{figure}
\begin{figure}[htbp]
\centering
\input{pictexfiles/disjointnessegpic3}
\caption{\label{disjointnessegpic3}} 
\end{figure}
\label{firstappendix}


\bibliography{hsreferences}
\bibliographystyle{hplain}


\bigskip

\noindent
Mathematical Institute

\nopagebreak\noindent
University of Oxford

\nopagebreak\noindent
24--29 St Giles'

\nopagebreak\noindent
Oxford OX1 3LB

\nopagebreak\noindent
England

\nopagebreak\smallskip
\nopagebreak\noindent
\textit{jessica.banks[at]lmh.oxon.org}

\end{document}